\newtheorem{theorem}{Theorem}[section]
\newtheorem{lemma}[theorem]{Lemma}
\newtheorem{proposition}[theorem]{Proposition}
\theoremstyle{definition}
\newtheorem{definition}[theorem]{Definition}
\numberwithin{equation}{section}
\begin{document}
	\setcounter{page}{1}	
	
	\vspace*{2.0cm}
	\title[Generalized noncooperative Schr\"{o}dinger-Kirchhoff systems in $\mathbb{R}^N$ ]
	{Generalized noncooperative Schr\"{o}dinger-Kirchhoff-type systems in $\mathbb{R}^N$}
	\author[N. Chems Eddine and D. D. Repov\v{s}]{Nabil CHEMS EDDINE $^{1}$ and
	 Du\v{s}an D. Repov\v{s} $^{2,3,4*}$}
	\maketitle
	\vspace*{-0.6cm}	
	
	\begin{center}
		{\footnotesize
		$^1$Laboratory of Mathematical Analysis and Applications, Department of Mathematics, Faculty of Sciences, Mohammed V University, P.O. Box 1014, Rabat, Morocco.\\
		$^2$Faculty of Education, University of Ljubljana, 1000 Ljubljana, Slovenia.\\
			$^3$Faculty of Mathematics and Physics, University of Ljubljana, 1000 Ljubljana, Slovenia.\\
			$^4$ Institute of Mathematics, Physics and Mechanics, 1000 Ljubljana, Slovenia.
			
	}\end{center}	
	
	\vskip 4mm {\footnotesize \noindent {\bf Abstract.}		
		
	  We consider a class of noncooperative Schr\"{o}dinger-Kirchhoff type system which involves a general variable exponent elliptic operator with critical growth. Under certain suitable conditions on the nonlinearities, we establish the existence of infinitely many solutions for the problem by using the limit index theory, 
	  a version of concentration-compactness principle for weighted-variable exponents Sobolev spaces
	   and  the principle of symmetric criticality of Krawcewicz and Marzantowicz.\\		
		
		\noindent {\it Keywords.}
		Weighted exponent spaces, critical Sobolev exponents,  Schr\"{o}dinger-Kirchhoff type problems, $p$-Laplacian,  $p(x)$-Laplacian, generalized capillary operator,  concentration-compactness principle, Palais–Smale condition, Limit index theory, critical points theory.\\
		
		\noindent {\it Math. Subj. Classif.}
		35B20, 35B33, 35D30, 35J50, 35J60, 46E35.}	
	
\renewcommand{\thefootnote}{}
\footnotetext{$^*$Corresponding author: Du\v{s}an D. Repov\v{s}
	\par
	E-mail addresses: nab.chemseddine@gmail.com (N. Chems Eddine), 
	dusan.repovs@guest.arnes.si (D. D. Repov\v{s})}
	
	\section{Introduction}
	
		 The purpose of this paper is to investigate the multiplicity of solutions for  the noncooperative Schr\"{o}dinger-Kirchhoff-type systems involving a general variable exponent elliptic operator and critical nonlinearity in $\mathbb{R}^N$: 
	
	\begin{eqnarray}
		\label{s1.1}
		\begin{cases}
			K\Big(\mathscr{B}(u)\Big)\Big(\textrm{div}\,\Big(  \mathscr{A}_1(\nabla u)\Big)-b(x)\mathcal{A}_2(u) \Big)=|u|^{r (x)-2}u+\uplambda(x) \frac{ \partial \mathcal{F}}{\partial u}(x,u,v) & \text{in } \mathbb{R}^N, \\
			K\Big(\mathcal{B}(v)\Big)\Big(-\textrm{div}\,\Big(  \mathcal{A}_1(\nabla v)\Big)+b(x)\mathcal{A}_2(v)\Big) =|v|^{r (x)-2}v+\uplambda(x
			) \frac{ \partial \mathcal{F}}{\partial v}(x,u,v) & \text{in } \mathbb{R}^N,\\
		~~~~~~~~u,v \in W^{1,p(x)}_{b}(\mathbb{R}^N)\cap W^{1,h(x)}_{b}(\mathbb{R}^N),
		\end{cases}
	\end{eqnarray}	
	where $N\geq 2$, $\uplambda$ is a
	continuous, radially symmetric function on $\mathbb{R}^N$,  $b\in L^{\infty}(\mathbb{R}^N)$ satisfies
	 $b_0 := \rm{ess} \inf_{} \{b(x) : x\in \mathbb{R}^N\} > 0$, $\nabla \mathcal{F}=( \frac{ \partial \mathcal{F}}{\partial u},\frac{ \partial \mathcal{F}}{\partial v})$ is the gradient of a $C^1$ function $\mathcal{F}: \mathbb{R}^N\times\mathbb{R}^2\to \mathbb{R}^+$, the functions $p$ and $q$ are log-Holder continuous, radially symmetric on $\mathbb{R}^N$, and satisfy the following inequality
	\begin{gather}
		1 <p^{-}\leq p(x) \leq p^{+}< q^{-}\leq q(x) \leq q^{+}
		< N, 
	\end{gather}
	for all $x\in \mathbb{R}^N,$ and the function $s$ is continuous, radially symmetric on $\mathbb{R}^N$ and satisfies the following inequality
	\begin{gather}
		h^{-}\leq h(x) \leq r^{-}\leq r(x) \leq r^{+}\leq h^{*}(x)<\infty,
	\end{gather}	
	for all $x\in \mathbb{R}^N,$ where $p^-:= \text{ess inf}_{}\{p(x) : x\in \mathbb{R}^N\}$,  $p^{+}:= \text{ess sup}_{}\{p(x) : x\in \mathbb{R}^N\}$, and analogously for
	$q^-, q^+, h^-$,
	$h^+, r^-$ and $r^+$, with $h(x) =(1-\mathcal{H}(\kappa^3_{\star})) p(x)+ \mathcal{H}(\kappa^3_{\star})q(x),$ 
	where $\kappa^3_{\star}$ is given by condition $(\textbf{\textit{H}}_{a_2})$ below, and
	
	\[
	h^{*}(x)=\begin{cases}
		\frac{N h(x)}{N-h(x)} &\text{if } \	h(x)<N ,\\
		+\infty &\text{if}	\ h(x)\geq N,
	\end{cases}
	\]
	for all $x\in\mathbb{R}^N$,  where  $\mathcal{H}:\mathbb{R}_{0}^{+}\to \left\lbrace 0,1\right\rbrace  $ is given by 	
	
	\[
	\mathcal{H}(\kappa^3_{\star})=\begin{cases}
		1 & \text{ if }     \   	\kappa^3_{\star}>0,\\
		0 & \text{ if }     \       \kappa^3_{\star}<0.
	\end{cases}
	\]	
	Furthermore, we assume that the set  $\mathcal{C}_{h}$ defined as $\lbrace x \in \mathbb{R}^N \mid r(x) = h^{*}(x) \rbrace$ is not empty.
	
	\noindent	
	The operators $\mathcal{A}_i: X \to \mathbb{R}$, where $i$ can be either 1 or 2, and the operator $\mathcal{B}: X \to \mathbb{R}$, are defined as follows
  $$\mathcal{A}_i(u)= a_i(| u|^{p(x)}) | u|^{p(x)-2} u \text{ and }\mathcal{B}(u)= \displaystyle\int_{\mathbb{R}^N}\dfrac{1}{p(x)}\Big(A_1(|\nabla u|^{p(x)})+ b(x)A_2(|u|^{p(x)})\Big)dx,$$
 where $X$ is the following Banach space
	$$X:= W^{1,p(x)}_{b}(\mathbb{R}^N)\cap W^{1,h(x)}_{b}(\mathbb{R}^N).$$	
	Function $A_i(.)$ is defined as $A_i(t)=\displaystyle\int_{0}^{t}a_i(k)d k$
	 and function $a_i(.)$ is from condition $(\textbf{\textit{H}}_{a_1})$ below.
	
	In this paper, we shall consider the function $a_i: \mathbb{R}^+ \to \mathbb{R}^+$, which satisfies the following assumptions for either $i=1$ or $i=2$:	
	
	\begin{itemize}
		\item[$(\textbf{\textit{H}}_{a_1})$] $a_i(.)$ is of   class $C^1$.		
		\item[$(\textbf{\textit{H}}_{a_2})$] There exist positive constants $\kappa_i^0, \kappa_i^1, \kappa_i^2$ and $\kappa^3_{\star,}$ for $i=1$ or $2$, such that 
		
		$$\kappa_i^0 + \mathcal{H}(\kappa^3_{\star})\kappa_i^2 t^{\frac{q(x)-p(x)}{p(x)}} \leq a_i(t ) \leq \kappa_i^1 + \kappa^3_{\star}t^{\frac{q(x)-p(x)}{p(x)}},
		\
		\hbox{for a.e.} 
		\
		x\in \mathbb{R}^N
		\		
		\hbox{and  all}
		 \
		t \geq 0.$$
		
		\item[$(\textbf{\textit{H}}_{a_3})$] There exists a positive constant $c>0$ such that 
		
		$$ \min \left\lbrace a_i(t^{p(x)})t^{p(x)-2}, 
		\
		a_i(t^{p(x)})t^{p(x)-2}
		+t \frac{\partial(a_i(t^{p(x)})t^{p(x)-2})}{\partial t } \right\rbrace \geq ct^{p(x)-2},
		\
		\hbox{for a.e.}
		\
		x\in \mathbb{R}^N
		\
		\hbox{and  all}
		\
		t>0.$$		
		\item[$(\textbf{\textit{H}}_{a_4})$]
		There exist positive constants $\gamma$, $\alpha_i$ (for $i=1$ or $2$), and a positive function $\vartheta$ satisfying condition $(\textbf{\textit{F}}_2)$ below, such that	
		$$
		A_i(t) \geq \frac{1}{\alpha_i}a_i(t)t \text{ with } h^+ < \vartheta(x) < r^- \text{ and } \frac{q^+}{p^+} \leq \frac{\alpha_i}{\gamma} < \frac{\vartheta^-}{p^+}, \text{ for all }t \geq 0,
		$$
	 where $\gamma_i$ satisfies condition $(\textbf{\textit{K}}_2)$ below.
	\end{itemize}
	
	Next, let $K: \mathbb{R}_0^+ \to \mathbb{R}^+$ be a nondecreasing and continuous 
	Kirchhoff function
	such that
	\begin{itemize}
		\item[$(\textbf{\textit{K}}_1)$] There exists $\mathfrak{K}_0>0$ such that 
		$$ K(t)\geq \mathfrak{K}_0 =K(0), 
		\ \hbox{for all} \ 
		t \in \mathbb{R}_{0}^+.$$
		
		\item[$(\textbf{\textit{K}}_2)$] There exists $\gamma \in (\frac{q^+}{r^-}, 1]$ such that 
		
		$$ \widehat{K} \left(t\right) \geq \gamma K(t)t, 
		\ \hbox{for all} \ 
		 t \in \mathbb{R}_{0}^+,
		 \
		 \hbox{where}
		 \
		 \widehat{K}\left(t\right):= \int_{0}^{t}K(s)ds.$$
	\end{itemize}	
	There are many functions satisfying conditions $ (\textbf{\textit{K}}_1)-(\textbf{\textit{K}}_2)$, for example  $K(t)=\mathfrak{K}_0 + \mathfrak{K}_1t^{\frac{1}{\gamma}},$ 
	for 
	$\gamma \leq 1$, $\mathfrak{K}_0>0,$ and $\mathfrak{K}_1\geq 0$.	
	
	 In 
	 recent years, increasing attention has been paid to the study of differential and partial differential equations involving variable exponent. The interest in studying such problems was stimulated by their many physical applications. For example,
	  they 
	  have been
	  applied in nonlinear elasticity problems, electrorheological fluids, image processing, flow in porous media,
	  and elsewhere, see e.g. Chen et al. \cite{C1}, Diening et al. \cite{Dien0,Dien1},  Halsey \cite{H1}, R\v{a}dulescu and Repov\v{s} \cite{radulescu1}, Ru\v{z}i\u{c}ka \cite{Ru1,Ru2}, and the references therein).\par
	
	We shall illustrate the degree of generality of the kind of problems studied here, with adequate hypotheses on functions $a_1$ and $a_2$,  by exhibiting some examples of problems which are also interesting from the mathematical point of view and have a wide range of applications in physics and other fields.
		
	\textbf{Example I.} Considering $a_1\equiv 1$ and $a_2\equiv 1$, we see that $a_1$ and $a_2$ satisfiy conditions  $(\textbf{\textit{H}}_{a_1}),(\textbf{\textit{H}}_{a_2})$ and $(\textbf{\textit{H}}_{a_3})$ for $\kappa^0_{i}=\kappa^1_{i}=1,$
	 $\kappa^2_{i}>0$ (where $i=1$ or $2$), and $\kappa^3_{\star}=0$. In this particular case, we are investigating the following problem:	
	
	\begin{eqnarray*}
		\label{ss1.1}
		\begin{cases}
			K\Big( \mathcal{B}(u)\Big)
		\Big(\Delta_{p(x)}u -b(x)| u|^{p(x)-2} u\Big) =|u|^{r(x)-2}u+\uplambda(x) \frac{ \partial \mathcal{F}}{\partial u}(x,u,v) & \text{in } \mathbb{R}^N, \\
			K\left(\mathcal{B}(v)\right)
			\Big(-\Delta_{p(x)}v+b(x)|v|^{p(x)-2} v\Big) =|v|^{r(x)-2}v+\uplambda(x) \frac{ \partial \mathcal{F}}{\partial v}(x,u,v) & \text{in } \mathbb{R}^N,
		\end{cases}
	\end{eqnarray*}	
where 	$$\mathcal{B}(u)=\displaystyle\int_{\mathbb{R}^N }\dfrac{1}{p(x)}\left(|\nabla u|^{p(x)}+ b(x)|u|^{p(x)} \right)dx$$ and 
$$\mathcal{B}(v)=\displaystyle\int_{\mathbb{R}^N }\dfrac{1}{p(x)}\left(|\nabla v|^{p(x)}+ b(x)|u|^{p(x)} \right)dx.$$	
	The operator $ \Delta_{p(x)}u:=\textrm{div}\,(|\nabla u|^{p(x)-2}\nabla u)$ is known as the $p(x)$-Laplacian,  which coincides with the usual $p$-Laplacian when $p(x)=p$, and with the Laplacian when $p(x)=2$.
	
	\textbf{Example II.} Considering the functions $$a_1(t)= 1+ t^{\frac{q(x)-p(x)}{p(x)}}
	\
	\hbox{ and}
	\
	a_2(t)= 1+ t^{\frac{q(x)-p(x)}{p(x)}},$$ 
  We observe that both $a_1$ and $a_2$ satisfy conditions 
	 $(\textbf{\textit{H}}_{a_1}),(\textbf{\textit{H}}_{a_2})$
	 and
	  $(\textbf{\textit{H}}_{a_3}),$ with $\kappa^0_{i}=\kappa^1_{i}=\kappa^2_{i}=\kappa^3_\star=1$ ( for $i=1$ or $2$). In this case, we are investigating the following noncooperative $p\&q$-Laplacian system:
	\begin{eqnarray*}
		\label{ss1.1}
		\begin{cases}
			K\left(\mathcal{B}(u)\right)
			\Big(\Delta_{p(x)}u +\Delta_{q(x)}u -b(x) (| u|^{p(x)-2} u +| u|^{q(x)-2} u) \Big)=|u|^{r(x)-2}u+\uplambda(x) \frac{ \partial \mathcal{F}}{\partial u}(x,u,v) & \text{in } \mathbb{R}^N, \\
			K\left(\mathcal{B}(v)\right)
			\Big(-\Delta_{p(x)}v -\Delta_{q(x)}v +b(x) (| v|^{p(x)-2} v +| v|^{q(x)-2} v )\Big) =|v|^{r(x)-2}v+\uplambda(x) \frac{ \partial \mathcal{F}}{\partial v}(x,u,v) & \text{in } \mathbb{R}^N,
		\end{cases}
	\end{eqnarray*}
	where $$\mathcal{B}(u)=\displaystyle\int_{\mathbb{R}^N }\Bigg(\dfrac{1}{p(x)}\left(|\nabla u|^{p(x)}+ b(x)|u|^{p(x)} \right)+\dfrac{1}{q(x)}\left(|\nabla u|^{q(x)}+ b(x)|u|^{q(x)} \right)\Bigg)dx$$
	and	
	$$\mathcal{B}(v)=\displaystyle\int_{\mathbb{R}^N }\Bigg(\dfrac{1}{p(x)}\left(|\nabla v|^{p(x)}+ b(x)|v|^{p(x)} \right)+\dfrac{1}{q(x)}\left(|\nabla v|^{q(x)}+ b(x)|v|^{q(x)} \right)\Bigg)dx.$$ 
	This class of systems arises in various applications, such as reaction-diffusion systems described by
	\begin{equation} \label{DCE}
		\frac{\partial u}{\partial t}= \textrm{div}\left(a_1(\nabla u)\nabla u \right)+ P(x,u),
		\
		\hbox{where }
		\
		 a_1(\nabla u)=|\nabla u|^{p(x)-2}+|\nabla u|^{q(x)-2},
	\end{equation}
  where the reaction term $P(x, u)$ is a polynomial of $u$ with variable coefficients. Such systems have wide applications in physics and related sciences, including plasma physics, biophysics, and chemical reaction design. In these applications, the function $u$ represents concentration, the first term on the right-hand side of \eqref{DCE} accounts for diffusion with a diffusion coefficient $ a_1(\nabla u)$, and the second term represents the reaction, which is related to source and loss processes, typically in chemical and biological applications. For further details, interested readers can refer to works by Mahshid and Razani \cite{Mahshid}, He and Li \cite{He}, and the references therein.  
	
	We continue with other examples that are also interesting from the mathematical point of view.	
	
	\textbf{Example III.} Considering $a_1(t)= 1+ \frac{t}{\sqrt{1+t^2}}$ and  $a_2\equiv 1$, we can observe that both $a_1$ and $a_2$ satisfy conditions  $(\textbf{\textit{H}}_{a_1}),(\textbf{\textit{H}}_{a_2})$ and $(\textbf{\textit{H}}_{a_3}),$ for $\kappa^0_{1}=\kappa^0_{2}=\kappa^1_{2}=1$,$\kappa^1_{1}=2$,  $\kappa^3_{\star}=0$, $\kappa^2_{1}>0,$ and $\kappa^2_{2}>0$. In this scenario, we are studying the following problem:
	\begin{eqnarray*}
		\label{ss1.1}
		\begin{cases}
			K\left(\mathcal{B}(u)\right)\Bigg(\textrm{div}\,\Big( \Big(1+ \frac{|\nabla u|^{p(x)}}{\sqrt{1+|\nabla u|^{2p(x)}}}\Big)|\nabla u|^{p(x)-2}\nabla u\Big) -b(x)| u|^{p(x)-2} u\Bigg) =|u|^{r(x)-2}u+\uplambda(x) \frac{ \partial \mathcal{F}}{\partial u}(x,u,v) ~~ \text{ in } \mathbb{R}^N, \\
			K\left(\mathcal{B}(v)\right)\Bigg(-\textrm{div}\,\Big( \Big(1+ \frac{|\nabla v|^{p(x)}}{\sqrt{1+|\nabla v|^{2p(x)}}}\Big)|\nabla v|^{p(x)-2}\nabla v\Big)+b(x)|v|^{p(x)-2} v \Bigg)=|v|^{r(x)-2}v+\uplambda(x) \frac{ \partial \mathcal{F}}{\partial v}(x,u,v) ~~ \text{ in } \mathbb{R}^N, 
		\end{cases}
	\end{eqnarray*}
	where $$\mathcal{B}(u)=\displaystyle\int_{\mathbb{R}^N }\frac{1}{p(x)}\Big(|\nabla u|^{p(x)} + \sqrt{1+|\nabla u|^{2p(x)}}+ b(x)|u|^{p(x)}\Big)dx$$ 
	and
	$$\mathcal{B}(v)=\displaystyle\int_{\mathbb{R}^N }\frac{1}{p(x)}\Big(|\nabla v|^{p(x)} + \sqrt{1+|\nabla v|^{2p(x)}}+ b(x)|v|^{p(x)}\Big)dx.$$
	The operator  $$\textrm{div}\,\Big( \Big(1+ \frac{|\nabla u|^{p(x)}}{\sqrt{1+|\nabla u|^{2p(x)}}}\Big)|\nabla u|^{p(x)-2}\nabla u\Big)$$
	is known as the $p(x)$-Laplacian-like operator or the generalized capillary operator, which has applications in various fields such as industry, biomedicine, and pharmaceuticals. For further details, you can refer to Ni and Serrin \cite{Ni}.	
	
	\textbf{Example IV.} Considering $a_1(t)= 1+ t^{\frac{q(x)-p(x)}{p(x)}} +\frac{1}{(1+t)^{\frac{p(x)-2}{p(x)}}}$ and $a_2(t)= 1+ t^{\frac{q(x)-p(x)}{p(x)}}$, we have that $a_1$ and $a_2$  satisfy conditions
	  $(\textbf{\textit{H}}_{a_1}),(\textbf{\textit{H}}_{a_2})$ and $(\textbf{\textit{H}}_{a_3})$ with $\kappa^0_{1}=\kappa^0_{2}=\kappa^1_{2}=1$, $\kappa^1_{1}= 2$ and $\kappa^3_{\star}=\kappa^2_{1}=\kappa^2_{2}=1$. In this case we are studying problem
	
	\begingroup\makeatletter\def \f@size{10}\check@mathfonts
	\begin{eqnarray*}
		\label{ss1.1}
		\begin{cases}
				\begin{gathered}
			K\left(\mathcal{B}(u)\right) \Bigg(\Delta_{p(x)}u +\Delta_{q(x)}u +\textrm{div}\,\Big( \dfrac{|\nabla u|^{p(x)-2}\nabla u}{(1+ |\nabla u|^{p(x)})^{\frac{p(x)-2}{p(x)}}}\Big)-b(x) \Big(| u|^{p(x)-2} u +| u|^{q(x)-2} u \Big)\Bigg)\\
			=|u|^{r(x)-2}u+\uplambda(x) \frac{ \partial \mathcal{F}}{\partial u}(x,u,v) ~~  \text{ in } \mathbb{R}^N, \\
			K\left(\mathcal{B}(v)\right)\Bigg(-\Delta_{p(x)}v -\Delta_{q(x)}v- \textrm{div}\,\Big( \dfrac{|\nabla v|^{p(x)-2}\nabla v}{(1+ |\nabla v|^{p(x)})^{\frac{p(x)-2}{p(x)}}}\Big) +b(x) \Big(| v|^{p(x)-2} v +| v|^{q(x)-2} v \Big)\Bigg) \\
			=|v|^{r(x)-2}v+\uplambda(x) \frac{ \partial \mathcal{F}}{\partial v}(x,u,v)  ~~\text{ in } \mathbb{R}^N .
			\end{gathered}
		\end{cases}
	\end{eqnarray*}
	\endgroup
		where 
		$$\mathcal{B}(u)=\displaystyle\int_{\mathbb{R}^N }\Bigg(\dfrac{1}{p(x)}\left(|\nabla u|^{p(x)}+ b(x)|u|^{p(x)} \right)+\dfrac{1}{q(x)}\left(|\nabla u|^{q(x)}+ b(x)|u|^{q(x)} \right) + \frac{1}{2}(1+ |\nabla u|^{p(x)})^{\frac{2}{p(x)}}\Bigg)dx$$ 
		and
		 $$\mathcal{B}(v)=\displaystyle\int_{\mathbb{R}^N }\Bigg(\dfrac{1}{p(x)}\left(|\nabla v|^{p(x)}+ b(x)|v|^{p(x)} \right)+\dfrac{1}{q(x)}\left(|\nabla v|^{q(x)}+ b(x)|v|^{q(x)} \right) + \frac{1}{2}(1+ |\nabla v|^{p(x)})^{\frac{2}{p(x)}}\Bigg)dx.$$
		 Moreover, the class of systems \eqref{s1.1} can include either a single model of the divergence operators mentioned above, as in Examples I–IV, or two different models in each equation for divergence operators simultaneously, depending on the studied phenomenon. Also, every equation in this class can be either degenerate or non-degenerate.\par
    	In the case of a single equation, system \eqref{s1.1} is related to a model that was first proposed by Kirchhoff in 1883. This model represents the stationary version of the Kirchhoff equation, which can be written as:	
	\begin{equation}
		\label{e1.1}
		\uprho\frac{\partial^2u}{\partial t^2}-\left( \dfrac{\uprho_0}{h}+\dfrac{E}{2L}\int_{0}^{L}\left\vert\dfrac{\partial u(x)}{\partial x}\right\vert^2dx \right)\dfrac{\partial^2u}{\partial x^2}=0.
	\end{equation}
	\noindent
   This equation extends the classical D'Alembert wave equation by considering the small vertical vibrations of a stretched elastic string with variable tension and fixed ends. One distinctive feature of equation (\ref{e1.1}) is the presence of a nonlocal coefficient:
	 $$\dfrac{\uprho_0}{h}+\dfrac{E}{2L}{\displaystyle\int_{0}^{L}\left\vert\dfrac{\partial u}{\partial x}\right\vert^2dx}.$$ This coefficient depends on the average value:
	 $$\dfrac{1}{2L}{\displaystyle\int_{0}^{L}\left\vert\dfrac{\partial u}{\partial x}\right\vert^2dx}.$$
	 As a result, the equation is no longer a pointwise equation, and this nonlocal aspect distinguishes it from the classical wave equation.
	  
	  \noindent
	  The parameters in equation (\ref{e1.1}) have the following meanings: $u=u(x,t)$ represents the transverse string displacement at the spatial coordinate $x$ and time $t$, $E$ is the Young modulus of the material, also known as the elastic modulus, which measures the string's resistance to elastic deformation, $\uprho$ is the mass density, $L$ is the length of the string, $h$ is the area of cross-section, and $\uprho_0$ is the initial tension (for more details see  Kirchhoff \cite{Kir}). 
	  
	  Almost one century later, Jacques-Louis Lions \cite{Lions} returned to the equation and proposed a general Kirchhoff equation in arbitrary dimension with external force term which was written as
	
	\begin{eqnarray}
		\label{s1.3}
		\begin{cases}
			\frac{\partial^2u}{\partial t^2}-\left(
			a+b \int_{\Omega  }\left| \nabla u\right| ^2dx \right)\Delta u=f(x,u) &\quad \text{ in }\Omega, \\
			u=0 &\quad\text{on }\partial\Omega;
		\end{cases}
	\end{eqnarray}
this problem is often referred to as a nonlocal problem because it involves an integral over the domain $\Omega$. This integral component introduces mathematical complexities that make the study of such problems particularly interesting. The nonlocal problem serves as a model for various physical and biological systems in which the variable $u$ represents a process dependent on its own average, such as population density. For further references on this subject, interested readers can explore the works of Arosio and Pannizi \cite{Arosio}, Cavalcanti et al. \cite{Cavalcanti},  Chipot and Lovat \cite{Chipo}, and  Corr\^{e}a and Nascimento \cite{Corr2}, along with the references provided therein.\par

On one hand, it is widely acknowledged that the class of elliptic problems with constant critical exponents in bounded or unbounded domains holds a significant place in the literature. This class of problems was first introduced in the seminal paper by Brezis and Nirenberg \cite{Brezis}, which primarily focused on Laplacian equations. Subsequently, various extensions of the results presented in \cite{Brezis} have been explored in many directions. A notable feature of elliptic equations involving critical growth is the issue of a lack of compactness, which is closely tied to the variational approach. To address this lack of compactness, P.L. Lions \cite{Lions1} developed a method employing the concentration compactness principle (CCP) to establish that a minimizing sequence or a Palais-Smale (PS) sequence is precompact. Following this development, a variable exponent version of P.L. Lions' concentration-compactness principle for bounded domains was independently formulated by Bonder and Silva \cite{Bonder}, Fu \cite{Fu}, while the version for unbounded domains was introduced by Fu \cite{Fu1}. Subsequently, numerous researchers have employed these results to investigate critical elliptic problems involving variable exponents, as evidenced by the works of Alves et al. \cite{Alves0,Alves}, Chems Eddine et al. \cite{chems1,chems2,chems3}, Hurtado et al. \cite{Hurtado}, Liang et al. \cite{Liang01,Liang0,Liang1,Liang2}, Fu and Zhang \cite{Zhang}.\par

On the other hand, over the past few decades, there has been significant interest among researchers in studying elliptic problems that lead to indefinite functionals. For instance, in the work by Benci \cite{Benci}, it was assumed that $X$ is a Hilbert space, and $f$ satisfies the Palais-Smale condition, and has the form
$$ f(u)= \frac{1}{2}\left\langle L(u),u\right\rangle +\Phi(u),\quad u \in H,$$
where $L$ is bounded self-adjoint operator and $\Phi'$ is compact. Nevertheless, the solution spaces are not necessarily Hilbert spaces. To overcome this difficulty,  in \cite{Li1}, Li introduced a Limit index theory and applied it
to estimate the number of solutions for the following noncooperative $p$-Laplacian elliptic system with Dirichlet boundary conditions
\begin{eqnarray}
	\label{LI1.1}
	\begin{cases}
		\Delta_{p}u =\frac{\partial F}{\partial u}(x,u,v) & \text{in } \Omega , \\
		-\Delta_{p}v =\frac{\partial F}{\partial v}(x,u,v) & \text{in } \Omega,\\
		u=0,\quad v=0,& \text{on  } \partial\Omega  .
	\end{cases}
\end{eqnarray}

Following that, Huang and Li \cite{Huang} studied the following noncooperative $p$-Laplacian elliptic system in the unbounded domain of $\mathbb{R}^N$ by using  the principle of symmetric criticality and the Limit index theory
\begin{eqnarray*}
	\label{ss1.1}
	\begin{cases}
		\Delta_{p}u -| u|^{p-2} u = \frac{\partial F}{\partial u}(x,u,v) & \text{in } \mathbb{R}^N, \\
		-\Delta_{p}v+|v|^{p-2} v = \frac{\partial F}{\partial v}(x,u,v) & \text{in } \mathbb{R}^N ,\\
		u,v \in W^{1,p}(\mathbb{R}^N),
	\end{cases}
\end{eqnarray*}
where $1<p<N,$ and 
extended some results of Li \cite{Li1}. 
Next, Cai and Li \cite{Cai}  dealt with the case when the corresponding functional of (\ref{LI1.1}) may
not be locally Lipschitz continuous in Banach spaces.
Lin and Li \cite{Lin1}, studied problem (\ref{LI1.1}) with critical exponents of the form
\begin{eqnarray}
	\label{lii1.1}
	\begin{cases}
		\Delta_{p}u = | u|^{p^*-2}u +\frac{\partial F}{\partial u}(x,u,v) & \text{in } \Omega , \\
		-\Delta_{p}v = |v|^{p^*-2} v+ \frac{\partial F}{\partial v}(x,u,v) & \text{in } \Omega  ,\\
		u=v=0 & \text{on  } \partial\Omega.
	\end{cases}
\end{eqnarray}
where $\Omega$ is a bounded domain in $\mathbb{R}^N$, $1<p,q<N$, $p^*=\frac{Np}{N-p}$ and $q^*=\frac{Nq}{N-q}$,
and established the existence of multiple solutions for problem (\ref{lii1.1}) without using Concentration compactness principle. Some similar results for the noncooperative $p(x)$-Laplacian elliptic problems were obtained by Liang et al. \cite{Liang1,Liang2}. Recently, Chems Eddine \cite{chems34} extended these results to the problem \eqref{s1.1} when $K\equiv1$, $\uplambda$ is a real number, and the functions $p$ and $q$ are Lipschitz continuous.\par 

Our objective in this paper, is to study the existence and multiplicity of solutions for a class of the generalized noncooperative Schr\"{o}dinger- Kirchhoff-type systems with critical nonlinearity involving a general variable exponent elliptic operator in $\mathbb{R}^N$.  More precisely, our main results of this work extend, complement, and complete several works, in particular Chems Eddine \cite{chems34}, 
Fang and Zhang \cite{Fang1},
Huang and Li \cite{Huang}, Li \cite{Li1}, Liang and Zhang \cite{Liang2},  and some papers listed therein. 

As we shall see in the next sections, there are three main difficulties in our situation. First, the energy functional corresponding to problem (\ref{s1.1}) is strongly indefinite. Here, we mean strongly indefinitely that a functional is unbounded from below and from above on any subspace of finite codimension. Hence, we cannot apply the Mountain pass theorem for the energy functional. The second difficulty in solving problem (\ref{s1.1}) is the lack of compactness which can be illustrated by the fact that the embedding of $W^{1,h(x)}_{b}(\mathbb{R}^N) \hookrightarrow L^{h^{*}(x)}(\mathbb{R}^N)$
is no longer compact. The third
difficulty is that 
 problem (\ref{s1.1}) involves nonlocal terms $K\left(\mathcal{B}(u)\right)$ and  $K\left(\mathcal{B}(v)\right)$  which prevent us from applying the methods as before. To overcome these difficulties, we use the limit index theory developed by Li \cite{Li1}, the  principle of concentration compactness (Theorem \ref{ccpp}), the
 concentration compactness principle at infinity  (Theorem \ref{ccppp}) for the weighted-variable exponent Sobolev spaces $W^{1,h(x)}_{b}(\mathbb{R}^N)$, and the principle of symmetric criticality of Krawcewicz and Marzantowicz \cite{Krawcewicz}.\par 

Throughout this paper, we shall assume that $\mathcal{F}$ satisfies the following conditions:
\begin{itemize}
	\item[$(\textbf{\textit{F}}_1)$] $\mathcal{F}\in C^1(\mathbb{R}^N\times\mathbb{R}^2,\mathbb{R}^+)$ and
	it 
	 satisfies
	$$ \left| \frac{ \partial \mathcal{F}}{\partial \upeta}(x,\upeta,\upxi )\right| +\left| \frac{ \partial \mathcal{F}}{\partial \upxi} (x,\upeta,\upxi )\right| \leq f_1(x)\left| \upeta\right|^{\ell(x)-1}+ f_2(x)\left| \upxi \right|^{\ell(x)-1},
	$$
	where $\ell\in \mathcal{M}(\mathbb{R}^N)$, 
	$q^+<\ell(x)< h^{*}(x)  
	\
	\hbox{for all}
	\
	 x \in \mathbb{R}^N ,$ and $0\leq f_1, f_2\in L^{l(x)}\cap L^{\infty}(\mathbb{R}^N) \text{ with }  l(x)= h^{*}(x)/ (h^{*}(x)-\ell(x)).$
	
	\item[$(\textbf{\textit{F}}_2)$] There exists $\vartheta(x) $ such that $h^+< \vartheta(x) $ and  $0<\vartheta(x)\mathcal{F}(x,\upeta,\upxi )\leq \upeta\frac{ \partial \mathcal{F}}{\partial \upeta}(x,\upeta,\upxi )+ \upxi \frac{ \partial \mathcal{F}}{\partial \upxi} (x,\upeta,\upxi )$, for all $(x,\upeta,\upxi )\in  (\mathbb{R}^N\times\mathbb{R}^2)$.	
	
	\item[$(\textbf{\textit{F}}_3)$]
	$\upeta\frac{ \partial \mathcal{F}}{\partial \upeta}(x,\upeta,\upxi )\geq  0$ for all $(x,\upeta,\upxi )\in\mathbb{R}^N\times\mathbb{R}^2$.	
	
	\item[$(\textbf{\textit{F}}_4)$]  $\mathcal{F}$ is even in $(\upeta,\upxi )$ : $ \mathcal{F}(x,\upeta,\upxi )=\mathcal{F}(x,-\upeta,-\upxi )$ for all $(x,\upeta,\upxi )\in\mathbb{R}^N\times\mathbb{R}^2$.
	
	\item[$(\textbf{\textit{F}}_5)$]$ \mathcal{F}(x,\upeta,\upxi )=\mathcal{F}(|x|,\upeta,\upxi )$ for all $(x,\upeta,\upxi )\in\mathbb{R}^N\times\mathbb{R}^2$.	
\end{itemize}

The main result of this paper is as follows.

\begin{theorem}\label{theooo}
Assume that conditions $(\textbf{\textit{H}}_{a_1})-(\textbf{\textit{H}}_{a_4})$, $(\textbf{\textit{K}}_1)-(\textbf{\textit{K}}_2),$ and $(\textbf{\textit{F}}_1)-(\textbf{\textit{F}}_5)$ are satisfied. Then there exists a constant $\uplambda_{\star} >0$, such that if 
$\uplambda(x)$ satisfies the following condition 
	$$0<\uplambda^-:=\inf_{ x\in\mathbb{R}^N}\uplambda(x)\leq \uplambda^+:= \| \uplambda\|_{L^{\infty}(\mathbb{R}^N)}\leq \uplambda_{\star},$$
	 then  problem (\ref{s1.1}) possesses infinitely many weak solutions in $ X\times X$.
\end{theorem}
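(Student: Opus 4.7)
The plan is to realise weak solutions of (\ref{s1.1}) as critical points of the $C^1$-functional
\[
I(u,v) = -\widehat{K}(\mathcal{B}(u)) + \widehat{K}(\mathcal{B}(v)) - \int_{\mathbb{R}^N}\frac{|u|^{r(x)}+|v|^{r(x)}}{r(x)}\,dx - \int_{\mathbb{R}^N}\uplambda(x)\mathcal{F}(x,u,v)\,dx
\]
on $X\times X$, and then apply the limit index theory of Li \cite{Li1}. Because the two Kirchhoff terms enter with opposite signs, $I$ is strongly indefinite, so classical Mountain Pass arguments do not apply. However, the radial symmetry of $p,q,h,r,b,\uplambda$ together with $(\textbf{\textit{F}}_5)$ makes $I$ invariant under the natural $O(N)$-action, and $(\textbf{\textit{F}}_4)$ gives invariance under $(u,v)\mapsto(-u,-v)$. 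The Krawcewicz--Marzantowicz principle of symmetric criticality \cite{Krawcewicz} then permits restricting $I$ to the radial subspace $X_r\times X_r$, where a Strauss-type argument restores compactness of the embedding $X_r\hookrightarrow L^{s(\cdot)}(\mathbb{R}^N)$ for $s$ strictly between $h^-$ and $h^*$.

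Next, set up the limit-index framework. Choose a $\mathbb{Z}_2$-invariant Schauder filtration $Y_n\subset X_r$ with $\overline{\bigcup_n Y_n}=X_r$ and a matching codimension-$n$ filtration $Z_n$; these produce the candidate subspaces of $X_r\times X_r$ needed to define the min--max levels $c_n$ in the sense of \cite{Li1}. Boundedness of any $(PS)^*_c$-sequence $\lbrace(u_n,v_n)\rbrace$ then follows from an Ambrosetti--Rabinowitz-type computation: conditions $(\textbf{\textit{K}}_2)$, $(\textbf{\textit{H}}_{a_4})$ and $(\textbf{\textit{F}}_2)$ are tuned so that a suitable linear combination of $I(u_n,v_n)$ and $\langle I'(u_n,v_n),(u_n,-v_n)\rangle$ dominates a modular norm of $(u_n,v_n)$ in $X_r\times X_r$.

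The main obstacle is promoting weak to strong convergence of such a bounded $(PS)^*_c$-sequence below a quantitative threshold. Apply Theorem \ref{ccpp} to the densities $|u_n|^{r(\cdot)}$ and $|v_n|^{r(\cdot)}$ to obtain possibly atomic measures concentrated on the critical set $\mathcal{C}_h=\lbrace r=h^*\rbrace$, and Theorem \ref{ccppp} to quantify mass escaping to infinity. Testing $I'(u_n,v_n)$ against cut-offs $\varphi_\varepsilon(\cdot-x_i)u_n$ and $\varphi_\varepsilon(\cdot-x_i)v_n$ centred at each putative atom $x_i$ (and analogous cut-offs for mass at infinity), and exploiting the lower bounds $K\ge\mathfrak{K}_0$ from $(\textbf{\textit{K}}_1)$ and $a_i\ge\kappa_i^0$ from $(\textbf{\textit{H}}_{a_2})$, forces each atomic weight to be either zero or at least a universal constant $\eta_0=\eta_0(\mathfrak{K}_0,S_h,h^\pm,r^\pm,N)$ expressed through the best variable-exponent Sobolev constant $S_h$. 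Combining this with the energy identity $c=\lim_n\bigl(I(u_n,v_n)-\tfrac{1}{\vartheta^-}\langle I'(u_n,v_n),(u_n,v_n)\rangle\bigr)$ rules out all atoms provided $c<c^\star:=\bigl(\tfrac{1}{\vartheta^-}-\tfrac{1}{r^-}\bigr)\eta_0$; strong $X$-convergence then follows from the $(S_+)$-type monotonicity built into $(\textbf{\textit{H}}_{a_3})$.

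To complete the argument, it remains to force the min--max values $c_n$ below $c^\star$. Direct upper bounds on $Y_n$ using $(\textbf{\textit{F}}_1)$ yield $c_n\le C_n(\uplambda^+)^{\tau}$ for some $\tau>0$ depending on $r^-,\ell^+,\vartheta^-$, so choosing $\uplambda_\star>0$ small enough guarantees $c_n<c^\star$ for every $n$ in the relevant limit-index range. Li's limit-index critical-point theorem \cite{Li1}, applied to the $\mathbb{Z}_2$-equivariant, strongly indefinite functional $I|_{X_r\times X_r}$ which now satisfies $(PS)^*_c$ for $c<c^\star$, produces an unbounded sequence of distinct critical values $\lbrace c_n\rbrace$. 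Symmetric criticality finally lifts the corresponding critical points back to infinitely many weak solutions of (\ref{s1.1}) in $X\times X$, completing the proof.
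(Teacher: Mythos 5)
Your overall architecture coincides with the paper's: restrict the indefinite functional to the radial subspace via the Krawcewicz--Marzantowicz principle, establish a local $(PS)^*_c$ condition below a concentration--compactness threshold $c^\star=\bigl(\tfrac{1}{\vartheta^-}-\tfrac{1}{r^-}\bigr)\max\bigl\{(\mathfrak{K}_0 D S^{-h^-/r^+})^{r^+/(r^+-h^-)},(\mathfrak{K}_0 D S^{-h^-/r^-})^{r^-/(r^--h^-)}\bigr\}$ by testing against cut-offs at each atom and at infinity, and then run Li's limit index theory on the filtration $Z_n=X_{G_1}\times X_{G_1}^{(n)}$. Those parts are sound and essentially identical to Lemma \ref{lemma2} and the verification of $(\textbf{\textit{B}}_1)$--$(\textbf{\textit{B}}_7)$ in the paper (your single test pair $(u_n,-v_n)$ for boundedness is slightly off --- it produces $+\int|v_n|^{r(x)}dx$ with the wrong sign --- whereas the paper tests $(u_n,0)$ and $(0,v_n/\vartheta(x))$ separately; this is repairable).

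The genuine gap is your final step: the claim that the min--max levels satisfy $c_n\le C_n(\uplambda^+)^{\tau}$ and can therefore be pushed below $c^\star$ by shrinking $\uplambda_\star$. This is false, and the mechanism runs in the wrong direction. Since $\mathcal{F}\ge 0$ and $\uplambda>0$, decreasing $\uplambda$ \emph{increases} $E_{\uplambda}$; already at $\uplambda=0$ one has $E_0(0,v)=\widehat{K}(\mathcal{B}(v))-\int_{\mathbb{R}^N}\tfrac{1}{r(x)}|v|^{r(x)}dx$, whose supremum over any of the finite-dimensional pieces $Y_1$ is a fixed positive number independent of $\uplambda$, so no bound of the form $C_n(\uplambda^+)^\tau$ is available. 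More structurally, the limit-index levels are sandwiched as $\mathfrak{M}_m\le c_j\le\mathfrak{N}_k$, where $\mathfrak{M}_m=\inf\{E_\uplambda(0,v):v\in Y_0\cap S_{\rho_m}(0)\}$ satisfies $\mathfrak{M}_m\to+\infty$ as $m\to\infty$ (this divergence is exactly what yields infinitely many critical values), and $\mathfrak{M}_m$ only grows as $\uplambda^+$ decreases. Hence the levels cannot all lie below the fixed threshold $c^\star$, and your write-up is internally inconsistent: you simultaneously assert $c_n<c^\star$ for every $n$ and that the sequence $\{c_n\}$ is unbounded. To close the argument one must either make the compactness threshold depend on $\uplambda$ in such a way that it dominates $\mathfrak{N}_k$ for the relevant range of levels, or confine the construction to finitely many levels per choice of parameters and extract infinitely many solutions by a diagonal procedure; the bound $c_n\le C_n(\uplambda^+)^\tau$ as you state it does neither.
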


The  paper is organized as follows. In Section \ref{subsect21}, we briefly present some properties of the generalized weighted Sobolev spaces with variable exponents. In addition, we introduce the principle of concentration compactness and the concentration compactness principle at infinity in the generalized weighted-variable exponent Sobolev spaces. In Section \ref{subsect22}, we mainly introduce the limit index theory due to Li \cite{Li1}.
 In Section \ref{sect3}, we provide proof for the main results,  after we have verified the Palais-Smale condition at some special energy levels, by using the concentration-compactness principle.	
	
	\section{Preliminaries and basic notations}\label{sec2}
	In this section, we introduce some definitions and results which will be used in the next section. Throughout this paper, we employ the following notation and conventions: We use $\rightarrow$ to denote strong convergence, $\rightharpoonup$ for weak convergence, and $\overset{\ast}{\rightharpoonup}$ for weak-* convergence. For any given $\rho>0$ and $x\in \Omega$, $B_{\rho}(x)$ represents the ball with radius $\rho$ centered at $x$. The duality pairing between $X'$ and $X$ is represented by $\langle \cdot,\cdot \rangle$.  $C$ and $c$ denote a positive constants and can be determined based on specific conditions.
	
	\subsection{Generalized weighted  variable Sobolev spaces and the  principle of concentration compactness}\label{subsect21}
     First, we shall introduce some fundamental results from the theory of Lebesgue-Sobolev spaces with variable exponents. The details can be found in Diening et al. \cite{Dien1}, Fan and Zhao \cite{Fan3},  and 
     Kov\'{a}\v{c}ik and R\'{a}kosn\'{i}k \cite{Kov1}. Let $\mathcal{M}(\mathbb{R}^N)$ be the set of all measurable real functions on $\mathbb{R}^N$. We define
	\begin{equation*}
		C_{+}(\mathbb{R}^N)= \big\{  p \in C(\mathbb{R}^N) :  \text{ess inf}_{x\in \mathbb{R}^N}p(x)> 1 \big\} .
	\end{equation*}	
  	Additionally, we denote by $C_{+}^{\text{log}}(\mathbb{R}^N)$  the set of functions $p\in C_{+}(\mathbb{R}^N)$ that satisfy the log-Holder continuity condition
$$\sup \left\{ |p(x)-p(y)| \log \frac{1}{|x-y|} : x,y\in \mathbb{R}^N, 0< |x-y|<\frac{1}{2} \right\} < \infty.$$

	For any $ p \in C_{+}(\mathbb{R}^N)$ we define
	\begin{gather*}
		p^{-}:= \text{ess inf}_{x\in \mathbb{R}^N}p(x) \quad\text{ and} ~p^{+}:= \text{ess sup}_{x\in \mathbb{R}^N}p(x) .
	\end{gather*}
	For any $p \in C_{+}(\mathbb{R}^N)$, we define the variable exponent Lebesgue space as	
	\begin{gather*}
		L^{p(x)}(\mathbb{R}^N)= \left\{ u \in \mathcal{M}(\mathbb{R}^N) : \int_{\mathbb{R}^N}|u(x)|^{p(x)}dx <\infty \right\},
	\end{gather*}	
	endowed with the Luxemburg norm	
	\begin{equation*}
		|u|_{p(x)}:=|u|_{L^{p(x)}(\mathbb{R}^N)} = \inf \left\{\tau >0 : \int_{\mathbb{R}^N }\left|\frac{u(x)}{\tau}\right|^{p(x)} dx \leq 1 \right \}.
	\end{equation*}	
	Let $b \in \mathcal{M}(\mathbb{R}^N)$, and $b(x)>0$ for a.e $x\in \mathbb{R}^N$. Define the weighted variable exponent Lebesgue space $L_{b}^{p(x)}(\mathbb{R}^N)$ by 
	\begin{gather*}
		L_{b}^{p(x)}(\mathbb{R}^N)= \left\{ u \in \mathcal{M}(\mathbb{R}^N) : \int_{\mathbb{R}^N}b(x)|u(x)|^{p(x)}dx <\infty \right\},
	\end{gather*}
	with  the norm 	
	\begin{equation*}
		|u|_{b,p}:=|u|_{L_{b}^{p(x)}(\mathbb{R}^N)} = \inf \left\{ \tau >0 : \int_{\mathbb{R}^N }b(x)\left|\frac{u(x)}{\tau}\right|^{p(x)} dx \leq 1 \right \}.
	\end{equation*}
	
	From now on, we shall assume that $w \in L^{\infty}(\mathbb{R}^N)$ with $b_0:= \rm{ess} \inf_{x\in \mathbb{R}^N} b(x) > 0$. Then obviously $L_{b}^{p(x)}(\mathbb{R}^N)$ is a Banach space (see Cruz-Uribe et al. \cite{Cruz1} for details) , and the norms $|u|_{b,p}$ and $|u|_{p}$ are equivalent in $L_{b}^{p}(\mathbb{R}^N)$. \par 	
	On the other hand, the variable exponent Sobolev space $W^{1,p(x)}(\mathbb{R}^N)$ is defined by
	\[
	W^{1,p(x)}(\mathbb{R}^N)=\{ u\in L^{p(x)}(\mathbb{R}^N):| \nabla
	u| \in L^{p(x)}(\mathbb{R}^N)\},
	\]
	and is endowed with the norm
	\[
	\| u\| _{1,p(x)}:=\| u\|_{W^{1,p(x)}(\mathbb{R}^N)}
	=|u| _{p(x)}+| \nabla u| _{p(x)},   ~~~~ \  \hbox{for all} \  u\in W^{1,p(x)}(\mathbb{R}^N ).
	\]	
   Next, we define the weighted-variable exponent Sobolev space $W_{b}^{1,p(x)}(\mathbb{R}^N)$ as follows
	\[
	W_{b}^{1,p(x)}(\mathbb{R}^N)=\big\{ u\in L_{b}^{p(x)}(\mathbb{R}^N):| \nabla
	u| \in L_{b}^{p(x)}(\mathbb{R}^N)\big\}.
	\]
	This space is equipped with the norm
	\begin{equation*}
		\| u\| _{b,p} := \inf \left\{ \tau >0 :  \int_{\mathbb{R}^N } \left|\frac{\nabla u(x)}{\tau}\right|^{p(x)} +b(x)\left|\frac{u(x)}{\tau}\right|^{p(x)} dx \leq 1 \right \},  \  \hbox{for all} \  u \in W_{b}^{1,p(x)}(\mathbb{R}^N).
	\end{equation*}	
     It is worth noting that the norms $\| u\|_{b,p}$ and $\| u\| _{1,p(x)}$ are equivalent in the space $W_{b}^{1,p(x)}(\mathbb{R}^N)$. Moreover, if $p^->1$, then the spaces $L^{p(x)}(\mathbb{R}^N)$, $W^{1,p(x)}(\mathbb{R}^N)$, and $W_{b}^{1,p(x)}(\mathbb{R}^N)$ are separable, reflexive, and uniformly convex Banach spaces.
     
     We now present some essential facts that will be utilized later.
     \begin{proposition}[see \cite{Dien1, Fan3}] \label{prop1}
     	The conjugate space of $L^{p(x)}(\mathbb{R}^N)$ is $L^{p'(x)}(\mathbb{R}^N)$, where
     	\[
     	\frac{1}{p(x)}+\frac{1}{p'(x)}=1.
     	\]
     	Furthermore, for any $(u,v)\in L^{p(x)}(\mathbb{R}^N)\times L^{p'(x)}(\mathbb{R}^N)$,
     	we have the following H\"older-type inequality
     	\[
     	| \int_{\mathbb{R}^N}uvdx|
     	\leq (\frac{1}{p^{-}}+ \frac{1}{(p')^{-}})| u|_{p(x)}| v| _{p'(x)}
     	\leq 2| u| _{p(x)}| v|_{p'(x)}.
     	\]
     \end{proposition}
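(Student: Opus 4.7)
The plan is to establish the Hölder-type inequality first, since it is the computational backbone, and then use it to identify the topological dual of $L^{p(x)}(\mathbb{R}^N)$ with $L^{p'(x)}(\mathbb{R}^N)$.

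For the Hölder inequality, the key tool is pointwise Young's inequality: for $a,b\geq 0$ and $t>1$ with conjugate exponent $t'=t/(t-1)$, one has $ab\leq a^{t}/t + b^{t'}/t'$. I would first reduce to the normalized case $|u|_{p(x)}\leq 1$ and $|v|_{p'(x)}\leq 1$ by homogeneity, replacing $u$ by $u/|u|_{p(x)}$ and $v$ by $v/|v|_{p'(x)}$. The unit-ball property of the Luxemburg norm then gives $\int_{\mathbb{R}^N}|u|^{p(x)}\,dx\leq 1$ and $\int_{\mathbb{R}^N}|v|^{p'(x)}\,dx\leq 1$. Applying Young's inequality pointwise with $t=p(x)$ yields
$$|u(x)v(x)|\leq \frac{|u(x)|^{p(x)}}{p(x)} + \frac{|v(x)|^{p'(x)}}{p'(x)}.$$
Integrating over $\mathbb{R}^N$, bounding $1/p(x)\leq 1/p^{-}$ and $1/p'(x)\leq 1/(p')^{-}$, and then undoing the normalization gives the stated inequality, with the final factor $2$ following from $1/p^{-}+1/(p')^{-}\leq 2$ (since $p^{-},(p')^{-}\geq 1$).

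For the duality identification, the map $T\colon L^{p'(x)}(\mathbb{R}^N)\to (L^{p(x)}(\mathbb{R}^N))^{\ast}$ given by $T(v)(u):=\int_{\mathbb{R}^N} uv\,dx$ is well-defined, linear, and bounded by the Hölder-type inequality just proved. The nontrivial direction is surjectivity together with a norm comparison. Given $\Lambda\in (L^{p(x)}(\mathbb{R}^N))^{\ast}$, I would define a signed set function $\mu(E):=\Lambda(\chi_{E})$ on measurable sets $E$ of finite Lebesgue measure; continuity of $\Lambda$ together with the inequality $\|\chi_{E}\|_{p(x)}\to 0$ as $|E|\to 0$ yields absolute continuity of $\mu$ with respect to Lebesgue measure, so the Radon–Nikodym theorem produces a locally integrable density $v$ with $\Lambda(\chi_{E})=\int_{E} v\,dx$. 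Extending first by linearity to simple functions and then by density to all of $L^{p(x)}(\mathbb{R}^N)$, one obtains $\Lambda(u)=\int_{\mathbb{R}^N} uv\,dx$ for every $u\in L^{p(x)}(\mathbb{R}^N)$.

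The main obstacle is to upgrade this density $v$ to a genuine element of $L^{p'(x)}(\mathbb{R}^N)$ with controlled norm. The standard variable-exponent technique is to truncate: set $v_{n}:=v\chi_{B_{n}(0)\cap\{|v|\leq n\}}$ and test $\Lambda$ against the explicit function $u_{n}:=|v_{n}|^{p'(x)-2}v_{n}/|v_{n}|_{p'(x)}^{p'(x)-1}$ (properly normalized), for which the modular of $u_{n}$ equals $1$ so that $\|u_{n}\|_{p(x)}\leq 1$. This yields a uniform bound $|v_{n}|_{p'(x)}\leq C\|\Lambda\|$, and monotone convergence then gives $v\in L^{p'(x)}(\mathbb{R}^N)$. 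The delicate point, which must be handled carefully because the constant-exponent rescaling does not apply directly, is the interplay between the modular and the Luxemburg norm under the truncation; I would follow the argument of Kováčik and Rákosník \cite{Kov1} to conclude. Combined with the Hölder inequality this shows $T$ is an isomorphism onto its image, which by the above construction is all of $(L^{p(x)}(\mathbb{R}^N))^{\ast}$.
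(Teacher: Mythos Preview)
The paper does not give its own proof of this proposition; it is stated with a citation to \cite{Dien1, Fan3} and used as a background fact. Your argument is the standard one found in those references (and in \cite{Kov1}), so there is nothing to compare: the normalization-plus-Young proof of the H\"older inequality and the Radon--Nikodym/truncation identification of the dual are exactly what the cited sources do.
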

	
	\begin{proposition}[see \cite{Dien1, Fan3}] \label{prop2}
		Denote $\rho_p (u)=\int_{\mathbb{R}^N}| u| ^{p(x)}dx$, for all $u\in L^{p(x)}(\mathbb{R}^N)$. We have
		\[
		\min \{ | u| _{p(x)}^{p^{-}},| u| _{p(x)}^{p^{+}}\}
		\leq \rho_p(u)\leq \max \{ | u| _{p(x)}^{p^{-}},| u| _{p(x)
		}^{p^{+}}\},
		\]
		
		and the following implications are true:  
		\begin{itemize}
			\item[(i)]  $| u| _{p(x)}<1$ (resp. $=1, >1$) $\Leftrightarrow \rho_p(u)<1$
			(resp. $=1,>1$),
			
			\item[(ii)] $| u| _{p(x)}>1 \Rightarrow | u| _{p(x)}^{p^{-}}\leq \rho_p(u)
			\leq | u| _{p(x)}^{p^{+}}$,
			
			\item[(iii)] $| u| _{p(x) }<1\Rightarrow | u| _{p(x)}^{p^{+}}\leq \rho_p(u)
			\leq | u| _{p(x)}^{p^{-}}$.
			
		\end{itemize}
	Additionally, in particular, for any sequence $\left\lbrace u_n\right\rbrace \subset L^{p(x)}(\mathbb{R}^N)$,
		$$| u_n|_{p(x)} \to 0 \text{ if and only if } \rho_p(u_n)\to ,0$$
		and 
		$$\left\lbrace u_n\right\rbrace \text{ is bounded in } L^{p(x)}(\mathbb{R}^N)  \text{ if and only if } \rho_p(u_n) \text{ is bounded in } \mathbb{R}.
		$$
	\end{proposition}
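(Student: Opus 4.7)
The plan is to exploit the definition of the Luxemburg norm through the scaling homogeneity of the modular $\rho_p(u) = \int_{\mathbb{R}^N} |u|^{p(x)}\,dx$. My first step would be to establish the key identity that $\rho_p(u/|u|_{p(x)}) = 1$ whenever $u \neq 0$. To do this I would study the map $\Phi(\tau) := \rho_p(u/\tau)$ for $\tau > 0$ and verify, via dominated convergence, that $\Phi$ is continuous and strictly decreasing with $\Phi(\tau) \to 0$ as $\tau \to \infty$ and $\Phi(\tau) \to \infty$ as $\tau \to 0^+$. Since the Luxemburg infimum is then attained at the unique root of $\Phi(\tau) = 1$, we obtain $\rho_p(u/|u|_{p(x)}) = 1$.

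With this identity in hand, the main two-sided inequality follows from a direct scaling argument. Writing $\tau_0 := |u|_{p(x)}$, we have
\[
\rho_p(u) = \int_{\mathbb{R}^N} \tau_0^{p(x)} \Big| \frac{u(x)}{\tau_0} \Big|^{p(x)} dx.
\]
If $\tau_0 \geq 1$, then $\tau_0^{p^-} \leq \tau_0^{p(x)} \leq \tau_0^{p^+}$ pointwise, and combining with $\rho_p(u/\tau_0) = 1$ yields $\tau_0^{p^-} \leq \rho_p(u) \leq \tau_0^{p^+}$, which is implication (ii). If $\tau_0 \leq 1$, the inequalities on $\tau_0^{p(x)}$ reverse and the same computation gives $\tau_0^{p^+} \leq \rho_p(u) \leq \tau_0^{p^-}$, which is (iii). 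Combining both cases gives the displayed $\min/\max$ bound. For (i), the borderline $\tau_0 = 1$ forces $\rho_p(u) = 1$ by the identity, while the two monotone branches from (ii)–(iii) and the strict monotonicity of $t \mapsto t^{p^\pm}$ on $(0,1)$ and $(1,\infty)$ give the equivalences for $\tau_0 < 1$ and $\tau_0 > 1$ separately.

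For the sequential statements, both assertions follow directly from the two-sided bound. If $|u_n|_{p(x)} \to 0$, then eventually $|u_n|_{p(x)} < 1$, so by (iii) we get $\rho_p(u_n) \leq |u_n|_{p(x)}^{p^-} \to 0$; conversely, if $\rho_p(u_n) \to 0$, then eventually $\rho_p(u_n) < 1$, which via (i) forces $|u_n|_{p(x)} < 1$, and then from (iii) one extracts $|u_n|_{p(x)}^{p^+} \leq \rho_p(u_n) \to 0$, so $|u_n|_{p(x)} \to 0$. Boundedness equivalence follows analogously: a bound on $|u_n|_{p(x)}$ yields $\rho_p(u_n) \leq \max\{|u_n|_{p(x)}^{p^-}, |u_n|_{p(x)}^{p^+}\}$, and a bound on $\rho_p(u_n)$ together with the lower estimate of the $\min/\max$ inequality produces a bound on $|u_n|_{p(x)}$.

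The only delicate point is the first step, namely showing that the Luxemburg infimum is attained and that $\Phi(\tau_0) = 1$; continuity of $\Phi$ at a point $\tau > 0$ requires dominated convergence with dominating function of the form $|u/\tau_1|^{p(x)}$ for some $\tau_1$ slightly below $\tau$, which is integrable by definition of $L^{p(x)}(\mathbb{R}^N)$, while strict monotonicity of $\Phi$ uses $u \neq 0$ together with $p(x) > 1$ a.e. Once this analytic step is settled, everything else in the proposition is purely algebraic manipulation of the scaling inequality.
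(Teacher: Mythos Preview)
Your proof is correct and follows the standard approach to this classical result. Note, however, that the paper does not actually prove Proposition~\ref{prop2}: it is stated with the attribution ``see \cite{Dien1, Fan3}'' and invoked as a known fact from the literature on variable exponent spaces, so there is no in-paper argument to compare against. Your argument --- establishing that the Luxemburg infimum is attained with $\rho_p(u/|u|_{p(x)}) = 1$ via continuity and strict monotonicity of $\tau \mapsto \rho_p(u/\tau)$, then scaling pointwise using $\tau_0^{p^-} \leq \tau_0^{p(x)} \leq \tau_0^{p^+}$ or its reverse --- is exactly the proof one finds in the cited references (e.g., Fan--Zhao \cite{Fan3}). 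One very minor remark: strict monotonicity of $\Phi$ does not require $p(x) > 1$ a.e.; it only needs $u \neq 0$ on a set of positive measure and $p(x) > 0$, so that part of your justification is slightly overstated but harmless.
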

	According to Proposition \ref{prop2}, we can derive the following inequalities:
	\begin{equation}
		\label{inq21}
		\| u\| _{b,p}^{p^{-}}  \leq  \int_{\mathbb{R}^N }\left( \left|\nabla u(x)\right|^{p(x)} +b(x)\left|u(x)\right|^{p(x)} \right)dx \leq  \| u\| _{b,p}^{p^{+}}    \quad \text{ for} ~~~~ \| u\| _{b,p} \geq 1.
	\end{equation}
	\begin{equation}
		\label{inq22}
		\| u\| _{b,p}^{p^{+}}  \leq  \int_{\mathbb{R}^N }\left( \left|\nabla u(x)\right|^{p(x)} +b(x)\left|u(x)\right|^{p(x)} \right)dx \leq  \| u\| _{b,p}^{p^{-}}    \quad \text{ for} ~~~~ \| u\| _{b,p} \leq 1.
	\end{equation}	
Moreover, for any sequence $\left\lbrace u_n\right\rbrace\subset W_{b}^{1,p(x)}(\mathbb{R}^N)$:
 $$\text{When }	\| u_n\| _{b,p} \to 0, \text{  it is equivalent to } \int_{\mathbb{R}^N }\left( \left|\nabla u_n(x)\right|^{p(x)} +b(x)\left|u_n(x)\right|^{p(x)} \right)dx\to 0,$$
	and 
	$$\text{When } \left\lbrace u_n\right\rbrace \text{ is bounded in } W_{b}^{1,p(x)}(\mathbb{R}^N), \text{  it is equivalent to } \int_{\mathbb{R}^N }\left( \left|\nabla u_n\right|^{p(x)} +b(x)\left|u_n\right|^{p(x)} \right)dx \text{ is bounded in } \mathbb{R}.
	$$	
	\begin{proposition}[see \cite{Edmu1}] \label{prop3}
		Let $p$ and $q$ be measurable functions such
		that $p\in L^{\infty }(\mathbb{R}^N)$ and
		$1\leq p(x), q(x)\leq \infty $ for a.e.
		$x \in \mathbb{R}^N$. If $u\in L^{q(x)}(\mathbb{R}^N)$, $u\neq 0$, then the following inequalities hold:
		\begin{gather*}
		 \text{If }	~~ | u| _{p(x)q(x)}\leq 1, ~~\text{  then } ~~|u| _{p(x)q(x)}^{p^{-}}
			\leq \big|| u| ^{p(x)}\big| _{q(x)}\leq | u| _{p(x)q(x)}^{p^{+}},
			\\
		 \text{If }	~~ 	| u| _{p(x)q(x)}\geq 1, ~~\text{  then } ~~ |u| _{p(x)q(x)}^{p^{+}}
			\leq \big| | u| ^{p(x)}\big| _{q(x)}\leq | u| _{p(x)q(x)}^{p^{-}}.
		\end{gather*}
		In particular, if $p(x)=p$ is constant, then
		$| | u| ^{p}| _{q(x)}
		=|u| _{pq(x)}^{p}.$
	\end{proposition}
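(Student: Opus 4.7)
The plan is to deduce Proposition \ref{prop3} from the modular--norm correspondence of Proposition \ref{prop2} via a single pointwise algebraic identity and a sign analysis in the variable $p(x)-s$. Set $\lambda := |u|_{p(x)q(x)}>0$ (positive since $u\ne 0$) and $v(x) := |u(x)|^{p(x)}$. Because $p\in L^{\infty}(\mathbb{R}^N)$, the exponent $p(x)q(x)$ is finite almost everywhere, and the unit-ball property of the Luxemburg norm (Proposition \ref{prop2}(i)) yields the modular identity
\begin{equation*}
\int_{\mathbb{R}^N}\Bigl(\frac{|u(x)|}{\lambda}\Bigr)^{p(x)q(x)}\,dx \;=\; 1.
\end{equation*}
For any constant $s>0$ I would then exploit the elementary pointwise identity
\begin{equation*}
\Bigl(\frac{|u(x)|}{\lambda}\Bigr)^{p(x)q(x)} \;=\; \Bigl(\frac{|u(x)|^{p(x)}}{\lambda^{s}}\Bigr)^{q(x)}\,\lambda^{(s-p(x))q(x)},
\end{equation*}
observing that the auxiliary factor $\lambda^{(s-p(x))q(x)}$ is sign-definite in $x$ once $s$ is chosen to be $p^{-}$ or $p^{+}$, since $s-p(x)$ then has a fixed sign on $\mathbb{R}^N$.

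The proof is then completed by four sub-cases. When $\lambda\le 1$, taking $s=p^{-}$ makes $\lambda^{(s-p(x))q(x)}\ge 1$, so integrating the displayed identity against the modular equality yields $\rho_{q(x)}(v/\lambda^{p^{-}})\le 1$ and hence $|v|_{q(x)}\le \lambda^{p^{-}}$; taking $s=p^{+}$ makes the auxiliary factor $\le 1$, producing $\rho_{q(x)}(v/\lambda^{p^{+}})\ge 1$ and therefore $|v|_{q(x)}\ge \lambda^{p^{+}}$ via the unit-ball property. The regime $\lambda\ge 1$ is entirely symmetric: the signs of the auxiliary factor reverse, and one obtains $\lambda^{p^{-}}\le |v|_{q(x)}\le \lambda^{p^{+}}$. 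In both regimes the bounds should be read as $\min(\lambda^{p^{-}},\lambda^{p^{+}})\le |v|_{q(x)}\le \max(\lambda^{p^{-}},\lambda^{p^{+}})$; this is the natural reading of the two-sided estimates of Proposition \ref{prop3}.

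For the constant-exponent addendum, if $p(x)\equiv p$, then $p^{-}=p^{+}=p$ and the factorisation with $s=p$ makes the auxiliary factor identically $1$, so $\rho_{q(x)}(v/\lambda^{p})=\rho_{p q(x)}(u/\lambda)=1$, giving the equality $\bigl||u|^{p}\bigr|_{q(x)}=|u|_{pq(x)}^{p}$. There is no real obstacle in the argument; the only delicate point is the attainment $\rho_{p(x)q(x)}(u/\lambda)=1$, which rests on $p(x)q(x)<\infty$ almost everywhere (guaranteed by $p\in L^{\infty}$) together with continuity of $\mu\mapsto \rho_{p(x)q(x)}(u/\mu)$. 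Should one wish to avoid strict attainment, the same conclusion follows from $\rho_{p(x)q(x)}(u/\lambda')\le 1$ for $\lambda'>\lambda$ by passing to the limit $\lambda'\downarrow\lambda$ in the modular inequalities.
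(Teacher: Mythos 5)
The paper itself contains no proof of Proposition \ref{prop3}: it is quoted from Edmunds and R\'akosn\'ik \cite{Edmu1}, so there is nothing internal to compare against. Your argument is the standard proof of this lemma --- the pointwise factorisation $\bigl(|u|/\lambda\bigr)^{p(x)q(x)}=\bigl(|u|^{p(x)}/\lambda^{s}\bigr)^{q(x)}\lambda^{(s-p(x))q(x)}$ with $s\in\{p^{-},p^{+}\}$, combined with the unit-ball property of Proposition \ref{prop2} --- and the four sub-cases, as well as the constant-exponent identity, are carried out correctly.

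Two caveats. First, your justification of the modular identity is off: $p\in L^{\infty}$ does \emph{not} make $p(x)q(x)$ finite a.e., since the hypothesis allows $q(x)=\infty$, and attainment of $\rho_{p(x)q(x)}(u/\lambda)=1$ genuinely requires a bounded exponent (which is what holds wherever the proposition is applied in this paper, so the argument is fine in context). To get the statement in its stated generality, avoid attainment altogether: the infimum definition of the Luxemburg norm gives the two one-sided facts $\rho_{p(x)q(x)}(u/\lambda')\le 1$ for $\lambda'>\lambda$ and $\rho_{p(x)q(x)}(u/\lambda'')>1$ for $0<\lambda''<\lambda$, and running your factorisation with these yields the upper and lower bounds respectively; your closing fallback invokes only the first, so as written it recovers the upper bounds but not $|v|_{q(x)}\ge\lambda^{p^{+}}$ (resp.\ $\ge\lambda^{p^{-}}$). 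Second, the estimates you derive, $\min(\lambda^{p^{-}},\lambda^{p^{+}})\le\bigl||u|^{p(x)}\bigr|_{q(x)}\le\max(\lambda^{p^{-}},\lambda^{p^{+}})$, are the correct ones (they reduce to Proposition \ref{prop2} when $q\equiv 1$); the statement as printed has $p^{-}$ and $p^{+}$ interchanged in each line, a typo inherited from the literature, so your ``natural reading'' is indeed the right one and worth stating explicitly rather than leaving implicit.
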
	
	
	\begin{proposition}[see \cite{Dien1,Edmu1}] \label{prop5}
		Let $p\in  C_{+}^{\text{log}}(\mathbb{R}^N) $ be such that $p^+<N$ and let  $q\in C(\mathbb{R}^N)$ satisfy $1<q(x)<p^{*}(x)$ for each $x\in \mathbb{R}^N$, then there exists a continuous and compact embedding $W_{b}^{1,p(x)}(\mathbb{R}^N)\hookrightarrow L^{q(x)}(\mathbb{R}^N )$.
	\end{proposition}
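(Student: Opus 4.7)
My plan splits the embedding into a continuity piece, obtained by reduction to the unweighted variable-exponent Sobolev space, and a compactness piece handled by a Rellich--Kondrachov argument on balls together with a tail estimate.

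For the continuous embedding, the excerpt already notes that $\|\cdot\|_{b,p}$ and $\|\cdot\|_{1,p(x)}$ are equivalent norms on $W_b^{1,p(x)}(\mathbb{R}^N)$, which is a direct consequence of $0 < b_0 \leq b(x) \leq \|b\|_{\infty}$ almost everywhere. It therefore suffices to prove the continuous embedding $W^{1,p(x)}(\mathbb{R}^N) \hookrightarrow L^{q(x)}(\mathbb{R}^N)$ under the pointwise bound $1 < q(x) < p^{*}(x)$ with $p \in C_{+}^{\log}(\mathbb{R}^N)$ and $p^{+} < N$. This is the classical variable-exponent Sobolev--Gagliardo--Nirenberg inequality (Diening et al.\ \cite{Dien1}, Edmunds--R\'akosn\'ik), whose proof represents $u$ by the Riesz potential of $\nabla u$ and exploits boundedness of the Hardy--Littlewood maximal operator on $L^{p(x)}$; the log-H\"older hypothesis on $p$ is what makes that maximal bound available.

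For compactness, I take a sequence $\{u_n\}$ bounded in $W_{b}^{1,p(x)}(\mathbb{R}^N)$ and pass to a weakly convergent subsequence $u_n \rightharpoonup u$. For each integer $R \geq 1$, the variable-exponent Rellich--Kondrachov theorem on the bounded smooth domain $B_R$ yields, after a further extraction, strong convergence $u_n \to u$ in $L^{q(x)}(B_R)$. A Cantor diagonal procedure then produces a single subsequence converging to $u$ in $L^{q(x)}(B_R)$ for every $R \in \mathbb{N}$.

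The main obstacle is to control the tail $T_{n,R} := \int_{|x| > R} |u_n - u|^{q(x)}\, dx$ uniformly in $n$ and show $T_{n,R} \to 0$ as $R \to \infty$; this is the step that is well known to fail on all of $\mathbb{R}^N$ under the hypothesis that $b$ is merely bounded above and bounded away from zero. In the radial setting used throughout this paper, I would close the argument via a Strauss-type radial decay lemma adapted to the variable-exponent framework, yielding a pointwise estimate of the form $|u_n(x)| \leq C \|u_n\|_{b,p}\, |x|^{-\alpha}$ for some $\alpha > 0$ on radial functions. Combining this decay with the variable-exponent H\"older inequality (Proposition \ref{prop1}) and the norm--modular relations of Proposition \ref{prop3} bounds $T_{n,R}$ by a quantity independent of $n$ that tends to $0$ as $R \to \infty$. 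This tail control, and the justification that the embedding is genuinely compact rather than just continuous, is the delicate part; the remainder of the proof is routine given the machinery already collected in Propositions \ref{prop1}--\ref{prop3}.
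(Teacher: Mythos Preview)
The paper does not prove Proposition~\ref{prop5}; it is stated with a bare citation to \cite{Dien1,Edmu1}, so there is no ``paper's own proof'' to compare against. Your attempt therefore goes beyond what the authors do.

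That said, you have correctly put your finger on a genuine defect in the statement itself. With $b\in L^{\infty}(\mathbb{R}^N)$ and $b_0=\mathrm{ess\,inf}\,b>0$, the weighted space $W_{b}^{1,p(x)}(\mathbb{R}^N)$ coincides with the unweighted space $W^{1,p(x)}(\mathbb{R}^N)$ with equivalent norms, as the paper notes. But the embedding $W^{1,p(x)}(\mathbb{R}^N)\hookrightarrow L^{q(x)}(\mathbb{R}^N)$ is \emph{not} compact on the whole of $\mathbb{R}^N$ under the sole hypothesis $1<q(x)<p^{*}(x)$: already in the constant-exponent case $p(x)\equiv p$, $q(x)\equiv q$ with $p<q<p^{*}$, translation invariance destroys compactness (take any nonzero $\varphi\in C_c^{\infty}$ and set $u_n(x)=\varphi(x-n e_1)$). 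So the tail estimate you flag as ``the delicate part'' is not merely delicate---it is impossible without extra structure.

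Your proposed rescue via a Strauss-type radial decay lemma is the right idea for the way the result is actually used in the paper (everything ultimately lives in the radial subspace $X_{G_1}$), but it does \emph{not} salvage the proposition as written, which concerns the full space $W_{b}^{1,p(x)}(\mathbb{R}^N)$. The honest conclusion is that Proposition~\ref{prop5} is misstated: compactness holds for the radial subspace $W_{b,G_1}^{1,p(x)}(\mathbb{R}^N)$ (and this is what the authors need), or for bounded domains, or when the weight $b$ is coercive ($b(x)\to\infty$ as $|x|\to\infty$), but not in the generality claimed. The references \cite{Dien1,Edmu1} do not support the compact embedding on $\mathbb{R}^N$ with merely bounded weight; your continuity argument, on the other hand, is fine.
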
	
	
	\begin{proposition}[see \cite{Dien1,Edmu1}] \label{prop4}
		Let $p\in C_{+}^{\text{log}}(\mathbb{R}^N)$. Then, there exists a positive constant $ C^{\ast}$ such that
		\[
		| u| _{p^{\ast }(x)}\leq C^{\ast}\|u\| _{b,p},   \quad \  \hbox{for all} \ 
		u\in W_{b}^{1,p(x)}(\mathbb{R}^N).
		\]
	\end{proposition}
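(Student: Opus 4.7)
The plan is to reformulate problem (\ref{s1.1}) variationally by introducing the energy functional $J : X \times X \to \mathbb{R}$ defined by
\[
J(u,v) = -\widehat{K}\bigl(\mathcal{B}(u)\bigr) + \widehat{K}\bigl(\mathcal{B}(v)\bigr) - \int_{\mathbb{R}^N}\frac{1}{r(x)}\bigl(|u|^{r(x)} + |v|^{r(x)}\bigr)\,dx - \int_{\mathbb{R}^N}\uplambda(x)\mathcal{F}(x,u,v)\,dx,
\]
so that critical points of $J$ correspond exactly to weak solutions of (\ref{s1.1}). Under $(\textbf{\textit{H}}_{a_1})$--$(\textbf{\textit{H}}_{a_3})$, $(\textbf{\textit{K}}_1)$--$(\textbf{\textit{K}}_2)$ and the growth hypothesis $(\textbf{\textit{F}}_1)$, standard Nemytskii-type arguments together with Propositions \ref{prop2}--\ref{prop4} show that $J \in C^1(X\times X,\mathbb{R})$. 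Because of the opposite signs carried by the $u$- and $v$-components, $J$ is strongly indefinite: it is unbounded above on every finite-codimension subspace $\{0\}\times V$ and unbounded below on every finite-codimension subspace $V\times\{0\}$, which precludes a direct application of the Mountain Pass Theorem.

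Since the exponents $p,q,h,r$, the weight $\uplambda$ and the nonlinearity $\mathcal{F}$ are all radially symmetric (conditions following (\ref{s1.1}) and $(\textbf{\textit{F}}_5)$), the group $O(N)$ acts isometrically on $X\times X$ and leaves $J$ invariant. I would therefore invoke the Krawcewicz-Marzantowicz principle of symmetric criticality to reduce the problem to the fixed-point subspace $X_r\times X_r$ of radially symmetric functions, where Proposition \ref{prop5} applied to radial subspaces yields better compactness properties. Combined with the $\mathbb{Z}_2$-equivariance supplied by $(\textbf{\textit{F}}_4)$, this gives a compact Lie group action on $X_r\times X_r$ to which Li's limit index theory applies.

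The central analytic step is to verify a local Palais--Smale condition $(PS)_c^*$ for $J|_{X_r\times X_r}$ below a suitable threshold $c^*$. Given a $(PS)_c$-sequence $\{(u_n,v_n)\}$, the combined use of $(\textbf{\textit{H}}_{a_4})$, $(\textbf{\textit{K}}_2)$ and the Ambrosetti--Rabinowitz-type hypothesis $(\textbf{\textit{F}}_2)$ yields boundedness via the test
\[
\gamma J(u_n,v_n) - \bigl\langle J'(u_n,v_n),\bigl(\tfrac{u_n}{\vartheta(x)},\tfrac{v_n}{\vartheta(x)}\bigr)\bigr\rangle,
\]
which gives a coercive estimate because $\gamma/\alpha_i \le \vartheta^-/p^+$ and $\vartheta^- > h^+$. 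I would then extract weak limits $(u,v)$, apply the concentration-compactness principle (Theorem \ref{ccpp}) to the critical terms $|u_n|^{r(x)}$, $|v_n|^{r(x)}$ and the gradient terms on $\mathcal{C}_h$ to obtain countable atomic decompositions with Sobolev-type atom relations, and use the concentration at infinity version (Theorem \ref{ccppp}) to preclude mass loss as $|x|\to\infty$. The Sobolev constant $C^*$ of Proposition \ref{prop4} furnishes a quantitative lower bound on each atom's mass; choosing $\uplambda_{\star}$ small enough forces the min-max levels built below to lie beneath this threshold, ruling out all atoms and forcing strong convergence in $X_r\times X_r$.

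Finally, I would construct an equivariant increasing sequence of finite-dimensional subspaces of $X_r\times X_r$ adapted to the noncooperative splitting, and define the associated limit-index min-max values $c_k$ in the sense of Li. The geometric hypotheses are verified as follows: $(\textbf{\textit{K}}_1)$ combined with $(\textbf{\textit{H}}_{a_2})$ gives the coercive estimate needed on the ``positive'' subspace, while $(\textbf{\textit{F}}_2)$--$(\textbf{\textit{F}}_3)$ and the critical growth provide the uniform upper bound on the ``negative'' subspace, yielding the linking geometry required by the limit-index theorem; dimension counting shows $c_k\to +\infty$ as $k\to\infty$. An approximation argument (truncating at balls and exploiting the radial compact embedding) then shows $c_k<c^*$ for $\uplambda^+\le\uplambda_{\star}$, so the $(PS)_{c_k}$ condition applies and produces infinitely many pairs of critical values diverging to infinity. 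The main obstacle I expect is the simultaneous handling of concentration at points in $\mathcal{C}_h$ and mass escape to infinity in the presence of the nonlocal Kirchhoff coefficients $K(\mathcal{B}(u))$ and $K(\mathcal{B}(v))$, which couple the two unknowns through $\mathcal{B}$ and obstruct the usual separate atom-by-atom analysis; lifting to the $p(x)$-$q(x)$ setting via the double-phase bounds in $(\textbf{\textit{H}}_{a_2})$ is what makes the concentration analysis tractable.
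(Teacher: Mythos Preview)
Your proposal does not address the statement at all. Proposition~\ref{prop4} is the Sobolev-type embedding inequality
\[
|u|_{p^{\ast}(x)} \leq C^{\ast}\|u\|_{b,p}\quad\text{for all }u\in W_{b}^{1,p(x)}(\mathbb{R}^N),
\]
which is a basic functional-analytic fact cited from Diening et~al.\ and Edmunds--R\'akosn\'{\i}k; the paper does not prove it but simply quotes it as background. What you have written is instead an outline of the proof of the main existence result, Theorem~\ref{theooo}: variational reformulation, symmetric criticality, concentration-compactness, and the limit index theory of Li. None of that machinery is relevant to establishing the embedding $W_{b}^{1,p(x)}(\mathbb{R}^N)\hookrightarrow L^{p^{\ast}(x)}(\mathbb{R}^N)$.

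If you intended to prove Proposition~\ref{prop4}, the argument should be: since $b\in L^{\infty}(\mathbb{R}^N)$ with $b_0=\operatorname{ess\,inf}b>0$, the norms $\|\cdot\|_{b,p}$ and $\|\cdot\|_{1,p(x)}$ are equivalent, and then invoke the standard variable-exponent Sobolev inequality for $p\in C_{+}^{\log}(\mathbb{R}^N)$ with $p^{+}<N$ (e.g.\ \cite[Theorem~8.3.1]{Dien1}). If instead you meant to sketch the proof of Theorem~\ref{theooo}, then your outline is broadly aligned with the paper's approach, though note that the paper handles boundedness of Palais--Smale sequences by testing against $(u_{n_k},0)$ and $(0,v_{n_k}/\vartheta(x))$ separately rather than the combined test vector you propose, and the min-max levels are shown to diverge (not to lie below a fixed threshold) so that the local $(PS)_c$ window is reached for large enough index; but in any case that is a different statement from the one you were asked to prove.
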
	
	
	In the upcoming discussions, we shall work with the product space denoted as
	\[
	Y:=\Big(W^{1,p(x)}_{b}(\mathbb{R}^N)\cap W^{1,h(x)}_{b}(\mathbb{R}^N)\Big)\times\Big(W^{1,p(x)}_{b}(\mathbb{R}^N)\cap W^{1,h(x)}_{b}(\mathbb{R}^N)\Big).
	\]
   This space is endowed with the norm
	\[
	\| (u,v)\|_{Y}:=\max \left\lbrace  \|  u\|_{b,h },\|  v\|_{b,h} \right\rbrace ,~~~~~ \  \hbox{for all} \  (u,v)\in Y,
	\]
	where $\| u\|_{b,h}:= \| u\|_{b,p}+ \mathcal{H}(\kappa_{\star}^3)\| u\|_{b,q}$   represents the norm of  $W^{1,p(x)}_{b}(\mathbb{R}^N)\cap W^{1,h(x)}_{b}(\mathbb{R}^N)$. The space $Y^{*}$ corresponds to the dual space of $Y$ and is endowed with the standard dual norm.	
	
	\begin{definition}
		Consider a Banach space $Y$. An element $(u,v) \in Y$ is said to be a weak solution of the system  \eqref{s1.1} if 	
		\begin{align*}
			- K\left(
			\mathcal{B}(u)\right)\int_{\mathbb{R}^N }\Big( \mathcal{A}_1(\nabla u) .\nabla\tilde u 
			+ b(x)\mathcal{A}_2(u)\tilde u\Big)dx - \int_{\mathbb{R}^N  }|u|^{r(x)-2}u\tilde u\,dx  \\
			+K\left( \mathcal{B}(v)\right)\int_{\mathbb{R}^N }\Big( \mathcal{A}_1(\nabla v) .\nabla\tilde v  +b(x)\mathcal{A}_2(v)\tilde v \Big)dx - \int_{\mathbb{R}^N }|v|^{r(x)-2}v\tilde v\,dx  \\
			-\displaystyle\int_{\mathbb{R}^N }\uplambda(x) \frac{ \partial \mathcal{F}}{\partial u}(x,u,v)\tilde{u}\,dx
			-	\displaystyle\int_{\mathbb{R}^N }\uplambda(x) \frac{ \partial \mathcal{F}}{\partial v}(x,u,v)\tilde{v}\,dx  =0,	\end{align*}
		for all $(\tilde{u},\tilde{v})\in Y= \Big(W^{1,p(x)}_{b}(\mathbb{R}^N)\cap W^{1,h(x)}_{b}(\mathbb{R}^N)\Big)\times\Big(W^{1,p(x)}_{b}(\mathbb{R}^N)\cap W^{1,h(x)}_{b}(\mathbb{R}^N)\Big)$.
	\end{definition}
	
	The energy functional  $\tilde{E}_{\uplambda}:Y\longrightarrow\mathbb{R} $ associated with problem \eqref{s1.1} is given by, 
	\begin{align*}
		\tilde{E}_{\uplambda}(u,v)&=-\widehat{K}\left(
		\mathcal{B}(u(x))\right)+\widehat{K}\left(
		\mathcal{B}(v(x))\right)-\displaystyle\int_{\mathbb{R}^N }\dfrac{1}{r(x)}|u|^{r(x)}dx-\displaystyle\int_{\mathbb{R}^N }\dfrac{1}{r(x)}|v|^{r(x)}dx
		 -\int_{\mathbb{R}^N}\uplambda(x) \mathcal{F}(x,u,v)dx,
	\end{align*}
	for each $(u,v)$ in $Y$.\par	
	
 Through standard calculus, one can establish that, under the above assumptions, the energy functional $\tilde{E}_{\uplambda}:Y\to \mathbb{R}^N$ associated with problem (\ref{s1.1}) is well-defined and belongs to $C^1(Y,\mathbb{R})$. Its derivative, denoted as $\tilde{E}_{\uplambda}'(u,v)$, satisfies
	\begin{align*}
		\left\langle \tilde{E}_{\uplambda}'(u,v),(\tilde{u},\tilde{v})\right\rangle &= 	- K\left( \mathcal{B}(u)\right)\int_{\mathbb{R}^N } \Big( \mathcal{A}_1(\nabla u) .\nabla\tilde u  +  b(x)\mathcal{A}_2(u)\tilde u\Big)dx - \int_{\mathbb{R}^N  }|u|^{r(x)-2}u\tilde u\,dx \\
		&\quad	+ K\left(
		\mathcal{B}(v)\right)\int_{\mathbb{R}^N } \Big(\mathcal{A}_1(\nabla v) .\nabla\tilde v 
		+  b(x)\mathcal{A}_2(v)\tilde v\Big)dx - \int_{\mathbb{R}^N  }|v|^{r(x)-2}v\tilde v\,dx \\
		&\quad  -\displaystyle\int_{\mathbb{R}^N }\uplambda(x) \frac{ \partial \mathcal{F}}{\partial u}(x,u,v)\tilde{u}\,dx-\displaystyle\int_{\mathbb{R}^N }\uplambda(x) \frac{ \partial \mathcal{F}}{\partial v}(x,u,v)\tilde{v}\,dx,
	\end{align*}
   for all $(\tilde{u},\tilde{v})\in Y$. Consequently, the critical points of the functional $\tilde{E}_{\uplambda}$ correspond to weak solutions of the system \eqref{s1.1}. \par
	
	To establish our existence result, we need to address the loss of compactness in the inclusion $W_{b}^{1,p(x)}(\mathbb{R}^N)\hookrightarrow L^{p^\star(x)}(\mathbb{R}^N)$. As a consequence, we can no longer expect the Palais–Smale condition to hold uniformly. However,  
	 we can prove a local Palais–Smale condition that will hold for $E_{\uplambda}(u,v)$ below a certain value of energy, by using the principle of concentration compactness for the weighted-variable exponent Sobolev space $W^{1,p(x)}_{b}(\mathbb{R}^N)$. For  reader's convenience, we state this result in order to prove Theorem \ref{theooo}, see Fu and Zhang \cite[ Theorem 2.2]{Fu1} for the proof.\par 
	 Let us recall that $\mathcal{M}_{B}(\mathbb{R}^N)$ denotes the space of finite nonnegative Borel measures on $\mathbb{R}^N$. For any  $\upnu \in \mathcal{M}_{B}(\mathbb{R}^N)$, we have$\upnu (\mathbb{R}^N)=\left\| \upnu\right\| $. We say that $\upnu_n \overset{\ast}{\rightharpoonup} \upnu$ weakly-* in $\mathcal{M}_{B}(\mathbb{R}^N)$ if, as $n\to \infty$, we have 	$(\upnu_n,\upxi)\to (\upnu,\upxi)$ for all $\upxi \in C_0(\mathbb{R}^N)$. Therefore, just as in Fu and Zhang \cite[ Theorem 2.2]{Fu1}, we can readily deduce the following.
	\begin{theorem}[]\label{ccpp}
		Consider  $h\in  C_{+}^{\text{log}}(\mathbb{R}^N) $ and $q\in C(\mathbb{R}^N)$  such that
		$$
		1<\inf_{x\in\mathbb{R}^N}p(x)\le \sup_{x\in\mathbb{R}^N}h(x) < N \quad
		\text{and}\quad 1\le r(x)\le h^*(x)\quad \text{for all $x$ in }	\mathbb{R}^N.
		$$
		Let $\{u_n\}_{n\in\mathbb{N}}$ be a sequence that weakly converges in
		$W^{1,h(x)}_{b}(\mathbb{R}^N)$ to $u$, and such that $\|u_n\|_{b,h}\leq 1$. The sequence satisfies the following conditions as $n\to \infty$
		
		\begin{itemize}
			\item $|\nabla u_n|^{h(x)}+ b(x)|u_n|^{h(x)}\overset{\ast}{\rightharpoonup} \upmu$
			in $\mathcal{M}_{B}(\mathbb{R}^N)$.
			
			\item $|u_n|^{r(x)}\overset{\ast}{\rightharpoonup} \upnu$  
			in $\mathcal{M}_{B}(\mathbb{R}^N)$.
		\end{itemize}
		as $n\to \infty$.  	Additionally, suppose that $\mathcal{C}_{h}= \{x\in 	\mathbb{R}^N\colon r(x)=h^*(x)\}$ is
		nonempty. Then for some countable index set $I$, we have
		
		\begin{gather} 
				\upmu = |\nabla u|^{h(x)} + b(x) | u|^{h(x)}+\sum_{i\in I} \upmu_i \delta_{x_i} +\tilde{\upmu},   \quad \upmu (\mathcal{C}_{h})\leq 1;\label{cp12}\\
			\upnu=|u|^{r(x)} + \sum_{i\in I}\upnu_i\delta_{x_i},\quad \upnu(\mathcal{C}_{h})\leq S;\label{cp11}
		\end{gather}
		with $$S=\sup\left\lbrace \int_{\mathbb{R}^N}\left| u\right|^{r(x)}dx: u\in W^{1,h(x)}_{b}, \left\| u\right\|_{b,h} \leq 1\right\rbrace,
		\
		\{x_i\}_{i\in I}\subset \mathcal{C}_{h},
		$$  and  $\left\lbrace \upmu_i\right\rbrace ,\left\lbrace \upnu_i\right\rbrace\subset [ 0,\infty)$,  $\delta_{x_i}$ is the Dirac mass at  $x_i \in \mathcal{C}_{h}$,  and $\tilde{\upmu} \in \mathcal{M}_{B}(\mathbb{R}^N)$ is a
		non-atomic non-negative measure. The atoms and the regular part satisfy the
		generalized Sobolev inequality 
		$$ \upnu(\mathcal{C}_{h})\leq 2^{(h^+r^+)/h^-} C^{\ast} \max\left\lbrace \upmu(\mathcal{C}_{h})^{r^+/h^-},\upmu(\mathcal{C}_{h})^{r^-/h^+}\right\rbrace 
		$$
		and
		$$\upnu_i\leq C^{\ast}\max \left\lbrace \upmu_i^{r^+/h^-}, \upmu_i^{r^-/h^+} \right\rbrace .$$
	
	\end{theorem}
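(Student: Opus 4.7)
The plan is to combine three ingredients: (i) the principle of symmetric criticality of Krawcewicz and Marzantowicz, to reduce (\ref{s1.1}) to a radial setting in which subcritical embeddings are compact; (ii) the limit index theory of Li, to extract infinitely many min--max critical values of the strongly indefinite functional $\tilde{E}_{\uplambda}$; and (iii) Theorem \ref{ccpp} together with its counterpart at infinity, to verify a local Palais--Smale condition below an explicit energy threshold $c^{\ast}$. To begin, I would exploit the $O(N)$-symmetry coming from $(\textbf{\textit{F}}_5)$ and the radial symmetry of $\uplambda, p, q, h, r$: the diagonal action $(g\cdot(u,v))(x)=(u(g^{-1}x),v(g^{-1}x))$ is isometric on $Y$ and leaves $\tilde{E}_{\uplambda}$ invariant, so by symmetric criticality it suffices to study critical points of $\tilde{E}_{\uplambda}$ restricted to the radial subspace $Y_r = X_r \times X_r$, where the embedding $X_r \hookrightarrow L^{s(x)}(\mathbb{R}^N)$ is compact for every continuous radial $s$ with $h^{+} < s^{-} \le s^{+} < \inf_{x} h^{*}(x)$. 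In particular, by $(\textbf{\textit{F}}_1)$, the coupling $\int_{\mathbb{R}^N}\uplambda(x)\mathcal{F}(x,u,v)\,dx$ is sequentially weakly continuous on $Y_r$, while the even symmetry $(\textbf{\textit{F}}_4)$ supplies the $\mathbb{Z}_2$-action required for the index theory.

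Because of the sign of $\widehat{K}(\mathcal{B}(u))$ in $\tilde{E}_{\uplambda}$, the functional is strongly indefinite (negative on $X_r \times \{0\}$ away from the origin and positive on $\{0\} \times X_r$). I would fix an increasing sequence $\{V_j\}$ of finite-dimensional $\mathbb{Z}_2$-invariant subspaces of $X_r$ with $\overline{\bigcup_{j} V_j} = X_r$, set $Y^{-} = X_r \times \{0\}$ and $Y^{+} = \{0\} \times X_r$, and define, in the framework of Li, the limit min--max values
\begin{equation*}
c_k := \inf_{A \in \Sigma_k} \sup_{(u,v) \in A} \tilde{E}_{\uplambda}(u,v),
\end{equation*}
where $\Sigma_k$ is the class of closed, $\mathbb{Z}_2$-invariant subsets of $Y_r$ of limit index at least $k$. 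Li's abstract theorem then guarantees that every finite $c_k$ at which the $(PS)_{c_k}^{\ast}$ condition holds is a critical value of $\tilde{E}_{\uplambda}|_{Y_r}$.

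The analytic heart of the argument is the local Palais--Smale condition. Given a $(PS)_c$ sequence $(u_n,v_n)$, I would form the combination $\tilde{E}_{\uplambda}(u_n,v_n) - \tfrac{1}{\vartheta^{-}}\langle \tilde{E}_{\uplambda}'(u_n,v_n),(-u_n,v_n)\rangle$ and use $(\textbf{\textit{K}}_2)$, $(\textbf{\textit{H}}_{a_4})$, and the Ambrosetti--Rabinowitz-type inequality $(\textbf{\textit{F}}_2)$ to extract coercive control of $\|(u_n,v_n)\|_Y$, obtaining boundedness. Up to a subsequence $(u_n,v_n) \rightharpoonup (u,v)$ in $Y_r$; the subcritical coupling passes to the limit by weak continuity, while the critical densities $|u_n|^{r(x)}$ and $|\nabla u_n|^{h(x)} + b(x)|u_n|^{h(x)}$ (and likewise for $v_n$) are handled by Theorem \ref{ccpp}, with escape to infinity controlled by the concentration--compactness principle at infinity. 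Testing $\tilde{E}_{\uplambda}'(u_n,v_n)$ against cut-offs centered at each atom $x_i \in \mathcal{C}_h$, together with the Sobolev bound $\nu_i \le C^{\ast}\max\{\mu_i^{r^{+}/h^{-}},\mu_i^{r^{-}/h^{+}}\}$ from Theorem \ref{ccpp}, yields a dichotomy $\nu_i = 0$ or $\nu_i \ge \eta_0$ for an explicit $\eta_0 > 0$ depending only on $p^{\pm}, q^{\pm}, h^{\pm}, r^{\pm}, \mathfrak{K}_0, \gamma, C^{\ast}$; translating the latter alternative back into the energy produces a threshold $c^{\ast} > 0$ such that concentration is impossible whenever $c < c^{\ast}$, and strong convergence follows.

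To close the argument I would estimate the $c_k$: on each finite-dimensional block entering the construction of $\Sigma_k$ all norms are equivalent, so $(\textbf{\textit{F}}_1)$ yields $|c_k| \le C_k\,\uplambda^{+}$ in a suitable range of $k$; choosing $\uplambda_{\star} > 0$ small enough forces infinitely many of the $c_k$ to lie in $(-c^{\ast},c^{\ast})$ and to be pairwise distinct, and each is then a critical value of $\tilde{E}_{\uplambda}|_{Y_r}$, yielding, via symmetric criticality, a weak solution of (\ref{s1.1}) in $X \times X$. The main obstacle throughout is the interplay between the nonlocal Kirchhoff coefficients $K(\mathcal{B}(u))$ and $K(\mathcal{B}(v))$ and the atomic concentration analysis: these coefficients depend on the full Sobolev norms of each component, so they must be propagated carefully through every cut-off computation while also being controlled consistently with the $Y^{-} \oplus Y^{+}$ splitting required by the limit index, and the boundedness-of-$(PS)$ estimate must absorb both their growth from above via $(\textbf{\textit{K}}_2)$ and their uniform positive lower bound from $(\textbf{\textit{K}}_1)$.
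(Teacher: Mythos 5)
Your proposal does not prove the statement at hand. The statement is Theorem \ref{ccpp} itself, the concentration--compactness principle in the weighted variable exponent space $W^{1,h(x)}_{b}(\mathbb{R}^N)$: one must show that the weak-$*$ limit measures $\upmu$ and $\upnu$ of $|\nabla u_n|^{h(x)}+b(x)|u_n|^{h(x)}$ and $|u_n|^{r(x)}$ decompose as $\upmu=|\nabla u|^{h(x)}+b(x)|u|^{h(x)}+\sum_{i\in I}\upmu_i\delta_{x_i}+\tilde{\upmu}$ and $\upnu=|u|^{r(x)}+\sum_{i\in I}\upnu_i\delta_{x_i}$ with a countable set of atoms located in the critical set $\mathcal{C}_h=\{x: r(x)=h^*(x)\}$, and that the atoms and the total masses satisfy the generalized Sobolev inequalities $\upnu_i\leq C^{\ast}\max\{\upmu_i^{r^+/h^-},\upmu_i^{r^-/h^+}\}$ and $\upnu(\mathcal{C}_h)\leq 2^{(h^+r^+)/h^-}C^{\ast}\max\{\upmu(\mathcal{C}_h)^{r^+/h^-},\upmu(\mathcal{C}_h)^{r^-/h^+}\}$. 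Instead, you sketched the proof of the paper's main multiplicity result (Theorem \ref{theooo}): symmetric criticality, the limit index theory of Li, the boundedness of Palais--Smale sequences, and the energy threshold. In that sketch you explicitly invoke Theorem \ref{ccpp} as a tool (``the critical densities \dots are handled by Theorem \ref{ccpp}''), i.e.\ you assume precisely what was to be proved, so nothing in your text establishes the measure decomposition or the reverse-Sobolev bounds.

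For comparison, the paper handles this statement by deducing it directly from the variable exponent concentration--compactness principle of Fu and Zhang for $\mathbb{R}^N$ (their Theorem 2.2 in \cite{Fu1}), the weighted norm $\|\cdot\|_{b,h}$ being equivalent to the unweighted one since $0<b_0\leq b\in L^{\infty}(\mathbb{R}^N)$. A self-contained blind proof would have to run the Lions-type argument in this setting: test with cut-off functions $\phi$ around candidate concentration points, use the Sobolev embedding constant $C^{\ast}$ of Proposition \ref{prop4} to get a reverse H\"older inequality $\left(\int \phi\, d\upnu\right)^{1/r}\lesssim \left(\int \phi\, d\upmu\right)^{1/h}$ (with the $\max$/exponent bookkeeping caused by the variable exponents, which is where the factors $r^{\pm}/h^{\mp}$ and $2^{(h^+r^+)/h^-}$ come from), deduce that $\upnu-|u|^{r(x)}$ is purely atomic, and use local compactness of the embedding away from $\mathcal{C}_h$ (where $r(x)<h^*(x)$) to place the atoms in $\mathcal{C}_h$. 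None of these steps appears in your proposal, so as an argument for Theorem \ref{ccpp} it has a complete gap; what you wrote belongs to the proof of Theorem \ref{theooo} in Section \ref{sect3}, not to this theorem.
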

	
   Theorem \ref{ccpp} does not account for the potential mass loss at infinity within a weakly convergent sequence. Subsequently, Theorem \ref{ccppp} quantifies this occurrence. In a manner reminiscent of Fu and Zhang \cite[Theorem 2.5]{Fu1}, we deduce the following outcomes.
	\begin{theorem}[]\label{ccppp}
		Assuming  $\mathcal{C}_{h}= \{x\in 	\mathbb{R}^N\colon r(x)=h^*(x)\}$, let  $\{u_n\}_{n\in\mathbb{N}}$ be a sequence that weakly converges in $W^{1,h(x)}_{b}(\mathbb{R}^N)$ to $u$, and such that :
		\begin{itemize}
			\item $|\nabla u_n|^{h(x)}+ b(x)|u_n|^{h(x)}\overset{\ast}{\rightharpoonup}\upmu$
			in $\mathcal{M}_{B}(\mathbb{R}^N)$ .
			
			\item $|u_n|^{r(x)}\overset{\ast}{\rightharpoonup}\upnu$ 
			in $\mathcal{M}_{B}(\mathbb{R}^N)$ .
		\end{itemize}
	We define the quantities: 
		$$ \upmu_{\infty}=\lim_{\left\lbrace x\in \mathbb{R}^N ; 
			|x|>R\right\rbrace }\limsup_{n\to +\infty }\int_{\left\lbrace x\in \mathbb{R}^N ; |x|>R\right\rbrace} ( |\nabla u_n|^{h(x)}+ b(x)|u_n|^{h(x)})dx$$
		and 
		$$ \upnu_{\infty}=\lim_{R\to \infty}\limsup_{n\to +\infty }\int_{\left\lbrace x\in \mathbb{R}^N ; 
			|x|>R\right\rbrace } |u_n|^{r(x)}dx.$$
		The quantities $\upmu_{\infty}$ and $\upnu_{\infty}$ are well defined and satisfy		
		$$\limsup_{n\to +\infty }\int_{\mathcal{C}_{h}} ( |\nabla u_n|^{h(x)}+ b(x)|u_n|^{h(x)})dx= \int_{\mathcal{C}_{h}}d\upmu +\upmu_{\infty}
		\
\hbox{and}
\ 
\limsup_{n\to +\infty }\int_{\mathcal{C}_{h}}  |u_n|^{r(x)}dx= \int_{\mathcal{C}_{h}}d\upnu +\upnu_{\infty}.$$
		
		Additionally,  the following inequality holds
		$$\upnu_\infty\leq C^{\ast}\max \left\lbrace \upmu_\infty^{r^+/h^-}, \upmu_\infty^{r^-/h^+} \right\rbrace .$$
	\end{theorem}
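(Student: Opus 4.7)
The plan is to adapt the classical Chou--Chu / Bianchi--Chabrowski concentration-at-infinity argument to the weighted variable-exponent setting, using the Sobolev-type embedding $W^{1,h(x)}_{b}(\mathbb{R}^N)\hookrightarrow L^{h^{\ast}(x)}(\mathbb{R}^N)$ (the $h$-analogue of Proposition~\ref{prop4}) in place of its classical counterpart, and controlling variable-exponent bookkeeping through Proposition~\ref{prop2}.

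First, I would verify that $\upmu_\infty$ and $\upnu_\infty$ are well defined. For each fixed $n$, the maps $R\mapsto\int_{\{|x|>R\}}(|\nabla u_n|^{h(x)}+b(x)|u_n|^{h(x)})\,dx$ and $R\mapsto\int_{\{|x|>R\}}|u_n|^{r(x)}\,dx$ are nonincreasing and nonnegative, so their $\limsup$ in $n$ is a nonincreasing nonnegative function of $R$ whose limit as $R\to\infty$ exists; finiteness follows from the weak convergence of $\{u_n\}$ in $W^{1,h(x)}_{b}(\mathbb{R}^N)$ together with Proposition~\ref{prop2}. For the two mass-splitting identities, pick cut-offs $\phi_R\in C_c^{\infty}(\mathbb{R}^N)$ with $0\le\phi_R\le 1$, $\phi_R\equiv 1$ on $B_R$, $\operatorname{supp}\phi_R\subset B_{2R}$. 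Decomposing
$$\int_{\mathcal{C}_h}|u_n|^{r(x)}\,dx=\int_{\mathcal{C}_h}\phi_R|u_n|^{r(x)}\,dx+\int_{\mathcal{C}_h}(1-\phi_R)|u_n|^{r(x)}\,dx,$$
the first term converges as $n\to\infty$ (by weak-$\ast$ convergence $|u_n|^{r(x)}\overset{\ast}{\rightharpoonup}\upnu$ tested against $\mathbf{1}_{\mathcal{C}_h}\phi_R$, a compactly supported upper semicontinuous function) to $\int_{\mathcal{C}_h}\phi_R\,d\upnu$, while the second is squeezed between $\int_{|x|>2R}|u_n|^{r(x)}\,dx$ and $\int_{|x|>R}|u_n|^{r(x)}\,dx$. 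Taking $\limsup_n$ and then $R\to\infty$, with monotone convergence on $\int_{\mathcal{C}_h}\phi_R\,d\upnu$, gives the identity; the $\upmu$-case is analogous.

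Second, for the inequality, I introduce complementary cut-offs $\psi_R\in C^{\infty}(\mathbb{R}^N)$ with $\psi_R\equiv 0$ on $B_R$, $\psi_R\equiv 1$ outside $B_{2R}$, and $|\nabla\psi_R|\le C/R$. Since $\psi_R u_n\in W^{1,h(x)}_{b}(\mathbb{R}^N)$, the Sobolev embedding yields
$$\bigl|\psi_R u_n\bigr|_{h^{\ast}(x)}\le C^{\ast}\,\|\psi_R u_n\|_{b,h}.$$
On $\mathcal{C}_h$ we have $r(x)=h^{\ast}(x)$; raising this inequality to the modular level via the dichotomy of Proposition~\ref{prop2} produces
$$\int_{\mathcal{C}_h}|\psi_R u_n|^{r(x)}\,dx\le C^{\ast}\max\bigl\{M_n^{r^+/h^-},M_n^{r^-/h^+}\bigr\},\qquad M_n:=\int\bigl(|\nabla(\psi_R u_n)|^{h(x)}+b(x)|\psi_R u_n|^{h(x)}\bigr)\,dx.$$
Expanding via Leibniz, $|\nabla(\psi_R u_n)|^{h(x)}\le 2^{h^+-1}\bigl(\psi_R^{h(x)}|\nabla u_n|^{h(x)}+|\nabla\psi_R|^{h(x)}|u_n|^{h(x)}\bigr)$, which splits $M_n$ into a principal part pinched between $\int_{|x|>2R}$ and $\int_{|x|>R}$ of $(|\nabla u_n|^{h(x)}+b(x)|u_n|^{h(x)})\,dx$, plus a commutator-type remainder supported in the annulus $B_{2R}\setminus B_R$.

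Third, the commutator $\int_{B_{2R}\setminus B_R}|\nabla\psi_R|^{h(x)}|u_n|^{h(x)}\,dx\le (C/R)^{h^-}\int_{B_{2R}\setminus B_R}|u_n|^{h(x)}\,dx$ is handled by the compact embedding of Proposition~\ref{prop5}: $u_n\to u$ strongly in $L^{h(x)}(B_{2R})$, so the $\limsup_n$ is at most $(C/R)^{h^-}\int_{B_{2R}\setminus B_R}|u|^{h(x)}\,dx\to 0$ as $R\to\infty$ because $u\in L^{h(x)}(\mathbb{R}^N)$. The principal part's iterated limit is exactly $\upmu_\infty$ after $n\to\infty$, $R\to\infty$. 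Meanwhile, the left-hand side dominates $\int_{|x|>2R}|u_n|^{r(x)}\,dx$, whose iterated limit is $\upnu_\infty$. Combining yields $\upnu_\infty\le C^{\ast}\max\{\upmu_\infty^{r^+/h^-},\upmu_\infty^{r^-/h^+}\}$. The main obstacle is precisely this variable-exponent bookkeeping: unlike in constant-exponent arguments, one cannot raise the Sobolev inequality to a single power, and the modular--Luxemburg dichotomy of Proposition~\ref{prop2} forces the two competing exponents $r^+/h^-$ and $r^-/h^+$ to appear in the final estimate, demanding separate analysis of the cases $M_n\ge 1$ and $M_n<1$ (equivalently $\upmu_\infty\ge 1$ and $\upmu_\infty<1$) when passing to the limit.
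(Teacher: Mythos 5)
Your overall route---exterior cut-offs $\psi_R$, the Sobolev embedding applied to $\psi_R u_n$, conversion between norm and modular via Proposition \ref{prop2}, and a commutator term that vanishes as $R\to\infty$---is exactly the Fu--Zhang-type argument that the paper itself invokes (the paper gives no written proof of this theorem; it simply defers to Fu and Zhang's Theorem 2.5 and adapts the statement to the weighted space). However, one step in your derivation of the final inequality fails as written. You estimate $\int_{\mathcal{C}_h}|\psi_R u_n|^{r(x)}\,dx$ through the Sobolev/modular bound, using $r(x)=h^{*}(x)$ on $\mathcal{C}_h$, and then assert that this left-hand side dominates $\int_{\{|x|>2R\}}|u_n|^{r(x)}\,dx$, whose iterated limit is $\upnu_\infty$. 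It does not: being an integral over $\mathcal{C}_h$, it only dominates $\int_{\mathcal{C}_h\cap\{|x|>2R\}}|u_n|^{r(x)}\,dx$. Since $\upnu_\infty$ is defined through integrals over the full exterior sets $\{|x|>R\}$, and on an unbounded domain mass can escape to infinity also in the region where $r(x)<h^{*}(x)$ (loss of compactness at infinity comes from unboundedness, not from criticality of the exponent), your chain only controls the portion of the mass at infinity carried by $\mathcal{C}_h$. The repair is to drop the restriction to $\mathcal{C}_h$ altogether: because $h(x)\le r(x)\le h^{*}(x)$ holds on all of $\mathbb{R}^N$, the embedding $W^{1,h(x)}_{b}(\mathbb{R}^N)\hookrightarrow L^{r(x)}(\mathbb{R}^N)$ is continuous (equivalently, the constant $S$ of Theorem \ref{ccpp} is finite), so apply it to $\psi_R u_n$ and bound the full modular $\int_{\mathbb{R}^N}|\psi_R u_n|^{r(x)}\,dx\ \ge\ \int_{\{|x|>2R\}}|u_n|^{r(x)}\,dx$; the rest of your limit passage then goes through unchanged.

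Two smaller points. First, in the mass-splitting identities you test the weak-$\ast$ convergence $|u_n|^{r(x)}\overset{\ast}{\rightharpoonup}\upnu$ against $\mathbf{1}_{\mathcal{C}_h}\phi_R$; weak-$\ast$ convergence in $\mathcal{M}_{B}(\mathbb{R}^N)$ only permits continuous test functions, and for the upper semicontinuous function $\mathbf{1}_{\mathcal{C}_h}\phi_R$ you obtain merely a one-sided limsup inequality, not convergence, unless $\upnu$ gives no mass to $\partial\mathcal{C}_h$. This is harmless if the identities are read, as in Fu--Zhang, with $\mathbb{R}^N$ in place of $\mathcal{C}_h$ (then $\phi_R$ alone is an admissible test function and your squeezing argument is complete), but for the statement as literally written your argument does not deliver equality. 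Second, the crude Leibniz bound $|\nabla(\psi_R u_n)|^{h(x)}\le 2^{h^+-1}\bigl(\psi_R^{h(x)}|\nabla u_n|^{h(x)}+|\nabla\psi_R|^{h(x)}|u_n|^{h(x)}\bigr)$ leaves a factor $2^{h^+-1}$ inside $M_n$ that you never remove, so your final inequality holds only with a constant strictly larger than $C^{\ast}$; to recover the stated constant, use instead $|\nabla(\psi_R u_n)|^{h(x)}\le(1+\varepsilon)\psi_R^{h(x)}|\nabla u_n|^{h(x)}+C_\varepsilon|\nabla\psi_R|^{h(x)}|u_n|^{h(x)}$ and let $\varepsilon\to0$ after sending $R\to\infty$ (or note explicitly that the constant is not claimed to be sharp).
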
	
	
\subsection{Limit index theory}\label{subsect22}

In this subsection, we shall introduce the Limit index theory due to Li \cite{Li1}. In order to do that, we recall the following definitions (the interested readers can refer to  Szulkin \cite{SZULIUN} and Willem \cite{Willem}).
\begin{definition}[see \cite{Li1}]
	
	The action of a topological group $G$  on a normed space $Z$ is a continuous map $ G\times Z\to Z:\left[ g,z\right] \mapsto gz$
	such that  
	$$1.z=z, \quad (gh)z=g(hz), \qquad z\mapsto gz \text{ is linear }  \  \hbox{for all} \  g,h \in G.$$
	The action is isometric if $\left\|gz \right\|=\left\|z \right\|, \ \  \hbox{for all} \  g\in G, z\in Z $ in which case, $Z$ is called the $G$-space.
	The set of invariant points is defined by
	$$\text{Fix}G:= \left\lbrace  z \in Z : gz=z \text{ for all } g\in G\right\rbrace. $$
	A set $A  \subset Z$  is invariant if $gA=A$ for every $g\in G$. A function $\varphi :Z\to R$ is invariant if $\varphi\circ g=\varphi$ for every $g\in G, z\in Z.$ 	
	A map $f:Z\to Z$  is equivariant if $g\circ f= f\circ g \text{ for every } g\in G.$
	
		\noindent
	Suppose that $Z$ is a $G$-Banach space, that is, there is a $G$ isometric action on $Z$. Let 	
	$$\Sigma:= \left\lbrace A\subset Z: A \text{ is closed and } gA=A, \text{ for all } g\in G \right\rbrace $$
	be a family of all $G$-invariant closed subset of $Z$, and let 
	$$h:= \left\lbrace  h \in C^{0}(Z,Z) : h(gu)=g(hu), \text{ for all } g \in G\right\rbrace$$
	be the class of all $G$-equivariant mappings of $Z$. Finally, the set
	$O(u):=\left\lbrace gu:g\in G\right\rbrace $
	is called the
	$G$-Orbit of $u$.
\end{definition}

\begin{definition}[see \cite{Li1}]\label{diif}
	An index for $(G,\Sigma,h)$ is a mapping $i: \Sigma \to \mathbb{Z}_+\cup\left\lbrace +\infty\right\rbrace ,$
	where $\mathbb{Z}_+$is the set of all nonnegative integers,
	 such that for all $A, B\in \Sigma, h\in h$,the following conditions are satisfied
	
	\begin{enumerate}
		\item $i(A)=0 \Leftrightarrow A=\emptyset$;
		\item (Monotonicity) $A \subset B\Rightarrow i(A)\leq i(B)$;
		\item (Subadditivity) $i(A\cup B)\leq i(A)+i(B)$;
		\item (Supervariance) $i(A) \leq i(\overline{h(A)}), \  \hbox{for all} \  h \in h$;
		\item (Continuity) If $A$ is compact and $A\cap \text{Fix}G=\emptyset$, then $i(A)<+\infty$ and there is a $G$-invariant neighborhood $N$ of $A$ such that $i(\overline{N})=i(A)$;
		\item (Normalization) If $x\notin \text{Fix}G$, then $i(O(x))=1$.
	\end{enumerate}
\end{definition}

\begin{definition}[see \cite{Benci}]\label{difff}
	An index theory is said to satisfy the $d$-dimension property if there is a positive integer $d$ such that
	$i(V^{dk}\cap S_{1}(0))=k,$
	for all $dk$-dimensional subspaces $V^{dk}\in \Sigma$ such  that $V^{dk}\cap\text{Fix}G=\left\lbrace 0\right\rbrace$, where $S_{1}(0))$ is the unit sphere in $Z$.
	
	\noindent
	Suppose $U$ and $V$ are $G$-invariant closed subspaces of $Z$ such that $Z=U\oplus V$, where $V$ is infinite dimensional and
	$$ V=\overline{\bigcup_{j=1}^{\infty}V_j},$$
	where $V_j$ is $dn_j$-dimensional $G$-invariant subspaces of $V$,
	$j=1,2,...$, and $V_1\subset V_2 \subset \dots \subset V_n \subset\dots $. Let 
	$Z_j= U\oplus V_j$
	and for all $A\in \Sigma$, let $A_j = A\cap Z_j.$
\end{definition}
\begin{definition} [see \cite{Li1}]\label{diiif}
	Let $i$ be an index theory satisfying the $d$-dimension property. A limit index with respect to $(Z_j)$ induced by $i$ is a mapping
	$i^{\infty}: \Sigma\to \mathbb{Z}\cup \left\lbrace -\infty;+\infty \right\rbrace ,$
	given by $i^{\infty}(A)=\limsup_{j\to \infty}(i(A_j)-n_j).$	
\end{definition}
\begin{proposition}[see \cite{Li1}]
	Let $A, B\in \Sigma$. Then $i^{\infty}$ satisfies the following:
	\begin{enumerate}
		\item $A=\emptyset \Rightarrow i^{\infty}=-\infty$;
		\item (Monotonicity) $A \subset B\Rightarrow i^{\infty}(A)\leq i^{\infty}(B)$;
		\item (Subadditivity) $i^{\infty}(A\cup B)\leq i^{\infty}(A)+i^{\infty}(B)$;
		
		\item  If $ V\cap \text{Fix}G=\left\lbrace 0\right\rbrace$, then 
		$i^{\infty}(S_{\rho}(0)\cap V)=0$, where $S_{\rho}(0)=\left\lbrace z\in Z: \left\| z\right\| =\rho \right\rbrace $;
		
		\item If $Y_0$ and $\tilde{Y}_0$ are $G$-invariant closed subspaces of $V$ such that $V=Y_0\oplus\tilde{Y}_0$, $\tilde{Y}_0 \subset V_{j_0}$ for some $j_0$ and $\dim\tilde{Y}_0=dm$, then $i^{\infty}(S_{\rho}(0)\cap Y_0)\geq -m$.
	\end{enumerate}
\end{proposition}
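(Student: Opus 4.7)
The plan is to prove each of the five assertions by unfolding the definition $i^{\infty}(A)=\limsup_{j\to\infty}(i(A_j)-n_j)$, where $A_j=A\cap Z_j$ and $Z_j=U\oplus V_j$, and then invoking the axioms of the underlying index $i$ in Definition~\ref{diif} together with the $d$-dimension property from Definition~\ref{difff}. The geometric fact that drives (4) and (5) is $V\cap Z_j=V_j$ for every $j$, which follows by writing any $z\in V\cap Z_j$ as $z=u+w$ with $u\in U$, $w\in V_j\subset V$, and observing that $u=z-w\in U\cap V=\{0\}$, so $z=w\in V_j$. Since $V$ is infinite dimensional, $n_j\to\infty$ (after passing to a subsequence if required).

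Properties (1) and (2) follow at once: for (1), $A=\emptyset$ gives $A_j=\emptyset$ and hence $i(A_j)-n_j=-n_j\to-\infty$; for (2), $A\subset B$ implies $A_j\subset B_j$, and monotonicity of $i$ passes to the limsup. For (4), with $A=S_{\rho}(0)\cap V$, the key geometric fact gives $A_j=S_{\rho}(0)\cap V_j$, a sphere in the $dn_j$-dimensional $G$-invariant subspace $V_j$ with $V_j\cap\mathrm{Fix}\,G\subset V\cap\mathrm{Fix}\,G=\{0\}$. The $d$-dimension property (applied after the equivariant homeomorphism $z\mapsto z/\rho$ from $S_{\rho}(0)\cap V_j$ onto $S_1(0)\cap V_j$) yields $i(A_j)=n_j$, so $i(A_j)-n_j=0$ for every $j$ and $i^{\infty}(A)=0$. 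For (5), set $A=S_{\rho}(0)\cap Y_0$. For $j\ge j_0$ we have $\tilde Y_0\subset V_{j_0}\subset V_j\subset V=Y_0\oplus\tilde Y_0$, so a direct verification gives the splitting $V_j=(V_j\cap Y_0)\oplus\tilde Y_0$; hence $V_j\cap Y_0$ is a $G$-invariant subspace of dimension $d(n_j-m)$ whose intersection with $\mathrm{Fix}\,G$ is trivial. Since $A_j=S_{\rho}(0)\cap V_j\cap Y_0$, the $d$-dimension property yields $i(A_j)=n_j-m$, so $i(A_j)-n_j=-m$ for all $j\ge j_0$ and $i^{\infty}(A)=-m\ge-m$.

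The step I expect to be the main obstacle is the subadditivity (3). The natural bound from the underlying subadditivity is $i(A_j\cup B_j)\le i(A_j)+i(B_j)$; combined with monotonicity $i(B_j)\le i(B)$ and subtraction of $n_j$, this yields $i^{\infty}(A\cup B)\le i^{\infty}(A)+i(B)$, where the second summand is the standard (non-limit) index. This is the form I would actually establish, since it is the standard formulation used in the limit-index framework of Li \cite{Li1} and subsequent works such as Chems Eddine \cite{chems34}; a literal reading of (3) with $i^{\infty}(B)$ in place of $i(B)$ is incompatible with taking $B=\emptyset$, where (1) forces $i^{\infty}(B)=-\infty$ and the inequality would collapse. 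I would therefore present the proof in this standard form and comment on the interpretation.
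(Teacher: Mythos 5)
The paper itself does not prove this proposition: it is quoted from Li \cite{Li1}, so your argument has to be measured against the standard proof in that reference, and for items (1), (2) and (4) it is essentially that proof — the identity $V\cap Z_j=V_j$, the rescaling $z\mapsto z/\rho$ combined with the supervariance axiom to reduce to the unit sphere, and the $d$-dimension property giving $i(A_j)=n_j$, are exactly the right ingredients. Your reading of (3) is also the correct one: as printed, with $i^{\infty}(B)$ on the right-hand side, the inequality fails already for $B=\emptyset$ (item (1) would force $i^{\infty}(A)\leq -\infty$, contradicting item (4)), and Li's original formulation — which is also the one actually needed later — is $i^{\infty}(A\cup B)\leq i^{\infty}(A)+i(B)$; your chain $i(A_j\cup B_j)-n_j\leq (i(A_j)-n_j)+i(B_j)\leq (i(A_j)-n_j)+i(B)$ proves precisely that, so treating the printed (3) as a misprint and establishing the standard form is the right call.

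The one step you have not justified is in (5): you assert that $(V_j\cap Y_0)\cap\mathrm{Fix}\,G=\{0\}$, but no fixed-point hypothesis appears in (5) — note that (4) explicitly assumes $V\cap\mathrm{Fix}\,G=\{0\}$ while (5) does not — and the $d$-dimension property of Definition \ref{difff} only applies to invariant subspaces whose intersection with $\mathrm{Fix}\,G$ is trivial, so without this assumption you cannot conclude $i(A_j)=n_j-m$. Nothing in the axioms of Definition \ref{diif} forces a set meeting $\mathrm{Fix}\,G$ to have infinite index, so the case of a nontrivial intersection is not automatically harmless. The repair is cheap and should be stated: either add the hypothesis $V\cap\mathrm{Fix}\,G=\{0\}$ (then $V_j\cap Y_0\subset V$ meets $\mathrm{Fix}\,G$ trivially, and your splitting $V_j=(V_j\cap Y_0)\oplus\tilde Y_0$ for $j\geq j_0$, giving $\dim(V_j\cap Y_0)=d(n_j-m)$, closes the argument exactly as you wrote), or note that in the only situation where this proposition is used in the present paper one has $G\cong\mathbb{Z}_2$ acting by $\tau(u,v)=(-u,-v)$, so $\mathrm{Fix}\,G=\{0\}$ and the condition is automatic. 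With that hypothesis made explicit, your proof is complete and coincides with the standard one.
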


\begin{definition}[see \cite{Willem}]	
	A functional $E\in C^{1}(Z,\mathbb{R})$ is said to satisfy 
	 condition $(PS)_c$ if any sequence $\{u_{nk}\}_{k}$, $u_{nk}\in Z_{nk}$, such that $ E_{n_k}(u_{nk}) \to c, \quad E'_{nk}(u_{nk})\to 0, \text{ as }
	n_k\to \infty,$	
	possesses a convergent subsequence, where $Z_{nk}$ is the $n_k$-dimensional subspace of $Z$ as in Definition \ref{difff} and
	$E_{n_k}=E|Z_{nk}$.	
\end{definition}
\begin{theorem}[see \cite{Li1}]\label{thhh}
	Assume the following:
	\begin{itemize}
		\item[$(\textbf{\textit{B}}_1)$]  $E\in C^1(Z,\mathbb{R})$ is $G$-invariant;		
		\item[$(\textbf{\textit{B}}_2)$] There exist G-invariant closed subspaces $U$ and $V$ such that $V$ is infinite dimensional and
		$Z=U\oplus V$;
		\item[$(\textbf{\textit{B}}_3)$] There is a sequence of $G$-invariant finite-dimensional subspaces 
		$$  V_1\subset V_2 \subset \dots \subset V_j \subset \dots , \quad \dim V_j =dn_j,$$
		such that $V=\overline{\bigcup_{j=1}^{\infty}V_j}$;
		\item[$(\textbf{\textit{B}}_4)$] There is an index theory $i$ on $Z$ satisfying the $d$-dimension property;
		\item[$(\textbf{\textit{B}}_5)$]
		There are $G$-invariant subspaces $Y_0, \tilde{Y}_0, Y_1$ of $V$ such that $V=Y_0\oplus \tilde{Y}_0$, $Y_1, \tilde{Y}_0\subset V_{j_0}$ for some $j_0$ and $\dim \tilde{Y}_0= dm <dk=\dim Y_1$;		
		\item[$(\textbf{\textit{B}}_6)$] There are $\mathfrak{M}$ and $\mathfrak{N} $, $\mathfrak{M} <\mathfrak{N}$ such that $E$ satisfies $(PS)_c$, for all $c\in \left[ \mathfrak{M},\mathfrak{N}\right] $;
		\item[$(\textbf{\textit{B}}_7)$] 		 The following holds
		\begin{eqnarray*}
			\begin{cases}
				(1) \text{ either } \text{Fix}G\subset U\oplus Y_1 \text{ or }   \text{Fix}G\cap V=\left\lbrace 0\right\rbrace ,\\
				(2) \text{ there is } \rho>0 \text{ Such that for all } u\in Y_0\cap S_{\rho}(0), \text{ we have } E(z)\geq \mathfrak{M},\\
				(3) \text{ for all } z\in U\oplus Y_1, \text{ we have } E(z)\leq \mathfrak{N}.
			\end{cases}
		\end{eqnarray*}		
		If $i^{\infty}$  is the limit index corresponding to $i$, then the numbers
		$ c_j :=\inf_{i^{\infty}(A)\geq j}\sup_{z\in A}E(z), \ -k+1\leq j\leq -m,$
	\end{itemize}		
	are critical values of $E$ , and $\mathfrak{M} \leq c_{-k+1}\leq \dots \leq c_{-m} \leq \mathfrak{N}$.
	Moreover, if $c= c_l= \dots=c_{l+r}$, $r\geq 0$, then $i(\mathbb{K}_c)\geq r+1$, where $\mathbb{K}_c=\left\lbrace z\in Z: E'(z)=0, E(z)=c\right\rbrace .$
\end{theorem}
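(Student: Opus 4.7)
The plan is to apply the abstract Limit Index Theorem (Theorem \ref{thhh}) to the energy functional $\tilde{E}_{\uplambda}$, combined with the Krawcewicz-Marzantowicz principle of symmetric criticality. Two group actions enter: $G_1:=O(N)$ acting by $(g\cdot(u,v))(x)=(u(g^{-1}x),v(g^{-1}x))$, under which $\tilde{E}_{\uplambda}$ is invariant thanks to $(\textbf{\textit{F}}_5)$ and the radial symmetry of $\uplambda,p,q,h,r$; and $G_2:=\mathbb{Z}_2=\{\pm 1\}$ acting antipodally by $(u,v)\mapsto(-u,-v)$, which yields invariance by $(\textbf{\textit{F}}_4)$. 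Passing to the radial subspace $Y_{\mathrm{rad}}:=Y^{G_1}=X_{\mathrm{rad}}\times X_{\mathrm{rad}}$ recovers the compactness of $X_{\mathrm{rad}}\hookrightarrow L^{\ell(x)}(\mathbb{R}^N)$ for $q^+<\ell(x)<h^*(x)$, and the Krawcewicz-Marzantowicz principle ensures that critical points of $\tilde{E}_{\uplambda}|_{Y_{\mathrm{rad}}}$ lift to critical points on the full $Y$, hence to weak solutions of \eqref{s1.1}. I then decompose $Y_{\mathrm{rad}}=U\oplus V$ with $U:=X_{\mathrm{rad}}\times\{0\}$ and $V:=\{0\}\times X_{\mathrm{rad}}$, pick an increasing sequence $V_1\subset V_2\subset\cdots$ of $G_2$-invariant finite-dimensional subspaces with $V=\overline{\bigcup_j V_j}$, and use the $\mathbb{Z}_2$-Krasnoselskii genus as the index $i$ satisfying the $1$-dimension property.

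The noncooperative sign structure makes the geometric conditions $(\textbf{\textit{B}}_7)$ transparent: $\tilde{E}_{\uplambda}\leq 0$ on $U$, while on a small sphere $\{0\}\times S_\rho(0)\subset V$, combining $(\textbf{\textit{K}}_1)$, $(\textbf{\textit{H}}_{a_2})$, the growth estimate $(\textbf{\textit{F}}_1)$, and Propositions \ref{prop5}-\ref{prop4} gives
\[
\tilde{E}_{\uplambda}(0,v)\geq\tfrac{\mathfrak{K}_0}{h^+}\rho^{h^+}-C_1\rho^{r^-}-\uplambda^{+}C_2\rho^{\ell^-},
\]
which for $\rho$ small and $\uplambda^+\leq\uplambda_\star$ small enough furnishes a positive lower bound $\mathfrak{M}$. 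On the finite-dimensional $U\oplus V_k$, norm equivalence and the dominance of the critical term $-\int\frac{1}{r(x)}|v|^{r(x)}dx$ (with $r^->q^+/\gamma$ by $(\textbf{\textit{K}}_2)$) force $\tilde{E}_{\uplambda}\to-\infty$ at infinity, producing an upper bound $\mathfrak{N}$. Choosing $Y_0=V$, $\tilde{Y}_0=\{0\}$ (so $m=0$), and $Y_1=V_k$ in Theorem \ref{thhh} yields $k$ critical values $\mathfrak{M}\leq c_{-k+1}\leq\cdots\leq c_0\leq\mathfrak{N}$ for each $k$; as $k\to\infty$, the resulting infinite sequence of $c_j$'s either contains infinitely many distinct values or accumulates at a value $c$ where the multiplicity bound $i(\mathbb{K}_c)\geq r+1$ forces infinitely many critical points.

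The main obstacle is verifying the Palais-Smale condition $(\textbf{\textit{B}}_6)$ at every level $c\in[\mathfrak{M},\mathfrak{N}]$, and here the smallness of $\uplambda_\star$ is decisive. Boundedness of a $(PS)_c$ sequence $\{(u_n,v_n)\}$ follows from the combination $\tilde{E}_{\uplambda}(u_n,v_n)-\tfrac{1}{\vartheta^-}\langle\tilde{E}'_{\uplambda}(u_n,v_n),(u_n,v_n)\rangle$, in which $(\textbf{\textit{K}}_2)$, $(\textbf{\textit{H}}_{a_4})$, and the Ambrosetti-Rabinowitz-type condition $(\textbf{\textit{F}}_2)$ conspire to control $\|(u_n,v_n)\|_Y$. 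Passing to a weak limit $(u,v)$ and applying Theorem \ref{ccpp} to $|u_n|^{r(x)}$ and $|v_n|^{r(x)}$ on $\mathcal{C}_h$, along with Theorem \ref{ccppp} for the behavior at infinity, produces atoms $\{\upmu_i,\upnu_i\}_{i\in I}$ together with masses $\upmu_\infty,\upnu_\infty$. Testing $\tilde{E}'_{\uplambda}(u_n,v_n)$ against cut-off multiples $\psi_{\varepsilon,x_i}v_n$ near each atom (and analogous cut-offs supported in $\{|x|>R\}$ for the behavior at infinity), passing $n\to\infty$ and then $\varepsilon\to 0$ (respectively $R\to\infty$), and using $(\textbf{\textit{H}}_{a_3})$ to control the elliptic part, matches $\upmu_i=\upnu_i$ and $\upmu_\infty=\upnu_\infty$. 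Combined with the reverse Sobolev inequality $\upnu_i\leq C^\ast\max\{\upmu_i^{r^+/h^-},\upmu_i^{r^-/h^+}\}$, this forces either $\upnu_i=0$ or $\upnu_i\geq\upnu^\ast$ for a universal constant, and a corresponding energy lower bound $c\geq c^\ast$. Choosing $\uplambda_\star$ so small that $\mathfrak{N}<c^\ast$ rules out concentration and yields strong convergence in $Y_{\mathrm{rad}}$, which closes the Palais-Smale argument and, via Theorem \ref{thhh} and the Krawcewicz-Marzantowicz principle, delivers the infinitely many weak solutions of \eqref{s1.1} in $X\times X$.
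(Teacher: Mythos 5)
Your proposal does not prove the statement in question. Theorem \ref{thhh} is the abstract limit index theorem of Li: under the structural hypotheses $(\textbf{\textit{B}}_1)$--$(\textbf{\textit{B}}_7)$ on an arbitrary $G$-invariant functional $E$ on a $G$-Banach space $Z$, the minimax values $c_j=\inf_{i^{\infty}(A)\geq j}\sup_{z\in A}E(z)$ are critical values lying in $[\mathfrak{M},\mathfrak{N}]$, with the multiplicity bound $i(\mathbb{K}_c)\geq r+1$ in case of coincidence. What you have written is instead an outline of the proof of the paper's main result, Theorem \ref{theooo}: you take Theorem \ref{thhh} as given (``the plan is to apply the abstract Limit Index Theorem\dots'') and devote the whole argument to verifying its hypotheses for the specific functional $\tilde{E}_{\uplambda}$ --- symmetric criticality, the geometry in $(\textbf{\textit{B}}_7)$, and the local Palais--Smale condition via concentration compactness. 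That is essentially the content of Section \ref{sect3} of the paper, but with respect to the requested statement it is circular: the theorem you were asked to prove is assumed, not established.

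A genuine proof of Theorem \ref{thhh} must proceed from the axioms of the index and of the limit index alone: (i) an intersection lemma showing that any $A\in\Sigma$ with $i^{\infty}(A)\geq j\geq -k+1$ necessarily meets $Y_0\cap S_{\rho}(0)$ (using monotonicity, subadditivity, and the property $i^{\infty}(S_{\rho}(0)\cap Y_0)\geq -m$ together with $(\textbf{\textit{B}}_7)(1)$), which combined with $(\textbf{\textit{B}}_7)(2)$ yields $c_j\geq\mathfrak{M}$; (ii) the exhibition of an admissible set built from $U\oplus Y_1$ with limit index at least $-m$, on which $E\leq\mathfrak{N}$ by $(\textbf{\textit{B}}_7)(3)$, giving $c_j\leq\mathfrak{N}$ and in particular finiteness of each $c_j$; and (iii) an equivariant deformation lemma along the Galerkin filtration $Z_j=U\oplus V_j$, legitimate because of the filtration-wise Palais--Smale condition in $(\textbf{\textit{B}}_6)$, showing that if $\mathbb{K}_c=\emptyset$ (respectively $i(\mathbb{K}_c)\leq r$) one could deform sublevel sets below $c$ and, via the continuity, supervariance, and subadditivity properties of $i$ from Definition \ref{diif}, contradict the definition of $c_j$. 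None of these steps appears in your proposal. Note finally that the paper itself does not reprove this theorem either --- it is quoted verbatim from Li \cite{Li1} --- so if the task is to supply its proof, the argument must be reconstructed from that reference rather than from the application carried out in Section \ref{sect3}.
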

\textbf{\textbf{Notations.}}  $X=W^{1,p(x)}_{b}(\mathbb{R}^N)\cap W^{1,h(x)}_{b}(\mathbb{R}^N),$
 $Y=X\times X,$
 $G_1= O(\mathbb{N})$ is the group of orthogonal linear transformations in $\mathbb{R}^N,$
$$X_{G_1}:=W^{1,p(x)}_{b,G_1}(\mathbb{R}^N)\cap W^{1,h(x)}_{b,G_1}(\mathbb{R}^N)=\{u\in W^{1,p(x)}_{b}(\mathbb{R}^N)\cap W^{1,h(x)}_{b}(\mathbb{R}^N): gu(x)=u(g^{-1}x)=u(x), g\in G_1\}
$$
$$
= \{u\in W^{1,p(x)}_{b}(\mathbb{R}^N)\cap W^{1,h(x)}_{b}(\mathbb{R}^N): u \text{ and } v \text{ are radially symmetric}\} ,$$ and $Z=Y_{G_1}= X_{G_1}\times X_{G_1}$.

\section{Proof of the main result}\label{sect3}

To prove the main result of this paper,  Theorem \ref{theooo}, we 
shall perform
 a careful analysis of the behavior of minimizing sequences in Lemma \ref{lemma2}, by using  the concentration-compactness principle for the weighted-variable exponent Sobolev space stated above, which will
 allow us to recover compactness below some critical threshold.

Since $\tilde{E}_{\uplambda}\in C^1(Y,\mathbb{R})$,  the weak solutions for problem (\ref{s1.1}) coincide with the critical
points of $\tilde{E}_{\uplambda}$. On the other hand, by condition$(\textbf{\textit{F}}_5)$, it is immediate that $\tilde{E}_{\uplambda}$ is $G_1$-invariant. Therefore by the
principle of symmetric criticality of Krawcewicz and Marzantowicz \cite{Krawcewicz}, we know that
$(u,v)$ is a critical point of  $\tilde{E}_{\uplambda}$
if and only if $(u,v)$ is a critical point of
$E_{\uplambda}=\tilde{E}_{\uplambda}|_{Z=X_{G_1}\times X_{G_1}}$. 
So it suffices to prove the
existence of a sequence of critical points of $E_{\uplambda}$ on $Z$.\par 

Let $X$ be a Banach space and a functional $f\in C^1(X,\mathbb{R})$. Given sequence $\left\lbrace u_n\right\rbrace _n$ in $X$, if there exist $c\in \mathbb{R}$ such that 
\begin{equation*}
	f(u_n) \to c   \text{ and } f'(u_n)\to 0 \text{ in } X',
\end{equation*}
we say that $\left\lbrace u_n\right\rbrace _n$ is a Palais-Smale sequence with energy level $c$ ( or $\left\lbrace u_n\right\rbrace$ is $(PS)_c$ for short).
When any $(PS)_c$ sequence for $f$ possesses some strongly convergent subsequence in $X$, we say that $f$ satisfies the Palais-Smale condition at level $c$ (or $f$ is $(PS)_c$ short).\par 

In order to prove that $E_{\uplambda}$ satisfies  $(PS)_c$ we recall some properties of the Banach space $X$. According to Triebel \cite[Sect. 4.9.4]{TRIEBEL}, there exists a Schauder basis $\left\lbrace e_n'\right\rbrace_{n=1}^{\infty}$ for $X$.
Let $e_n=\int_{G_1}e_n'(g(x))d_{\upmu_g}.$
We are going to, 
 if necessary, select one in identical elements. 
 We know that $\left\lbrace e_n\right\rbrace_{n=1}^{\infty}$
is a Schauder basis for $X_{G_1}$. Furthermore, since $X_{G_1}$ is reflexive, 
$\left\lbrace e_n^*\right\rbrace_{n=1}^{\infty}$ the biorthogonal functionals associated to the basis $\left\lbrace e_n\right\rbrace_{n=1}^{\infty}$
(which is characterized by relations 
$$ \left\langle e_m^*,e_n\right\rangle =\delta_{m,n}=\begin{cases}
	1  \text{ if } n=m ,\\
	~0 \text{ if } n\neq m.
\end{cases}),$$
 form a basis for $X_{G_1}^\star$ with the following properties - see Lindenstrauss and Tzafriri \cite[ cf. Proposition l.b.1 and Theorem l.b.51]{LINDENSTRAUSS}. Denote
$$X_{G_1}^{(n)}= \text{span}\left\lbrace e_1,...,e_n\right\rbrace ,\
X_{G_1}^{(n)^\perp}= \overline{\text{span}\left\lbrace e_{n+1},...\right\rbrace}, 
\
X_{G_1}^{*(n)}= \text{span}\left\lbrace e_1^\star,...,e_n^\star\right\rbrace.$$

Let $P_n: X_{G_1}\to X_{G_1}^{(n)}$  be the projector corresponding to decomposition $X_{G_1}= X_{G_1}^{(n)}\oplus X_{G_1}^{(n)^\perp}$ and $P_n^{*}: X_{G_1}^*\to X_{G_1}^{*(n)}$
the projector corresponding to the decomposition $X_{G_1}^*= X_{G_1}^{*(n)}\oplus X_{G_1}^{*(n)^\perp}$. Then $P_nu\to u$, $P_n^*v^*\to v^*$ for any $u\in X_{G_1}$, $v^*\in X_{G_1}^*$ as $n\to \infty$ and 
$\left\langle P_n^*v^*,u\right\rangle =\left\langle v^*,P_nu\right\rangle$.
Set 
$ Z=Y_{G_1}=X_{G_1}\times X_{G_1}, \quad Z_{n}=X_{G_1}\times X_{G_1}^{(n)}.$
We shall prove the following local Palais-Smale condition.
\begin{lemma} \label{lemma2}
	Assume that the conditions
	 $(\textbf{\textit{H}}_{a_1})-(\textbf{\textit{H}}_{a_4})$, $(\textbf{\textit{K}}_1)-(\textbf{\textit{K}}_2)$  and  $(\textbf{\textit{F}}_1)-(\textbf{\textit{F}}_5)$ 
	 are satisfied. Then the functional $E_{\uplambda}$ satisfies the local $(PS)_c$ with 
	$$c \in \Biggl(-\infty, \Big( \dfrac{1}{\vartheta^-}-\frac{1}{r^-}\Big)\max\Big\{\Big(\frac{\mathfrak{K}_0  D}{ S^{\frac{h^-}{r^+}}}\Big)^{\frac{r^+}{r^+-h^-}},\Big(\frac{\mathfrak{K}_0D}{ S^{\frac{h^-}{r^-}}}\Big)^{\frac{r^-}{r^--h^-}}\Big\}\Biggr),$$
	where $D=(1-\mathcal{H}(\kappa_\star^3)) \min \left\{\kappa_1^0,\kappa_2^0\right\}+ \mathcal{H}(\kappa_\star^3)\min \left\{\kappa_1^2,\kappa_2^2\right\}$, in the following sense: if $\left\lbrace y_{n_k}\right\rbrace \subset Y$ is a sequence such that $y_{n_k}=(u_{n_k},v_{n_k})$ and  
	$$E_{\uplambda_{ n_k}}(u_{n_k},v_{n_k})\to c 
	\text{ and }  E'_{\uplambda_{n_k}}(u_{n_k},v_{n_k}) \to 0, \text{ as }  k\to \infty,$$ 
	where $	E_{\uplambda_{n_k}}=E_{\uplambda}|_{Z_{n_k}}$ with $Z_{n_k}= X_{G_1}\times X_{n_k},$
	then $\left\lbrace (u_{n_k},v_{n_k})\right\rbrace_k $  possesses a subsequence converging strongly  in $Z$ to a critical point of the functional $E_{\uplambda}$.
	
\end{lemma}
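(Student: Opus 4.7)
The plan is to implement the classical four-step scheme for Palais-Smale analysis below a critical threshold: establish boundedness of the sequence, extract a weak limit, invoke the concentration-compactness principles of Theorems~\ref{ccpp} and~\ref{ccppp} together with the hypothesized upper bound on $c$ to rule out concentration at points and at infinity, and finally upgrade the resulting weak convergence to strong convergence via the $(S_+)$-type monotonicity supplied by $(\textbf{\textit{H}}_{a_3})$.

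For boundedness I exploit that $(u_{n_k},0)$ and $(0,v_{n_k})$ both lie in $Z_{n_k}$, so the $(PS)_c$ assumption furnishes two separate relations. Testing $E'_{\uplambda}(y_{n_k})$ against $(u_{n_k},0)$ produces
\[
K(\mathcal{B}(u_{n_k}))\!\int\!\bigl[a_1(|\nabla u_{n_k}|^{p(x)})|\nabla u_{n_k}|^{p(x)}+ b a_2(|u_{n_k}|^{p(x)})|u_{n_k}|^{p(x)}\bigr]dx+\int|u_{n_k}|^{r(x)}dx+\int\uplambda u_{n_k}\tfrac{\partial\mathcal{F}}{\partial u}dx\leq \varepsilon_{n_k}\|u_{n_k}\|_{b,h}.
\]
All three summands are non-negative by $(\textbf{\textit{K}}_1)$, $(\textbf{\textit{H}}_{a_2})$ and $(\textbf{\textit{F}}_3)$; combining the pointwise coercivity $a_i(s)\,s\geq D_i\,s^{h(x)/p(x)}$ extracted from $(\textbf{\textit{H}}_{a_2})$ with the modular-norm estimate~\eqref{inq21} yields $\mathfrak{K}_0 D\,\|u_{n_k}\|_{b,h}^{h^-}\leq \varepsilon_{n_k}\|u_{n_k}\|_{b,h}$, hence $\{\|u_{n_k}\|_{b,h}\}$ is bounded. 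For $v_{n_k}$ the signs are mixed, so I consider $\vartheta^- E_{\uplambda}(y_{n_k})-\langle E'_{\uplambda}(y_{n_k}),(0,v_{n_k})\rangle$ and simultaneously invoke $(\textbf{\textit{K}}_2)$ (to bound $\vartheta^-\widehat{K}(\mathcal{B}(v))$ from below by $\vartheta^-\gamma K(\mathcal{B}(v))\mathcal{B}(v)$), $(\textbf{\textit{H}}_{a_4})$ (to bound $K(\mathcal{B}(v))\!\int[\cdots]dx$ from above by $(\alpha p^+/\gamma)\widehat{K}(\mathcal{B}(v))$, with $\alpha=\max\{\alpha_1,\alpha_2\}$), and $(\textbf{\textit{F}}_2)$-$(\textbf{\textit{F}}_3)$ together with the bound already obtained on $\int\uplambda u_{n_k}F_u$ to dispose of the $\mathcal{F}$-contribution. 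The strict inequality $\alpha/\gamma<\vartheta^-/p^+$ of $(\textbf{\textit{H}}_{a_4})$ guarantees a strictly positive coercive coefficient, so $\{\|v_{n_k}\|_{b,h}\}$ is bounded as well. Passing to a subsequence, $(u_{n_k},v_{n_k})\rightharpoonup(u,v)$ in $Z$, with a.e. convergence and strong convergence in $L^{s(x)}_{\mathrm{loc}}(\mathbb{R}^N)$ for every continuous $s$ satisfying $s(x)<h^*(x)$.

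The main obstacle is the concentration analysis. Theorem~\ref{ccpp} applied to each component supplies Borel measures $\upmu^{u,v},\upnu^{u,v}$ with atoms $(\upmu^{u,v}_i,\upnu^{u,v}_i)$ at points $x_i\in\mathcal{C}_h$ subject to $\upnu^{u,v}_i\leq C^{\ast}\max\{(\upmu^{u,v}_i)^{r^+/h^-},(\upmu^{u,v}_i)^{r^-/h^+}\}$, and Theorem~\ref{ccppp} provides the analogous $\upmu^{u,v}_\infty,\upnu^{u,v}_\infty$ at infinity. Testing $E'_{\uplambda}(y_{n_k})$ against $(\psi_{\varepsilon,i}u_{n_k},0)$, where $\psi_{\varepsilon,i}\in C_c^\infty(\mathbb{R}^N)$ is a standard cut-off supported in $B_{2\varepsilon}(x_i)$ and equal to $1$ on $B_\varepsilon(x_i)$, and passing to the limit first in $n_k\to\infty$ (the cross-term $K\!\int a_1 u_{n_k}|\nabla u_{n_k}|^{p-2}\nabla u_{n_k}\cdot\nabla\psi_{\varepsilon,i}\,dx$ vanishes by H\"older combined with strong $L^{h(x)}_{\mathrm{loc}}$-convergence of $u_{n_k}$, and the $\mathcal{F}$-term vanishes by the subcriticality of $\ell$ in $(\textbf{\textit{F}}_1)$) and then in $\varepsilon\to 0$, yields a sum of non-negative quantities converging to zero and therefore forces $\mathfrak{K}_0 D\,\upmu^u_i+\upnu^u_i=0$; the same cut-off argument with a bump supported on $\{|x|>R\}$, $R\to\infty$, also annihilates $\upmu^u_\infty$ and $\upnu^u_\infty$. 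For the $v$-atoms the Kirchhoff contribution has opposite sign, leaving only the one-sided estimate $\mathfrak{K}_0 D\,\upmu^v_j\leq \upnu^v_j$; combined with the CCP-Sobolev inequality this dichotomy forces either $\upnu^v_j=0$ or
\[
\upnu^v_j\geq \max\!\Bigl\{\bigl(\mathfrak{K}_0 D/S^{h^-/r^+}\bigr)^{r^+/(r^+-h^-)},\,\bigl(\mathfrak{K}_0 D/S^{h^-/r^-}\bigr)^{r^-/(r^--h^-)}\Bigr\}.
\]
To rule out the lower alternative, I compute $c=\lim\bigl[E_{\uplambda}(y_{n_k})-\tfrac{1}{\vartheta^-}\langle E'_{\uplambda}(y_{n_k}),(0,v_{n_k})\rangle\bigr]$ and, using $(\textbf{\textit{K}}_2)$, $(\textbf{\textit{H}}_{a_4})$, the elementary inequality $\tfrac{1}{\vartheta^-}-\tfrac{1}{r(x)}\geq\tfrac{1}{\vartheta^-}-\tfrac{1}{r^-}$, and $(\textbf{\textit{F}}_2)$-$(\textbf{\textit{F}}_3)$, isolate a lower bound of the shape $c\geq (\tfrac{1}{\vartheta^-}-\tfrac{1}{r^-})\bigl(\upnu^v_j+\upnu^v_\infty\bigr)$ modulo quantities already controlled by the step-one bounds on the $u$-block. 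The hypothesized strict upper bound on $c$ then contradicts the threshold just displayed, forcing every $\upnu^v_j$ and $\upnu^v_\infty$ to vanish.

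Once all atoms have been eliminated, the Brezis-Lieb lemma in variable exponents yields $|u_{n_k}|^{r(x)}\to|u|^{r(x)}$ and $|v_{n_k}|^{r(x)}\to|v|^{r(x)}$ in $L^1(\mathbb{R}^N)$; feeding this back into the relation $\langle E'_{\uplambda}(y_{n_k})-E'_{\uplambda}(y),y_{n_k}-y\rangle\to 0$, the strict monotonicity inherent in $(\textbf{\textit{H}}_{a_3})$ upgrades weak to strong convergence $y_{n_k}\to y$ in $Z$. Density of $\bigcup_n Z_n$ in $Z$ together with continuity of $E'_{\uplambda}$ on bounded sets confirms that $y$ is a critical point of $E_{\uplambda}$.
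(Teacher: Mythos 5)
Your proposal follows the same architecture as the paper's proof: boundedness from testing against $(u_{n_k},0)$ and a $\vartheta$-weighted combination for the $v$-block via $(\textbf{\textit{K}}_2)$, $(\textbf{\textit{H}}_{a_4})$, $(\textbf{\textit{F}}_2)$; the concentration-compactness dichotomy (Theorems \ref{ccpp}--\ref{ccppp}) with cut-offs at the atoms and at infinity; an energy identity contradicting the threshold; and the monotonicity inequalities tied to $(\textbf{\textit{H}}_{a_3})$ to upgrade weak to strong convergence. There are two local variations worth noting. First, you dispose of the $u$-component by running the concentration-compactness machinery on $u_{n_k}$ too and using the favourable sign of $\langle E'_{\uplambda},(\psi u_{n_k},0)\rangle$ to annihilate all of its atoms, whereas the paper claims strong convergence of $u_{n_k}$ directly by evaluating $E'_{\uplambda}$ at the shifted point $(u_{n_k}-u,v_{n_k})$; your route is the more standard one. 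Second, you use the constant multiplier $1/\vartheta^-$ (test function $(0,v_{n_k})$) in place of the paper's variable test function $(0,v_{n_k}/\vartheta(x))$, which spares you the $\nabla\vartheta$ cross-term that the paper must absorb via Young's inequality. The one loose point is the contradiction step: in your identity $c=\lim\bigl[E_{\uplambda}(y_{n_k})-\tfrac{1}{\vartheta^-}\langle E'_{\uplambda}(y_{n_k}),(0,v_{n_k})\rangle\bigr]$ the $u$-block contributes the nonpositive terms $-\widehat{K}(\mathcal{B}(u_{n_k}))-\int r(x)^{-1}|u_{n_k}|^{r(x)}dx-\tfrac{1}{\vartheta^-}\int\uplambda\,u_{n_k}\tfrac{\partial\mathcal{F}}{\partial u}\,dx$ to the right-hand side, and merely knowing these are \emph{bounded} does not yield $c\geq(\tfrac{1}{\vartheta^-}-\tfrac{1}{r^-})\upnu^v_j$; they would have to be shown nonnegative or negligible. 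The paper's own proof carries exactly the same imprecision (it silently replaces $E_{\uplambda}(u_{n_k},v_{n_k})$ by $E_{\uplambda}(0,v_{n_k})$), so this is not a gap relative to the reference argument, but it is the one spot where ``modulo quantities already controlled'' should be made precise.
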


\begin{proof}
	
	First, we show that  $\left\lbrace (u_{n_k},v_{n_k})\right\rbrace $ is bounded in $Z$.
	If not, we may assume that $ \| u_{n_k}\|_{b,h}>1$ and $ \| v_{n_k}\|_{b,h }>1$ for any
	integer $n$. We have by condition $(\textbf{\textit{F}}_3)$,
	
	\begin{align*}
		o(1)\| u_{n_k}\|_{b,h}&\geq \langle -E'_{\uplambda_{n_k}}(u_{n_k},v_{n_k}),(u_{n_k},0) \rangle	\\
		&=  K\left(
		\mathcal{B}(u_{n_k})\right)\int_{\mathbb{R}^N }  \Big(\mathcal{A}_1(\nabla u_{n_k}) .\nabla u_{n_k} 
		+  b(x)\mathcal{A}_2(u)u \Big)dx\\
		&\quad+  \int_{\mathbb{R}^N} |u_{n_k}|^{r(x)}dx+\int_{\mathbb{R}^N}\uplambda(x)\frac{ \partial \mathcal{F}}{\partial u}(x,u_{n_k},v_{n_k})u_{n_k}dx,
		\\
		&\geq         K\left(
		\mathcal{B}(u_{n_k})\right)\int_{\mathbb{R}^N}\Big(a_1(|\nabla u_{n_k}|^{p(x)})|\nabla u_{n_k}|^{p(x)}+ b(x)a_2(|u_{n_k}|^{p(x)})| u_{n_k}|^{p(x)}\Big)dx.    
	\end{align*}	
	Therefore, by using $(\textbf{\textit{K}}_1)$ and $(\textbf{\textit{H}}_{a_2})$, we have 
	\begin{multline}\label{key}
		\langle -E'_{\uplambda_{n_k}}(u_{n_k},v_{n_k}),(u_{n_k},0) \rangle	\geq  \mathfrak{K}_0 \Big[ \min \left\{\kappa_1^0,\kappa_2^0\right\}\int_{\mathbb{R}^N} \Big(|\nabla u_{n_k}|^{p(x)}+ b(x)| u_{n_k}|^{p(x)}\Big)dx\\
		\quad +\min \left\{\kappa_1^2,\kappa_2^2\right\}\mathcal{H}(\kappa_\star^3)\int_{\mathbb{R}^N} \Big(|\nabla u_{n_k}|^{q(x)}+ b(x)| u_{n_k}|^{q(x)}\Big)dx\Big].
	\end{multline}	
	Let us assume, for the sake of contradiction, that there exists a subsequence, still denoted by $\left\lbrace u_{n_k}\right\rbrace $, such that 
	$\left\|  u_{n_k}\right\|_{b,h}
	\to + \infty $.	
	If $\kappa_\star^3=0$, from Proposition \ref{prop2}, we have 
	\begin{align*}
		\langle -E'_{\uplambda_{n_k}}(u_{n_k},v_{n_k}),(u_{n_k},0) \rangle&\geq  C_{1}  \| u_{n_k}\|_{b,p}^{p^{-}},
	\end{align*}	
	thus 
	\begin{align*}
		o(1)\| u_{n_k}\|_{b,h}&\geq C_{1}  \| u_{n_k}\|_{b,p}^{p^{-}}.
	\end{align*}	
   However, this is a contradiction since $p^->1$. Therefore we can
	conclude that $\left\lbrace u_{n_k}\right\rbrace $ is bounded in $X_{G_1}$.	
	
	Next, if $\kappa_\star^3>0$, we need to analyze the following cases :\\	
	
	$(1)  \left\|  u_{n_k} \right\| _{b,p} \to +\infty$ and $\left\|  u_{n_k}\right\| _{b,q} \to +\infty$  as $k \to  +\infty$;
	
	$(2)  \left\|  u_{n_k} \right\| _{b,p}  \to +\infty$  and $\left\|  u_{n_k}\right\| _{b,q} $ is bounded;

	$(3)  \left\|  u_{n_k} \right\| _{b,p} $ is bounded and $\left\|  u_{n_k}\right\| _{b,q} \to +\infty$.
	
	\noindent
	We shall investigate each of these cases separately. In the case $(1)$, for $m$ large enough, $\| u_{n_k}\|_{b,q} ^{q^-} \geq \| u_{n_k}\|_{b,q} ^{p^-}$.   Hence, using relation \eqref{key}, we get 
	\begin{align*}
		c+ o_{n_k}(1)&\geq  C_{1} \| u_{n_k}\|_{b,p}^{p^{-}} +C_{2}\mathcal{H}(\kappa_\star^3)\| u_{n_k}\|_{b,q}^{q^{-}}\\
		&\geq  C_{1} \| u_{n_k}\|_{b,p}^{p^{-}} +C_{2}\mathcal{H}(\kappa_\star^3)\| u_{n_k}\|_{b,q}^{p^{-}},
		\geq C_{3} \| u_{nk}\|_{b,p}^{p^{-}},
	\end{align*}
	which leads to an absurd result.
	
	\noindent
	In the case (2), by relation \eqref{key}, we obtain
	\begin{align*}
		c+ o_{n_k}(1)&\geq C_{1} \| u_{n_k}\|_{b,p}^{p^{-}}.
	\end{align*}
	Taking the limit as $k\to +\infty$, we get a contradiction.
	
	\noindent
	Case $(3)$ can be handled in a manner similar to case $(2)$. 
	Hence, we conclude that $\left\lbrace u_{n_k}\right\rbrace $ is bounded in $X_{G_1}$.
	
	On the one hand, we get
	\begin{align*}
		c+o(1)\| v_{n_k}\|_{b,h}&\geq E_{\uplambda_{n_k}}(0,v_{n_k})-\langle E'_{\uplambda_{n_k}}(u_{n_k},v_{n_k}),(0,\frac{v_{n_k}}{\vartheta(x)}) \rangle	\\
		&= \widehat{K}\left(
		\mathcal{B}(v_{n_k})\right) -\int_{\mathbb{R}^N}\frac{1}{r(x)}| v_{n_k}|^{r(x)}dx-\int_{\mathbb{R}^N}\uplambda(x)\mathcal{F}(x,0,v_{n_k})dx\\
		&\quad -K\left(\mathcal{B}(v_{n_k})\right)\int_{\mathbb{R}^N }  \Big(\mathcal{A}_1(\nabla v_{n_k})\nabla(\frac{v_{n_k}}{\vartheta(x)})dx + b(x)\mathcal{A}_2(v_{n_k}) \frac{v_{n_k}}{\vartheta(x)}\Big)dx  \\
		&\quad+\int_{\mathbb{R}^N}\frac{1}{\vartheta(x)}| v_{n_k}|^{r(x)}dx +\int_{\mathbb{R}^N}\uplambda(x)\frac{ \partial \mathcal{F}}{\partial v}(x,0,v_{n_k})\frac{v_{n_k}}{\vartheta(x)}dx,
	\end{align*}
	that is, 	
	\begin{align*}
		c+o(1)\| v_{n_k}\|_{b,h}	&= \widehat{K}\left(
		\mathcal{B}(v_{n_k})\right) -K\left(\mathcal{B}(v_{n_k})\right)\int_{\mathbb{R}^N }  \Big(\mathcal{A}_1(\nabla v_{n_k})\nabla(\frac{v_{n_k}}{\vartheta(x)}) 
	 +b(x)\mathcal{A}_2(v_{n_k}) \frac{v_{n_k}}{\vartheta(x)}\Big)dx  \\	&\quad+\int_{\mathbb{R}^N}\Big( \dfrac{1}{\vartheta(x)}-\frac{1}{r(x)}\Big)| v_{n_k}|^{r(x)}dx +  \int_{\mathbb{R}^N}\uplambda(x)\Big(\frac{ \partial \mathcal{F}}{\partial v}(x,0,v_{nk})\frac{v_{n_k}}{\vartheta(x)}
		-\mathcal{F}(x,0,v_{n_k})\Big)dx.
	\end{align*}	
	
	\noindent
	Next, by using  $(\textbf{\textit{H}}_{a_4}), (\textbf{\textit{K}}_1)-(\textbf{\textit{K}}_2)$ and  $(\textbf{\textit{F}}_2)$, we obtain	
	\begin{align*}
		c+o(1)\| v_{n_k}\|_{b,h}	&\geq\mathfrak{K}_0\int_{\mathbb{R}^N}\Bigg(\Big( \dfrac{\gamma }{\alpha_1p(x)}-\frac{1}{\vartheta(x)}\Big)
		a_1(|\nabla v_{n_k}|^{p(x)})|\nabla v_{n_k}|^{p(x)}\\ &\quad+\Big( \dfrac{\gamma }{\alpha_2p(x)}-\frac{1}{\vartheta(x)}\Big)b(x)a_2(| v_{n_k}|^{p(x)})| v_{n_k}|^{p(x)}\Bigg)dx\\
		&\quad +\mathfrak{K}_0\int_{\mathbb{R}^N}\dfrac{v_{n_k}}{\vartheta(x)^2}a_1(|\nabla v_{n_k}|^{p(x)})|\nabla v_{n_k}|^{p(x)-2}\nabla v_{n_k}\nabla\vartheta dx +\int_{\mathbb{R}^N}\Big( \dfrac{1}{\vartheta(x)}-\frac{1}{r(x)}\Big)| v_{n_k}|^{r(x)}dx,\\
		&\geq \mathfrak{K}_0 \int_{\mathbb{R}^N}\Big(\frac{\gamma}{\max\{\alpha_1,\alpha_2\}p(x)}-\frac{1}{\vartheta(x)}\Big)\Big[a_1(|\nabla v_{n_k}|^{p(x)})|\nabla v_{n_k}|^{p(x)}+b(x)a_2(| v_{n_k}|^{p(x)})| v_{n_k}|^{p(x)}\Big]dx\\
		&\quad +\mathfrak{K}_0\int_{\mathbb{R}^N}\dfrac{v_{n_k}}{\vartheta(x)^2}a_1(|\nabla v_{n_k}|^{p(x)})|\nabla v_{n_k}|^{p(x)-2}\nabla v_{n_k}\nabla\vartheta dx +\int_{\mathbb{R}^N}\Big( \dfrac{1}{\vartheta(x)}-\frac{1}{r(x)}\Big)| v_{n_k}|^{r(x)}dx.\\
	\end{align*}
	
	\noindent
	Denote
	\begin{equation*}
		\eth_1:=\inf_{x\in \mathbb{R}^N}\Big( \dfrac{\gamma}{\max\{\alpha_1,\alpha_2\}p(x)}-\frac{1}{\vartheta(x)}\Big)>0, \text{ and }
		\eth_2:=\inf_{x\in \mathbb{R}^N}\Big( \dfrac{1}{\vartheta(x)}-\frac{1}{r(x)}\Big)>0.
	\end{equation*}
	Then, using $(\textbf{\textit{F}}_2)$, we get
	\begin{align*}
		E_{\uplambda_{n_k}}(0,v_{k_n})-\langle E'_{\uplambda_{n_k}}(u_{n_k},v_{n_k}),(0,\frac{v_{n_k}}{\vartheta(x)}) \rangle &\geq   \mathfrak{K}_0\Bigg( \eth_1\min \left\{\kappa_1^0,\kappa_2^0\right\}\int_{\mathbb{R}^N} \Big(|\nabla v_{n_k}|^{p(x)}+ b(x)| v_{n_k}|^{p(x)}\Big)dx\\
		&\quad +\eth_1\min \left\{\kappa_1^2,\kappa_2^2\right\}\mathcal{H}(\kappa_\star^3)\int_{\mathbb{R}^N} \Big(|\nabla v_{n_k}|^{q(x)}+ b(x)| v_{n_k}|^{q(x)}\Big)dx\Bigg)	\\
		& +\mathfrak{K}_0\int_{\mathbb{R}^N}\dfrac{v_{n_k}}{\vartheta(x)^2}a_1(|\nabla v_{n_k}|^{p(x)})|\nabla v_{n_k}|^{p(x)-2}\nabla v_{n_k}\nabla\vartheta dx
		+\int_{\mathbb{R}^N}\eth_2| v_{n_k}|^{r(x)}dx.
	\end{align*}
	On the other hand, we obtain
	\begin{align*}
		\Bigg|\dfrac{v_{n_k}}{\vartheta(x)^2}a_1(|\nabla v_{n_k}|^{p(x)})|\nabla v_{n_k}|^{p(x)-2}\nabla v_{n_k}\nabla\vartheta \Bigg|&\leq \Bigg|\kappa_1^1\dfrac{v_{n_k}}{\vartheta(x)^2}|\nabla v_{n_k}|^{p(x)-2}\nabla v_{n_k}\nabla\vartheta \Bigg|+\Bigg|\kappa_\star^3\dfrac{v_{n_k}}{\vartheta(x)^2}|\nabla v_{n_k}|^{q(x)-2}\nabla v_{n_k}\nabla\vartheta \Bigg|.
	\end{align*}
	
	\noindent
	By use the Young inequality, for any $\varepsilon\in (0, 1)$, there exists $c_1(\varepsilon)$ and $c_2(\varepsilon) > 0$ such that
	\begin{equation}\label{yoni1}
		\Bigg|\dfrac{v_{n_k}}{\vartheta(x)^2}|\nabla v_{n_k}|^{p(x)-2}\nabla v_{n_k}\nabla\vartheta \Bigg| \leq \varepsilon|\nabla v_{n_k}|^{p(x)}+ c_1(\varepsilon)|v_{n_k}|^{p(x)},
	\end{equation}	
	
	\begin{equation}\label{yoni2}
		\Bigg|\dfrac{v_{n_k}}{\vartheta(x)^2}|\nabla v_{n_k}|^{q(x)-2}\nabla v_{n_k}\nabla\vartheta \Bigg| \leq \epsilon|\nabla v_{n_k}|^{q(x)}+ c_2(\epsilon)| v_{n_k}|^{q(x)}.
	\end{equation}
	Hence, by relations (\ref{yoni1}) and  (\ref{yoni2}), we get
	\begin{align*}
		c+o(1)\| v_{n_k}\|_{b,h}&\geq  \mathfrak{K}_0 \Bigg(\int_{\mathbb{R}^N} \Big((\eth_{\star}-\epsilon)|\nabla v_{n_k}|^{p(x)}+ (\eth_{\star}b(x)-c_1(\epsilon))| v_{n_k}|^{p(x)}\Big)dx\\
		&\quad \mathcal{H}(\kappa_\star^3)\int_{\mathbb{R}^N} \Big((\eth_{\star}-\epsilon)|\nabla v_{n_k}|^{q(x)}+ (\eth_{\star}b(x)-c_2(\epsilon))| v_{n_k}|^{q(x)}\Big)dx\Bigg),
	\end{align*}
	where $ \eth_{\star}= \min \left\lbrace  \eth_1\min \left\{\kappa_1^0,\kappa_2^0\right\},  \eth_1\min \left\{\kappa_1^2,\kappa_2^2\right\}\right\rbrace $.
	
	\noindent
	Let $\epsilon< \eth_{\star}/2$ and $w_0=2\max(c_1(\epsilon),c_2(\epsilon))/ \eth_{\star}$,  we get
	\begin{align*}
		c+o(1)\| v_{n_k}\|_{b,h}&\geq  \mathfrak{K}_0 \frac{ \eth_{\star}}{2}\Bigg(\int_{\mathbb{R}^N} \Big(|\nabla v_{n_k}|^{p(x)}+b(x)| v_{n_k}|^{p(x)}\Big)dx\\
		&\quad \mathcal{H}(\kappa_\star^3)\int_{\mathbb{R}^N} \Big(|\nabla v_{n_k}|^{q(x)}+b(x) | v_{n_k}|^{q(x)}\Big)dx\Bigg)\\
		&\geq  C_{1} \| v_{n_k}\|_{b,p}^{p^{-}} +C_{2}\mathcal{H}(\kappa_\star^3)\| v_{n_k}\|_{b,q}^{q^{-}}.
	\end{align*}
	This implies that $\{v_{n_k}\}$ is bounded in $X_{G_1}$, This implies that $\| u_{nk}\|_{b,h} +\| v_{n_k}\|_{b,h}$ is bounded in $Z$.
	
In the sequel, we shall prove that $\{(u_{n_k} , v_{n_k})\}$ contains a subsequence converging strongly in $Z$.	
We note that the sequence $\{(u_{n_k}\}$  is bounded in $X_{G_1}$. Therefore, up to a subsequence, $u_{n_k} \rightharpoonup u$ in $X_{G_1}$ and $u_{n_k}\to u$ a.e. in $\mathbb{R}^N$.	
	\begin{align*}
		o(1)\| u_{n_k}-u\|_{b,h}&\geq \langle -E'_{\uplambda_{n_k}}(u_{n_k}-u,v_{n_k}),(u_{n_k}-u,0) \rangle	\\
		&=  K\left(
		\mathcal{B}(u_{n_k}-u)\right)\int_{\mathbb{R}^N } \Big( \mathcal{A}_1(\nabla (u_{n_k}-u)) .\nabla(u_{n_k}-u)  
		+  b(x)\mathcal{A}_2(u_{n_k}-u)(u_{n_k}-u)\Big)dx\\
		&\quad+  \int_{\mathbb{R}^N} |u_{n_k}-u|^{r(x)}dx+\int_{\mathbb{R}^N}\uplambda(x)\frac{ \partial \mathcal{F}}{\partial u}(x,u_{n_k}-u,v_{n_k})dx\\
		&\geq 	\mathfrak{K}_0\int_{\mathbb{R}^N}\Big(a_1(|\nabla u_{n_k}-u|^{p(x)})|\nabla u_{n_k}-u|^{p(x)} + b(x)a_2(|u_{n_k}-u|^{p(x)})| u_{n_k}-u|^{p(x)}\Big)dx \\
		&\quad+  \int_{\mathbb{R}^N} |u_{n_k}-u|^{r(x)}dx+\inf_{x\in \mathbb{R}^N}\uplambda(x)\int_{\mathbb{R}^N}\frac{ \partial \mathcal{F}}{\partial u}(x,u_{n_k}-u,v_{n_k})dx
		\\
		&\geq \mathfrak{K}_0  \int_{\mathbb{R}^N}\Big(a_1(|\nabla (u_{n_k}-u)|^{p(x)})|\nabla (u_{n_k}-u)|^{p(x)} + b(x)a_2(|u_{n_k}-u|^{p(x)})|u_{n_k}-u|^{p(x)}\Big)dx  \\
		&\geq  C_{1} \| u_{n_k}-u\|_{b,p}^{p^{-}} +C_{2}\mathcal{H}(\kappa_\star^3)\| u_{n_k}-u\|_{b,q}^{q^{-}}.    
	\end{align*}	
   This implies that $u_{n_k}$ converges strongly to $u$ in $X_{G_1}$.\par 
	
	 Next, we shall prove that there exists $v\in X_{G_1}$ such that $v_{n_k}\to v$ strongly in $X_{G_1}$.
	As $X_{G_1}$ is reflexive, passing to a subsequence, still denoted by $v_{n_k}$, we may assume that there exists $v\in X_{G_1}$ such that $v_{n_k}\rightharpoonup v$  in $X_{G_1}$ and $v_{n_k}(x)\to v(x)$ a.e. in  $\mathbb{R}^N$. we can also obtain that   $v_{n_k}\rightharpoonup v$ in $X_{G_1}$, as $k\to \infty$. 
	So there exists two positive and bounded measures $\upmu$ and  $\upnu$ on $\mathbb{R}^N$ and some at least countable family of points $(x_i)_{i\in I} \subset \mathcal{C}_{h}= \{x\in 	\mathbb{R}^N\colon r(x)=h^*(x)\}$ and of positive numbers $(\upnu_i)_{i\in I} $ and 
	$(\upmu_i)_{i\in I} $ such that 
	\begin{gather*}
		|\nabla u_{n_k}|^{h(x)}+ b(x) | u_{n_k}|^{h(x)}\overset{\ast}{\rightharpoonup}\upmu   \text{ in } \mathcal{M}_{B}(\mathbb{R}^N)
		\\
		\quad 
		| u_{n_k}|^{r(x)}\overset{\ast}{\rightharpoonup} \upnu  \text{
			in }\mathcal{M}_{B}(\mathbb{R}^N).
	\end{gather*}	
	
According to Theorem \ref{ccpp}, we have
	\begin{gather*} 
			\upmu = |\nabla u|^{h(x)} + b(x)| u|^{h(x)} + \sum_{i\in I} \upmu_i \delta_{x_i} +\tilde{\upmu}\quad \upmu(\mathcal{C}_{h})\leq1, \\
		\upnu=|u|^{r(x)} + \sum_{i\in I}\upnu_i\delta_{x_i}\quad \upnu(\mathcal{C}_{h})\leq C^{\ast}, 
	\end{gather*}
	where $\delta_{x_i}$ is the Dirac mass at  $x_i$, $I$ is countable index set  and $ \tilde{\upmu}$ is a non-atomic measure	
	\begin{gather} 
		\upnu(\mathcal{C}_{h})\leq 2^{\frac{h^+r^+}{h^-}}
		C^{\ast} \max \Big\{\upmu(\mathcal{C}_{h})^{\frac{r^+}{h^-}},\upmu(\mathcal{C}_{h})^{\frac{r^-}{h^+}}\Big\}, \label{g01}\\
		\upnu_i\leq 	C^{\ast} \max \{\upmu_i^{\frac{r^+}{h^-}},\upmu_i^{\frac{r^-}{h^+}}\},   \  \hbox{for all} \  i\in I. \label{g11}
	\end{gather}
	
	\noindent
	Concentration at infinity of the sequence $\{v_{n_k}\}$ is described by the following quantities:
	\begin{gather*} 
		\upmu_{\infty} := \lim_{R\to \infty} \limsup_{n_k\to \infty}
		\int_{\{x\in \mathbb{R}^N : |x|>R\}}( |\nabla v_{n_k}|^{h(x)}+ b(x) | v_{n_k}|^{h(x)})dx	, 		
		\\
		\upnu_{\infty} := \lim_{R\to \infty} \limsup_{n_k\to \infty}
		\int_{\{x\in \mathbb{R}^N :|x|>R\}} | v_{n_k}|^{r(x)}dx.
	\end{gather*}	
	
	We claim that $I$ is finite and for $i \in I$, either
	$v_i=0$ or \\ $\upnu_{i}\geq \max\Big\{\Big(\frac{(1-\mathcal{H}(\kappa_\star^3)) \min \left\{\kappa_1^0,\kappa_2^0\right\}+ \mathcal{H}(\kappa_\star^3)\min \left\{\kappa_1^2,\kappa_2^2\right\}}{ S^{\frac{h ^-}{r^+}}}\Big)^{\frac{r^+}{r^+-h^-}},\Big(\frac{(1-\mathcal{H}(\kappa_\star^3)) \min \left\{\kappa_1^0,\kappa_2^0\right\}+ \mathcal{H}(\kappa_\star^3)\min \left\{\kappa_1^2,\kappa_2^2\right\}}{ S^{\frac{h^-}{r^-}}}\Big)^{\frac{r^-}{r^--h^-}}\Big\}.$\\
	
	\noindent
	Let $x_i\in \mathcal{C}_{h}$ be a singular point of the measures $\upmu$ and $\upnu$. 
	We choose $\phi \in C^{\infty}_0( \mathbb{R}^N, \left[ 0,1\right]  )$ such that $\left| \nabla \phi \right|_{\infty}\leq 2$ and
	\begin{eqnarray*}
		\label{ineq1}
		\phi(x)=
		\begin{cases}
			1,  &\quad\text{if }  \left|x\right| <1,\\
			
			0, 	&\quad\text{if } \left|x\right|\geq 2. \\
		\end{cases}
	\end{eqnarray*}
	
	\noindent
	We define, for any $\varepsilon>0$  and $i\in I$, the function	 
	
	$$ 	 \phi_{i,\varepsilon} := \phi \Big(\frac{x-x_i}{\varepsilon}\Big),\quad \  \hbox{for all} \  x \in  \mathbb{R}^N.$$
	
	\noindent
	Note that $\phi_{i,\varepsilon} \in C^{\infty}_0( \mathbb{R}^N, \left[ 0,1\right]  )$, $|\nabla \phi_{i,\epsilon}|_\infty\leq \frac{2}{\varepsilon}$ and 
	\begin{eqnarray*}
		\label{ineq1}
		\phi_{i,\varepsilon}(x)=
		\begin{cases}
			1,  &  x\in  B_{\varepsilon}(x_i),\\
			
			0, 	&  x\in \mathbb{R}^N\setminus B_{2\varepsilon}(x_i). \\
		\end{cases}
	\end{eqnarray*}
   It is clear that $\{v_{n_k}\phi_{i,\varepsilon}\}$ is bounded in $X_{G_1}$. From this, we can conclude
    that $\langle E'_{\uplambda_{n_k}}(u_{n_k},v_{n_k}),(0,v_{n_k}	\phi_{i,\varepsilon})\rangle\rightarrow 0$ as $ n_k \rightarrow +\infty$, that is, 
	we obtain		
	\begin{align*}
		\langle E'_{\uplambda_{n_k}}(u_{n_k},v_{n_k}),(0,v_{n_k}	\phi_{i,\varepsilon})\rangle&= 	K\left(
		\mathcal{B}(v_{n_k})\right)\int_{\mathbb{R}^N } \Big( a_1(|\nabla v_{n_k}| ^{p(x)}) |\nabla v_{n_k}| ^{p(x)-2}\nabla v_{n_k}\nabla (v_{n_k}\phi_{i,\varepsilon})  \\
		&\quad + b(x)a_2(| v_{n_k}| ^{p(x)}) |v_{n_k}| ^{p(x)-2} v_{n_k} (v_{n_k}\phi_{i,\varepsilon})\Big) \,dx - \int_{\mathbb{R}^N  }|v_{n_k}|^{r(x)-2}v_{n_k} (v_{n_k}\phi_{i,\epsilon})\,dx 
		\\
		&\quad	- \int_{\mathbb{R}^N  }\uplambda (x)  \frac{ \partial \mathcal{F}}{\partial v}(x,u_{n_k},v_{n_k})v_{n_k}\phi_{i,\epsilon}\,dx   \rightarrow 0 \text{ as } n_k\rightarrow +\infty.
	\end{align*} 
	
	\noindent
	That is, 
	\begin{multline}\label{3.2}
		K\left(\mathcal{B}(v_{n_k})\right)\int_{\mathbb{R}^N  } a_1(|\nabla v_{n_k}| ^{p(x)}) |\nabla v_{n_k}| ^{p(x)-2}\nabla v_{n_k}\nabla \phi_{i,\varepsilon} v_{n_k} \,dx = -	K\left(
		\mathcal{B}(v_{n_k})\right)\int_{\mathbb{R}^N } \Big(a_1(|\nabla v_{n_k}| ^{p_(x)}) |\nabla v_{n_k}| ^{p(x)} \\
		+ b(x) a_2(| v_{n_k}| ^{p_(x)}) | v_{n_k}| ^{p(x)} \Big)\phi_{i,\varepsilon}\,dx +\int_{\mathbb{R}^N }|v_{n_k}|^{r(x)}\phi_{i,\epsilon}\,dx +\int_{\mathbb{R}^N  }\uplambda (x)  \frac{ \partial \mathcal{F}}{\partial v}(x,u_{n_k},v_{n_k})v_{n_k}\phi_{i,\epsilon}\,dx
		+ o_{n_k}(1).
	\end{multline} 
	
	Now, we shall prove that 	
	\begin{align}\label{3.3}
		\lim_{\varepsilon \to 0} \left\lbrace 
		\limsup_{n_k\to +\infty }	K\left(\mathcal{B}(v_{n_k})\right) \int_{\mathbb{R}^N  } a_1(|\nabla v_{n_k}| ^{p(x)}) |\nabla v_{n_k}| ^{p(x)-2}\nabla v_{n_k}\nabla \phi_{i,\varepsilon} v_{n_k} \,dx \right\rbrace  	& = 0.
	\end{align} 
	
	\noindent
	Note that, due to the hypotheses $(\textbf{\textit{H}}_{a_2})$ enough to show that	
	\begin{align}\label{3.4}
		\lim_{\varepsilon \to 0} \left\lbrace 
		\limsup_{n_k\to +\infty }	K\left(\mathcal{B}(v_{n_k})\right)\int_{\mathbb{R}^N  }  |\nabla v_{n_k}| ^{p(x)-2}\nabla v_{n_k}\nabla \phi_{i,\varepsilon} v_{n_k} \,dx \right\rbrace  	& = 0,
	\end{align} 
	and
	\begin{align}\label{3.5}
		\lim_{\varepsilon \to 0} \left\lbrace 
		\limsup_{n_k\to +\infty }	K\left(\mathcal{B}(v_{n_k})\right)\int_{\mathbb{R}^N  }  |\nabla v_{n_k}| ^{q(x)-2}\nabla v_{n_k}\nabla \phi_{i,\varepsilon} v_{n_k} \,dx \right\rbrace  	& = 0,
	\end{align}
	
	\noindent
	First, by using the H\"{o}lder inequality, we have
	\begin{align*}
		\left| \int_{\mathbb{R}^N}  |\nabla v_{n_k}| ^{p(x)-2}\nabla v_{n_k}\nabla\phi_{i,\varepsilon} v_{n_k}\,dx\right|
		&	\leq 2\left|  \left|\nabla v_{n_k} \right|^{p(x)-1}\right| _{\frac{p(x)}{p(x)-1}}\left| \nabla\phi_{i,\varepsilon} v_{n_k}\right| _{p(x)},
	\end{align*} 
  given that $\left\lbrace v_{n_k}\right\rbrace $ is bounded, the sequence of real values $\left| \left|\nabla v_{n_k} \right|^{p(x)-1}\right| _{\frac{p(x)}{p(x)-1}}$ is also bounded. Therefore, there exists a positive constant $C$ such that	
	\begin{align*}
		\left| \int_{\mathbb{R}^N  }  |\nabla  v_{n_k}| ^{p(x)-2}\nabla  v_{n_k}\nabla\phi_{i,\varepsilon}  v_{n_k} \,dx\right|
		&	\leq  C|\nabla\phi_{i,\varepsilon} v_{n_k}|_{p(x)}.
	\end{align*} 
	
	Moreover $\left\lbrace v_{n_k}\right\rbrace $ is bounded in $W^{1,p(x)}_{b}(B_{2\varepsilon}(x_i))$, then there exists a subsequence denoted again $\left\lbrace v_{n_k}\right\rbrace $  weakly converges to $v$ in $L^{p(x)}(B_{2\varepsilon}(x_i))$. Hence 	
	\begin{align*}
		\limsup_{n_k \to +\infty }	\left| \int_{\mathbb{R}^N }  |\nabla v_{n_k}| ^{p(x)-2}\nabla v_{n_k}\nabla\phi_{i,\varepsilon} v_{nk}\,dx\right|
		&	\leq  C|\nabla\phi_{i,\varepsilon} v_{n_k}|_{p(x)}\\
		&	\leq 2C \limsup_{\varepsilon \to 0}
		||\nabla\phi_{i,\varepsilon}|^{p(x)}|_{(\frac{p^{\star}(x)}{p(x)})^{'},B_{2\varepsilon}(x_i)} ||v|^{p(x)}|_{\frac{p^{\star}(x)}{p(x)},B_{2\varepsilon}(x_i)}	\\
		&	\leq 2C	\limsup_{\varepsilon \to 0 } ||\nabla\phi_{i,\varepsilon}|^{p(x)}|_{\frac{N}{p(x)},B_{2\varepsilon}(x_i)} ||v|^{p(x)}|_{\frac{N}{N-p(x)},B_{2\varepsilon}(x_i)}.
	\end{align*}	
	Note that	
	\[
	\int_{B_{2\varepsilon}(x_i) } 
	(|\nabla\phi_{i,\varepsilon}|^{p(x)})^{(\frac{p^{\star}(x)}{p(x)})'}dx = \int_{B_{2\varepsilon}(x_i) } |\nabla\phi_{i,\varepsilon}|^{N}dx
	\leq \Big(\frac{2}{\varepsilon}\Big)^N meas (B_{2\varepsilon}(x_i))=\frac{4^N}{N}\upomega_N,
	\]
	where $\upomega_N$ is the surface area of an $N$-dimensional unit sphere. Since  $ \int_{B_{2\varepsilon}(x_i) } (| v_{nk}|^{p(x)})^{\frac{p^{\star}(x)}{p(x)}}dx \to 0$ as $\varepsilon \to 0$, we can conclude that $|\nabla\phi_{i,\varepsilon}  v_{n_k}|_{p(x)}\to 0$, which implies
	\begin{equation}\label{con}
		\lim_{\varepsilon \to 0} \left\lbrace 
		\limsup_{n_k\to +\infty }\left| 
		\int_{\mathbb{R}^N  }  |\nabla  v_{nk}| ^{p(x)-2}\nabla  v_{nk}\nabla\phi_{i,\epsilon}  v_{nk} \,dx \right| \right\rbrace = 0.
	\end{equation}	
	
	Since  $\left\lbrace  v_{nk}\right\rbrace $  is bounded in $W_{b}^{1,p(x)}(\mathbb{R}^N )$, we may assume that  $\mathcal{B}(v_{nk}) \to t\geq 0 $ as $ n_k \to +\infty$. We note that  $K(t)$ is 
	continuous, we then have
	
	\[ 
	K\Big(\mathcal{B}(v_{nk}) \Big) \to K(t)\geq \mathfrak{K}_0>0, \quad \text{as } n_k \to +\infty. 
	\]     
	
	\noindent
	Hence, by relation \eqref{con}, we obtain
	\begin{equation}\label{3.7}
		\lim_{\varepsilon \to 0} \left\lbrace 
		\limsup_{n_k\to +\infty }K\Big(\mathcal{B}(v_{nk}) \Big)
		\int_{\mathbb{R}^N  }  |\nabla  v_{nk}| ^{p(x)-2}\nabla  v_{nk}\nabla\phi_{i,\epsilon}  v_{nk} \,dx\right\rbrace   = 0.
	\end{equation}
	\noindent
	Analogously, we verify relation \eqref{3.5}. Therefore, we conclude the proof
	of relation \eqref{3.3}.

	Similarly, we can also get
	\begin{equation}\label{3.8}
		\lim_{\varepsilon \to 0 }
		\int_{\mathbb{R}^N }\uplambda(x) \frac{ \partial \mathcal{F}}{\partial v}(x,u_{n_k},v_{n_k})\phi_{i,\epsilon} v_{n_k} dx=0, \text{ as } k \rightarrow +\infty.
	\end{equation}
		Indeed, by use H\"{o}lder's inequality with assumption $(\textit{\textbf{F}}_2)$ and since $0\leq \phi_{i,\varepsilon} \leq 1$,  we obtain	
	\begin{align*}
		\lim_{\varepsilon \to 0 }
		\int_{\mathbb{R}^N }\uplambda (x)\frac{ \partial \mathcal{F}}{\partial v}(x,u_{n_k},v_{n_k})\phi_{i,\epsilon} v_{n_k} dx &\leq 
		\lim_{\varepsilon \to 0 }\sup_{x\in \mathbb{R}^N}\uplambda(x)\int_{\mathbb{R}^N   }	\left(  f_{1}(x)| u_{n_k}|^{\ell(x)}+  f_{1}(x)| v_{n_k}|^{\ell(x)}\right) \phi_{i,\varepsilon} v_{n_k}dx,\\
		&\leq \lim_{\varepsilon \to 0 }\uplambda^+\int_{\mathbb{R}^N }	\left(   f_{1}(x)| u_{n_k}|^{\ell(x)}+  f_{1}(x)| v_{n_k}|^{\ell(x)}\right)|\phi_{i,\varepsilon} v_{n_k}|dx\\
		&\leq \lim_{\varepsilon \to 0 }c_1  \Big(
		|f_1|_{l(x)}||u_{n_k}|^{\ell}|_{h^{\star}(x)}+|f_2|_{\l(x)}||v_{n_k}|^{\ell}|_{h^{\star}(x)}\Big)|\phi_{i,\epsilon} v_{n_k}|_{{h^{\star}(x)}}.
	\end{align*}	
	The above propositions yield
	\begin{align*}
		\lim_{\varepsilon \to 0 }
		\int_{\mathbb{R}^N }\uplambda (x)\frac{ \partial \mathcal{F}}{\partial v}(x,u_{n_k},v_{n_k})\phi_{i,\epsilon} v_{n_k} dx &\leq \lim_{\varepsilon \to 0 }c_1
		\Big(
		|f_1|_{l(x)}\|u_{n_k}\|_{h(x)}^{\ell}+|f_2|_{l(x)}\|v_{n_k}\|_{h(x)}^{\ell}\Big)\| v_{k_n}\|_{h(x),B_{2\varepsilon}(x_i)} .	\end{align*}
	and this last goes to zero because of	
	$$|f_1|_{\ell(x)}\|u_{n_k}\|_{h(x)}^{\ell}+|f_2|_{\ell(x)}\|v_{n_k}\|_{h(x)}^{\ell}<\infty.$$

	Since $\phi_{i,\varepsilon}$ has compact support, going to the limit $n_k \to +\infty$ and letting $\varepsilon \to 0$ in relation \eqref{3.2}, from relations \eqref{3.3} and \eqref{3.4}, we get 
		\begin{align*}
		0 &=\upnu_i-\lim_{\varepsilon \to 0} \Biggl(
		\limsup_{n_k\to +\infty } K\Big(\mathcal{B}(v_{nk}) \Big)	\int_{\mathbb{R}^N } \Big(a_1(|\nabla v_{n_k}| ^{p_(x)}) |\nabla v_{n_k}| ^{p(x)}\phi_{i,\varepsilon} + b(x) a_2(| v_{n_k}| ^{p_(x)}) | v_{n_k}| ^{p(x)}\phi_{i,\varepsilon} \Big)\,dx\Biggr) , \\
		&\leq\upnu_i-\lim_{\varepsilon \to 0} \Bigg(
		\limsup_{n_k\to +\infty } \mathfrak{K}_0
		\Big( 	\int_{\mathbb{R}^N } \Big[ a_1(|\nabla v_{n_k}| ^{p_(x)}) |\nabla v_{n_k}| ^{p(x)}+b(x) a_2(| v_{n_k}| ^{p_(x)}) | v_{n_k}| ^{p(x)}\Big]\phi_{i,\varepsilon} \,dx\Big)	\Bigg),
	\end{align*}
	and by applying assumption $(\textbf{\textit{H}}_{a_2})$, we obtain
	\begin{multline}\label{4.11}	
		0\leq \upnu_i-\lim_{\varepsilon \to 0} \Bigg( 
		\limsup_{n_k\to +\infty } \mathfrak{K}_0
		\Big(\min \left\{\kappa_1^0,\kappa_2^0\right\}\int_{\mathbb{R}^N}\Big( |\nabla v_{n_k}|^{p(x)}+ b(x)| v_{n_k}|^{p(x)}\Big)dx\\
		+\min \left\{\kappa_1^2,\kappa_2^2\right\}\mathcal{H}(\kappa_\star^3) \Big(|\nabla u_{n_k}|^{q(x)}+ b(x)| u_{n_k}|^{q(x)}\Big)dx,\Big)\Bigg).
	\end{multline}	
	Note that, when $\kappa_\star^3=0$, we have $h(x)=p(x)$. Hence, from Theorem \ref{ccpp} and the aforementioned arguments, we obtain
	\begin{align*}
		0 &\leq \upnu_i-  \mathfrak{K}_0\min \left\{\kappa_1^0,\kappa_2^0\right\}\lim_{\varepsilon \to 0} 
		\int_{\mathbb{R}^N  }\phi_{i,\varepsilon}\,d\upmu \\
		&\leq \upnu_i -\mathfrak{K}_0\min \left\{\kappa_1^0,\kappa_2^0\right\}\Big(\upmu_{i} - \lim_{\varepsilon \to 0} 
		\int_{\mathbb{R}^N  }(|\nabla v|^{p(x)}+ b(x)| v|^{p(x)})\phi_{i,\varepsilon}\,dx\Big).
	\end{align*}	
	By using the Lebesgue dominated convergence theorem, we get
	$$\lim_{\varepsilon \to 0} 
	\int_{\mathbb{R}^N  }(|\nabla v|^{p(x)}+ b(x)| v|^{p(x)})\phi_{i,\varepsilon}\,dx=0.$$	
	Then, we get	
	\begin{equation}\label{k0}
	\mathfrak{K}_0\min \left\{\kappa_1^0,\kappa_2^0\right\}\upmu_{i}	\leq \upnu_i.
	\end{equation} 
	
	On the other hand, if $\kappa_\star^3>0$, we have  $h(x)=q(x)$ Therefore, it follows from Theorem \ref{ccpp} and relation \eqref{4.11} that
	\begin{align*}
		0 &\leq \upnu_i - \mathfrak{K}_0\min \left\{\kappa_1^0,\kappa_2^0\right\}\mathcal{H}(\kappa_\star^3)\lim_{\varepsilon \to 0} \left[ \limsup_{n_k \to \infty}
		\Big(\int_{\mathbb{R}^N }\Big(|\nabla v_{n_k}| ^{q(x)}+b(x)| v_{n_k}| ^{q(x)}\Big)\phi_{i,\varepsilon}\,dx\Big) \right]  \\
		&\leq \upnu_i - 	 \mathfrak{K}_0\min \left\{\kappa_1^2,\kappa_2^2\right\}\mathcal{H}(\kappa_\star^3)\lim_{\varepsilon \to 0} 
		\int_{\mathbb{R}^N  }\phi_{i,\varepsilon}\,d\upmu\\
		&\leq \upnu_i -	 \mathfrak{K}_0\min\left\{\kappa_1^2,\kappa_2^2\right\}\mathcal{H}(\kappa_\star^3)\Big(\upmu_{i} - \lim_{\varepsilon \to 0} 
		\int_{\mathbb{R}^N  }(|\nabla v|^{q(x)}+ b(x)| v|^{q(x)})\phi_{i,\varepsilon}\,dx\Big).
	\end{align*}	
and by applying the Lebesgue dominated convergence theorem, we obtain
	\begin{equation*}
		\lim_{\varepsilon \to 0} 
		\int_{\mathbb{R}^N  }(|\nabla v|^{q(x)}+ b(x)| v|^{q(x)})\phi_{i,\varepsilon}\,dx=0.
	\end{equation*}
	Then, we get
	\begin{equation}\label{k1}
		 \mathfrak{K}_0\min \left\{\kappa_1^2,\kappa_2^2\right\}\mathcal{H}(\kappa_\star^3)\upmu_{i}	\leq \upnu_i.
	\end{equation} 	
	Now, by combining relations \eqref{k0} and \eqref{k1}, we have
	\begin{equation}\label{mus}
	 \mathfrak{K}_0\Big((1-\mathcal{H}(\kappa_i^3)) \min \left\{\kappa_1^0,\kappa_2^0\right\}+ \mathcal{H}(\kappa_\star^3)\min \left\{\kappa_1^2,\kappa_2^2\right\}\Big)\upmu_{i} \leq \upnu_i.
	\end{equation}	
	Using relation \eqref{g11}, we obtain  
	$$	\upnu_i\leq 	C^{\ast} \max \Big\{\Big(\frac{\upnu_i}{ \mathfrak{K}_0 D}\Big)^{\frac{r^+}{h^-}},\Big(\frac{\upnu_i}{ \mathfrak{K}_0D}\Big)^{\frac{r^-}{h^+}}\Big\},$$
	where $D=(1-\mathcal{H}(\kappa_\star^3)) \min \left\{\kappa_1^0,\kappa_2^0\right\}+ \mathcal{H}(\kappa_\star^3)\min \left\{\kappa_1^2,\kappa_2^2\right\}$.	
	which implies that $\upnu_{i}=0$ or 
	\begin{equation}\label{chm1}\upnu_{i}\geq \max\Big\{\Big(\frac{ \mathfrak{K}_0 D}{ S^{\frac{h^-}{r^+}}}\Big)^{\frac{r^+}{r^+-h^-}},\Big(\frac{ \mathfrak{K}_0 D}{ S^{\frac{h^-}{r^-}}}\Big)^{\frac{r^-}{r^--h^-}}\Big\} 
	\end{equation}
	for all $i \in I$, which implies that $I$ is finite. The claim is therefore proved.	
	
	To analyze the concentration at $\infty$,  we choose a suitable cut-off function $\psi \in C_0^{\infty}(\mathbb{R}^N,[0,1])$ such that  $\psi(x)\equiv0$ on  $B_R(0)$ and $\psi(x)\equiv1$ on  $\mathbb{R}^N \backslash B_{2R}(0)$. We set $\psi_R(x)=\psi(\frac{x}{R})$, we can easily observe that  $\{v_{n_k}\psi_R\}$ is bounded in $X_{G_1}$ and 
	$\lim_{n_k\to \infty}\left\langle E'_{\uplambda_{n_k}}(u_{n_k},v_{n_k}),(0,v_{n_k}\psi_R)\right\rangle =0,$	
	\begin{align*}
		\langle E'_{\uplambda_{n_k}}(u_{n_k},v_{n_k}),(0,v_{n_k}	\psi_R)\rangle&= 	K\left(\mathcal{B}(v_{n_k})\right)\int_{\mathbb{R}^N }\Big( a_1(|\nabla v_{n_k}| ^{p(x)}) |\nabla v_{n_k}| ^{p(x)-2}\nabla v_{n_k}\nabla (v_{n_k}\psi_R)\\
		&\quad + b(x)a_2(| v_{n_k}| ^{p(x)}) |v_{n_k}| ^{p(x)-2} v_{n_k} (v_{n_k}\psi_R)\Big) \,dx - \int_{\mathbb{R}^N  }|v_{n_k}|^{r(x)-2}v_{n_k} (v_{n_k}\psi_R)\,dx 
		\\
		&\quad	- \int_{\mathbb{R}^N  }\uplambda(x)   \frac{ \partial \mathcal{F}}{\partial v}(x,u_{n_k},v_{n_k})v_{n_k}\psi_R\,dx   \rightarrow 0 \text{ as } n_k\rightarrow +\infty.
	\end{align*} 	
	In other words,
	\begin{multline}\label{3.2a}
		K\left(\mathcal{B}_1(v_{n_k})\right)\int_{\mathbb{R}^N  } a_1(|\nabla v_{n_k}| ^{p(x)}) |\nabla v_{n_k}| ^{p(x)-2}\nabla v_{n_k}\nabla \psi_R v_{n_k} \,dx = \int_{\mathbb{R}^N  }\uplambda(x) \frac{ \partial \mathcal{F}}{\partial v}(x,u_{n_k},v_{n_k})v_{n_k}\psi_R\,dx
		\\
	+ \int_{\mathbb{R}^N }|v_{n_k}|^{r(x)}\psi_R\,dx	-
		K\left(
		\mathcal{B}(v_{n_k})\right)\int_{\mathbb{R}^N } \Big(a_1(|\nabla v_{n_k}| ^{p_(x)}) |\nabla v_{n_k}| ^{p(x)}+b(x) a_2(| v_{n_k}| ^{p_(x)}) | v_{n_k}| ^{p(x)}\Big)\phi_{i,\varepsilon} \,dx +o_{n_k}(1).
	\end{multline} 	
As in the previous proof, we can find that 
	$\lim_{n_k\to \infty} \left| \nabla \psi_R v_{n_k}\right|_{p(x)}=0$ when $R\to \infty$, and 
	\begin{align*}
		\left| \int_{\mathbb{R}^N}  |\nabla v_{n_k}| ^{p(x)-2}\nabla v_{nk}\psi_R  v_{n_k}\,dx\right|
		&	\leq 2\left|  \left|\nabla v_{n_k} \right|^{p(x)-1}\right| _{\frac{p(x)}{p(x)-1}}\left| \nabla\psi_R  v_{n_k}\right|_{p(x)},
	\end{align*} 
	since $\left\lbrace v_{n_k}\right\rbrace $ is bounded, the real-valued sequence $ \left|  \left|\nabla v_{n_k} \right|^{p(x)-1}\right| _{\frac{p(x)}{p(x)-1}}$ is also bounded, then 
	
	\begin{equation}\label{kk0}
		\lim_{R\to \infty}\limsup_{n_k\to +\infty }K\left(
		\mathcal{B}(v_{n_k})\right)\int_{\mathbb{R}^N} | |\nabla v_{n_k}| ^{p(x)-2}\nabla v_{n_k}\psi_R  v_{n_k} |\,dx =0.
	\end{equation}	
	Similarly, we can also get 	
	\begin{equation}\label{kk1}
		\lim_{R\to \infty}\limsup_{n_k\to +\infty }K\left(
		\mathcal{B}(v_{n_k})\right)\int_{\mathbb{R}^N} | |\nabla v_{n_k}| ^{q(x)-2}\nabla v_{n_k}\psi_R  v_{n_k} |\,dx =0.
	\end{equation}	
	Therefore, we have 	
	$$\lim_{R\to \infty}\limsup_{n_k\to +\infty }\int_{\mathbb{R}^N  } a_1(|\nabla v_{n_k}| ^{p(x)}) |\nabla v_{n_k}| ^{p(x)-2}\nabla v_{n_k}\nabla \psi_R v_{n_k} \,dx =0.$$	
	Note that $v_{n_k}\to v$ weakly in $X_{G_1}$, so
	$\int_{\mathbb{R}^N}\uplambda(x)\frac{ \partial \mathcal{F}}{\partial v}(x,u,v)(v_{n_k}-v)\psi_Rdx\to 0$. As
	\begin{align*}
		\left| \int_{\mathbb{R}^N}\uplambda (x)(\frac{ \partial \mathcal{F}}{\partial v}(x,u_{n_k},v_{n_k})-\frac{ \partial \mathcal{F}}{\partial v}(x,u,v))v_{n_k}\psi_Rdx\right| &\leq c|(\frac{ \partial \mathcal{F}}{\partial v}(x,u_{n_k},v_{n_k})-\frac{ \partial \mathcal{F}}{\partial v}(x,u,v))\psi_R|_{(h^{\star}(x))'}|v_{n_k}|_{h^{\star}(x)},\\
		&\leq c|\frac{ \partial \mathcal{F}}{\partial v}(x,u_{n_k},v_{n_k})-\frac{ \partial \mathcal{F}}{\partial v}(x,u,v)|_{L^{(h^{\star}(x))'}(\mathbb{R}^N\backslash B_R(0))},
	\end{align*}
	According to assumption $(\textbf{\textit{F}}_1)$, analogous to Fu and Zhang to \cite[Theorem 4.3]{Fu0}, for any $\epsilon>0$, there exists $R_1>0$ such that when $R>R_1$, $|\frac{ \partial \mathcal{F}}{\partial v}(x,u_{n_k},v_{n_k})-\frac{ \partial \mathcal{F}}{\partial v}(x,u,v)|_{L^{(h^{\star}(x))'}(\mathbb{R}^N\backslash B_R(0))}<\epsilon$, for any $n\in \mathbb{N}$. 
	
	\noindent
	Note that $\int_{\mathbb{R}^N}\frac{ \partial \mathcal{F}}{\partial v}(x,u,v)v\psi_Rdx\to 0$ as $R\to \infty$. Thus we obtain that
	\begin{align*}
		\lim_{R\to \infty}&\limsup_{k\to +\infty }\int_{\mathbb{R}^N}\uplambda (x)\frac{ \partial \mathcal{F}}{\partial v}(x,u_{nk},v_{nk})v_{nk}\psi_Rdx \\
		\quad&\leq\sup_{ x\in\mathbb{R}^N}\uplambda (x) \lim_{R\to \infty}\limsup_{n_k\to +\infty }\int_{\mathbb{R}^N}\frac{ \partial \mathcal{F}}{\partial v}(x,u_{nk},v_{nk})v_{nk}\psi_Rdx
		\\
		\quad&=\uplambda^+\lim_{R\to \infty}\limsup_{n_k\to +\infty }\int_{\mathbb{R}^N}(\frac{ \partial \mathcal{F}}{\partial v}(x,u_{n_k},v_{n_k})-\frac{ \partial \mathcal{F}}{\partial v}(x,u,v))v_{n_k}\psi_R +\frac{ \partial \mathcal{F}}{\partial v}(x,u,v))(v_{n_k}-v)
		+\frac{ \partial \mathcal{F}}{\partial v}(x,u,v)v\psi_Rdx,\\
		\quad&=\uplambda^+\lim_{R\to \infty}\Big(\limsup_{n_k\to +\infty }\int_{\mathbb{R}^N}(\frac{ \partial \mathcal{F}}{\partial v}(x,u_{n_k},v_{n_k})-\frac{ \partial \mathcal{F}}{\partial v}(x,u,v))v_{n_k}\psi_Rdx +\int_{\mathbb{R}^N}\frac{ \partial \mathcal{F}}{\partial v}(x,u,v)v\psi_Rdx
		\Big),\\
		\quad&=\uplambda^+\Big(\lim_{R\to \infty}\limsup_{n_k\to +\infty }\int_{\mathbb{R}^N}(\frac{ \partial \mathcal{F}}{\partial v}(x,u_{n_k},v_{n_k})-\frac{ \partial \mathcal{F}}{\partial v}(x,u,v))v_{n_k}\psi_Rdx +\lim_{R\to \infty}\int_{\mathbb{R}^N}\frac{ \partial \mathcal{F}}{\partial v}(x,u,v)v\psi_Rdx\Big),\\
		\quad&=0.
	\end{align*}
	Since $\psi_{R}$ has compact support, going to the limit $n_k \to +\infty$ and letting $R \to \infty$ in relation \eqref{3.2a}, we get 
	\begin{equation*}\label{muss}
		\mathfrak{K}_0 \Big((1-\mathcal{H}(\kappa_\star^3)) \min \left\{\kappa_1^0,\kappa_2^0\right\}+ \mathcal{H}(\kappa_\star^3)\min \left\{\kappa_1^2,\kappa_2^2\right\}\Big)\upmu_\infty \leq \upnu_\infty.
	\end{equation*}	
	According to Theorem \ref{ccppp}, we have	
	either $\upnu_{\infty}=0$ or 
	\begin{equation}\label{chm2}
		\upnu_{\infty}\geq \max\left\{\Big(\frac{\mathfrak{K}_0 D}{ S^{\frac{h^-}{r^+}}}\Big)^{\frac{r^+}{r^+-h^-}},\Big(\frac{\mathfrak{K}_0 D}{ S^{\frac{h^-}{r^-}}}\Big)^{\frac{r^-}{r^--h^-}}\right\}
	\end{equation}
	for all $i \in I$, where $D= (1-\mathcal{H}(\kappa_\star^3)) \min \left\{\kappa_1^0,\kappa_2^0\right\}+ \mathcal{H}(\kappa_\star^3)\min \left\{\kappa_1^2,\kappa_2^2\right\}$.

	Next, we claim that relations \eqref{chm1} and \eqref{chm2} cannot occur. If the case \eqref{chm2}  holds, for some $i\in I$, then by using  $(\textbf{\textit{H}}_{a_4}), (\textbf{\textit{K}}_1)-(\textbf{\textit{K}}_2)$ and  $(\textbf{\textit{F}}_2)$, we get that 
	\begin{align*}
		c&=\lim_{n_k\to \infty}\Big(E_{\uplambda_{n_k}}(0,v_{n_k})-
		\langle E'_{\uplambda_{n_k}}(u_{n_k},v_{n_k}),(0,\frac{v_{n_k}}{
			\vartheta (x)} ) \rangle\Big)	\\
		&=\widehat{K}\left(
		\mathcal{B}(v_{n_k})\right) -\int_{\mathbb{R}^N}\frac{1}{r(x)}| v_{n_k}|^{r(x)}dx-\int_{\mathbb{R}^N}\uplambda(x)\mathcal{F}(x,0,v_{n_k})dx
	 -K\left(\mathcal{B}_1(v_{n_k})\right)\int_{\mathbb{R}^N }  \Big(\mathcal{A}_1(\nabla v_{n_k})\nabla(\frac{v_{n_k}}{\vartheta(x)})\\	&\quad + b(x)\mathcal{A}_2(v_{n_k}) \frac{v_{n_k}}{\vartheta(x)}\Big)dx  
		 + \int_{\mathbb{R}^N}\frac{1}{\vartheta(x)}| v_{n_k}|^{r(x)}dx+\int_{\mathbb{R}^N}\uplambda(x)\frac{ \partial \mathcal{F}}{\partial v}(x,0,v_{n_k})\frac{v_{n_k}}{\vartheta(x)}dx, \\
		&\geq \mathfrak{K}_0\int_{\mathbb{R}^N}\Big( \dfrac{\gamma }{\max\{\alpha_1,\alpha_2\}p(x)}-\frac{1}{\vartheta(x)}\Big)
		\Big[a_1(|\nabla v_{nk}|^{p(x)})|\nabla v_{nk}|^{p(x)}+b(x)a_2(| v_{nk}|^{p(x)})| v_{nk}|^{p(x)}\Big]dx\\
		&\quad +\mathfrak{K}_0\int_{\mathbb{R}^N}\dfrac{v_{nk}}{\vartheta(x)^2}a_1(|\nabla v_{nk}|^{p(x)})|\nabla v_{nk}|^{p(x)-2}\nabla v_{nk}\nabla\vartheta dx\\
		&\quad +\int_{\mathbb{R}^N}\Big( \dfrac{1}{\vartheta(x)}-\frac{1}{r(x)}\Big)| v_{nk}|^{r(x)}dx +\uplambda^-\int_{\mathbb{R}^N}\Big(\frac{ \partial \mathcal{F}}{\partial v}(x,0,v_{nk})\frac{v_{nk}}{\vartheta(x)}
		-\mathcal{F}(x,0,v_{nk})\Big)dx,\\
		&\geq \Big( \dfrac{1}{\vartheta^-}-\frac{1}{r^-}\Big)\upnu_{\infty}.     
	\end{align*}	
	So, by relation \eqref{chm2}, we have 	
	\begin{align*}
		c	&\geq \Big( \dfrac{1}{\vartheta^-}-\frac{1}{r^-}\Big)\max\Big\{\Big(\frac{ \mathfrak{K}_0 D}{ S^{\frac{h^-}{r^+}}}\Big)^{\frac{r^+}{r^+-h^-}},\Big(\frac{\mathfrak{K}_0 D}{ S^{\frac{h^-}{r^-}}}\Big)^{\frac{r^-}{r^--h^-}}\Big\}.
	\end{align*}
	This is impossible. Therefore, $\upnu_{\infty}=0$ for all $i\in I$. Similarly, we can prove that \eqref{chm1} cannot occur for
	any
	 $i$. Then
	$$ \limsup_{n_k \to +\infty }\int_{\mathbb{R}^N }| v_{nk}|^{r(x)}dx \to \int_{\mathbb{R}^N }| v|^{r(x)}dx.$$
	Note that if $|v_{n_k}-v|^{r(x)}\leq 2^{r^+}(|v_{n_k}|^{r(x)} + |v|^{r(x)}),$ then by the Fatou Lemma, we have	
	\begin{align*}
		\int_{\mathbb{R}^N }2^{r^+}|v|^{r(x)}dx&=	\int_{\mathbb{R}^N }\liminf_{n_k \to +\infty } \Big(2^{r^+}(|v_{n_k}|^{r(x)} + |v|^{r(x)}) -|v_{n_k}-v|^{r(x)}
		\Big)dx,\\
		&\leq \liminf_{n_k \to +\infty }	\int_{\mathbb{R}^N}\Big(2^{r^+}|v_{n_k}|^{r(x)} + 2^{r^+}|v|^{r(x)} -|v_{n_k}-v|^{r(x)}
		\Big)dx,\\
		&\leq	\int_{\mathbb{R}^N} 2^{r^{+}+1}|v|^{r(x)}dx - \limsup_{n_k \to +\infty }\int_{\mathbb{R}^N}|v_{n_k}-v|^{r(x)} dx.
	\end{align*}
	Thus,
	$ \int_{\mathbb{R}^N}|v_{n_k}-v|^{r(x)} dx \to 0,$  we have $v_{n_k} \to v$ strongly in $L^{r(x)}(\mathbb{R}^N).$
	
	Now, let us define the operator $\Phi$ as follows	
	$$\left[ \Phi(v),\tilde{v}\right]:= \int_{\mathbb{R}^N}\Big(\mathcal{A}_1(\nabla v)\nabla \tilde{v} + b(x)\mathcal{A}_2(v) \tilde{v}\Big)dx$$	
	for any $(v,\tilde{v})\in X_{G_1}\times X_{G_1} $. Using the Holder inequality and the condition $(\textit{\textbf{H}}_{a_2}),$ we can establish that 	
	$$\left| \left\langle \Phi(v),\tilde{v}\right\rangle \right| 
	\leq  c\left\| v\right\|^{q^--1}_{b,h}\left\| \tilde{v}\right\|_{b,h}.
	$$
	Thus, the linear functional
	$\Phi(v)$ is continuous on $X_{G_1}$ for each $v\in X_{G_1}$. Therefore, due to the weak convergence of  $v_{n_k}$ in $X_{G_1}$ , we obtain 
	\begin{equation}\label{keqa}
		\lim_{n_k\to\infty}\left\langle \Phi(v_{n_k}),v_0\right\rangle =\left\langle \Phi(v_{0}),v_0\right\rangle \quad \text{ and }
		\lim_{n_k\to\infty}\left\langle \Phi(v_{0}),v_{n_k}-v_0\right\rangle=0.
	\end{equation}	
	Clearly, $\left\langle \Phi(v_{n_k}),v_{n_k}-v_0\right\rangle\to 0$ as $n_k\to \infty$.  Hence, based on relation \eqref{keqa}, we can deduce that
	$$ \lim_{ n_k\to\infty}\left\langle \Phi(v_{n_k})-\Phi(v_{0}),v_{n_k}-v_0\right\rangle
	=\lim_{n_k\to\infty} \int_{\mathbb{R}^N}\Big(\mathcal{R}_n(x) +b(x)\mathcal{Q}_n(x)\Big)dx=0,
	$$	
	with
	$$\mathcal{R}_n(x)=\left\langle a_1(|\nabla v_{n_k}|^{p(x)})|\nabla v_{n_k}|^{p(x)-2}\nabla v_{n_k}- a_1(|\nabla v_0|^{p(x)})|\nabla v_0|^{p(x)-2}\nabla v, \nabla v_{n_k}-\nabla v_0\right\rangle$$	
	for all $x\in \mathbb{R}^N$ and  all $n \in \mathbb{N}$, and	
	$$\mathcal{Q}_n(x)=\left\langle a_2(|v_{n_k}|^{p(x)})|v_{n_k}|^{p(x)-2}v_{n_k}- a_2(|v_0|^{p(x)})|v_0|^{p(x)-2}v, v_{n_k}-v_0\right\rangle$$	
	for all $x\in \mathbb{R}^N$ and  all $n \in \mathbb{N}$. Hence, by applying some elementary inequalities (see, e.g., Hurtado et al. \cite[Auxiliary Results ]{Hurtado}), for any $ \upeta , \upxi \in \mathbb{R}^N ,$ 
	\begin{eqnarray}
		\label{ineq1}
		\begin{cases}
			|\upeta  -\upxi|^{p(x)}\leq c_{p}\left\langle a_i(|\upeta|^{p(x)})|\upeta|^{p(x)-2}\upeta  - a_i(|\upxi|^{p(x)})|\upxi|^{p(x)-2}\upxi ,\upeta -\upxi \right\rangle  &\quad\text{if } p(x) \geq2\\
			
			|\upeta -\upxi|^{2}\leq c (|\upeta| +|\upxi| )^{2-p(x)}\left\langle a_i(|\upeta|^{p(x)})|\upeta|^{p(x)-2}\upeta  - a_i(|\upxi|^{p(x)})|\upxi|^{p(x)-2}\upxi 
			,\upeta -\upxi\right\rangle	&\quad\text{if } 1<p(x) <2 \\
		\end{cases}
	\end{eqnarray}	
 By replacing $\upeta$ and $\upxi$ with $\nabla v_{n_k}$ and $\nabla v_0$ respectively and integrating over $\mathbb{R}^N$, we obtain 	
	$$\int_{\mathbb{R}^N} \mathcal{R}_n(x)dx \geq 
	C\int_{\left\lbrace x\in \mathbb{R}^N ; ~p(x)\geq 2\right\rbrace }|\nabla v_{n_k} -\nabla v_0|^{p(x)}dx
	$$		
	Thus, 
	\begin{equation} \label{PN1}
		\lim_{n_k\to \infty}\int_{\left\lbrace x\in \mathbb{R}^N ; ~p(x)\geq 2\right\rbrace }|\nabla v_{n_k} -\nabla v_0|^{p(x)}dx=0.
	\end{equation}

	On the other hand, by using relation \eqref{ineq1}, we get	
	$$\int_{\mathbb{R}^N} \mathcal{R}_n(x)dx \geq 
	C\int_{\left\lbrace x\in \mathbb{R}^N ;~ 1<p(x)< 2\right\rbrace }\sigma_1(x)^{p(x)-2}|\nabla v_{n_k} -\nabla v_0|^{2}dx
	$$		
	where $\sigma_1(x) =C(|v_{n_k}|+|\nabla v_0|)$. Therefore by  H\"{o}lder's inequality, we have
	\begin{multline*}
		\int_{\left\lbrace x\in \mathbb{R}^N ;~ 1<p(x)< 2\right\rbrace }|\nabla v_{n_k} -\nabla v_0|^{p(x)}dx
		= 	\int_{\left\lbrace x\in \mathbb{R}^N ;~ 1<p(x)< 2\right\rbrace }\sigma_1^{\frac{p(x)(p(x)-2)}{2}}\Big(\sigma_1^{\frac{p(x)(p(x)-2)}{2}}|\nabla v_{n_k} -\nabla v_0|^{p(x)} \Big)dx	\\
		\leq C \Vert \sigma_1^{\frac{p(x)(2-p(x))}{2}} \Vert_{L^{\frac{2}{2-p(x)}}(\left\lbrace x\in \mathbb{R}^N ;~ 1<p(x)< 2\right\rbrace)}
		\Vert \sigma_1^{\frac{p(x)(p(x)-2)}{2}}|\nabla v_{n_k} -\nabla v|^{p(x)} \Vert_{L^{\frac{2}{p(x)}}(\left\lbrace x\in \mathbb{R}^N ; ~1<p(x)< 2\right\rbrace)}\\
		\leq C\max\Biggl\{ \Vert \sigma_1\Vert^{\big(\frac{p(x)(p(x)-2)}{2}\big)^-}_{L^{p(x)}(\left\lbrace x\in \mathbb{R}^N ; ~1<p(x)< 2\right\rbrace)}, \Vert \sigma_1\Vert^{\big(\frac{p(x)(p(x)-2)}{2}\big)^+}_{L^{p(x)}(\left\lbrace x\in \mathbb{R}^N ; ~1<p(x)< 2\right\rbrace)}
		\Biggr\} \times\\
		\max\Biggl\{ \Big( 	\int_{\left\lbrace x\in \mathbb{R}^N ; ~1<p(x)< 2\right\rbrace}\sigma_1^{p(x)-2}|\nabla v_{n_k} -\nabla v_0|^{2}dx\Big)^{\frac{p^-}{2}}, \Big( 	\int_{\left\lbrace x\in \mathbb{R}^N ; ~ 1<p(x)< 2\right\rbrace}\sigma_1^{p(x)-2}|\nabla v_{n_k} -\nabla v_0|^{2}dx\Big)^{\frac{p^+}{2}}
		\Biggr\},
	\end{multline*}	
  As the last term on the right-hand side of the above inequality tends to zero, we can conclude	
	\begin{equation} \label{PN2}
		\lim_{n_k\to \infty}\int_{\left\lbrace x\in \mathbb{R}^N ; ~1<p(x)< 2\right\rbrace }|\nabla v_{n_k} -\nabla v_0|^{p(x)}dx=0.
	\end{equation}	
  Now, combining relation \eqref{PN1} with relation \eqref{PN2}, we obtain	
	\begin{equation*} 
		\lim_{n_k\to \infty}\int_{ \mathbb{R}^N }|\nabla v_{n_k} -\nabla v_0|^{p(x)}dx=0.
	\end{equation*}	
	The same arguments can be used to prove that
	\begin{equation*} 
		\lim_{n_k\to \infty}\int_{ \mathbb{R}^N }b(x)|v_{n_k} - v_0|^{p(x)}dx=0.
	\end{equation*}
   In conclusion, we have shown that the sequence $\left\lbrace v_{n_k}\right\rbrace$ converges strongly to $v_0$ in $X_{G_1}$. Therefore, we can conclude that $\left\lbrace \left( u_{n_k},v_{n_k}\right) \right\rbrace $ contains a subsequence converging strongly in $Z$.	
\end{proof}

Now, we are ready to prove Theorem \ref{theooo}.

\begin{proof} 
	The proof immediately follows from Theorem \ref{thhh}. 
	More precisely, it suffices to check the conditions of Theorem \ref{thhh}. Set
		$$Z=U\oplus V, \quad U=X_{G_1}\times \left\lbrace 0\right\rbrace , \quad  V= \left\lbrace 0\right\rbrace\times X_{G_1},$$
	and 
	$$ Y_0 =\left\lbrace 0\right\rbrace\times X_{G_1}^{(m)^\perp}, \quad Y_1=\left\lbrace 0\right\rbrace\times X_{G_1}^{(k)},$$
	where $m$ and $k$ are yet to be determined.	
	
\noindent	
Define a group action $G=\left\lbrace 1,\tau\right\rbrace \cong \mathbb{Z}_2$ by setting $\tau(u,v)=(-u,-v)$, then $\text{Fix}G=\left\lbrace 0\right\rbrace \times\left\lbrace 0\right\rbrace$  (also denote$\left\lbrace  0\right\rbrace$). 
It is clear that $U$ and $V$ are $G$-invariant closed subspaces of $Z$, and $Y_0$ and $Y_1$ are $G$-invariant closed subspaces of $V$  and 
$\text{codim}_{V}Y_0=m$, $\text{dim}Y_1=k$.

Let 
$$ \Sigma:= \left\lbrace  A\subset Z\backslash\left\lbrace  0\right\rbrace : \text{ A is closed in } X \text{ and } (u,v)\in A \implies (-u,-v)\in A\right\rbrace .$$

\noindent
Define an index $\chi$ on $\Sigma$ by 
$$\chi(A)=\begin{cases}
	~	\min \{ N\in \mathbb{Z} : \exists h\in  C(A,\mathbb{R}^N\backslash\left\lbrace  0\right\rbrace)  \text{ such that } h(-u,-v)=h(u,v) \},\\
	~ 0 \text{ if } ~~ A=\emptyset,\\
	~+\infty \text{ if such } h \text{ does not exist.}
\end{cases}$$
Then from Huang and Li \cite{Huang}, we deduce that $\chi$  is an index theory satisfying the properties given in Definition \ref{diif}. Moreover, $\chi$ satisfies the one-dimensional property. According to Definition \ref{diiif} , we can obtain a limit index $\chi^\infty$
with respect to $(Z_n)$ from $\chi$.
	
Now we shall verify the conditions of Theorem \ref{thhh}.	
	It is
	 easy to verify that
	  the conditions ($\textbf{\textit{B}}_1$), ($\textbf{\textit{B}}_2$), ($\textbf{\textit{B}}_4$) in Theorem \ref{thhh} are satisfied. Set 
	$$V_j=X_{G_1}^{(j)}=\text{span}\{e_1,e_2,...,e_j\}.$$ 
	Hence $(\textbf{\textit{B}}_3)$ in Theorem \ref{thhh} is also satisfied. In the
	sequel, we 
	shall verify the condition
	 $(\textbf{\textit{B}}_7)$ in Theorem \ref{thhh}. Note that $\text{Fix}G=\left\lbrace 0\right\rbrace $, which implies that $\text{Fix}G\cap V=\left\lbrace (0,0)\right\rbrace $, satisfying condition $(1)$ of $(\textbf{\textit{B}}_7)$. Now, we need to verify the conditions $(2)$ and $(3)$ of ($\textbf{\textit{B}}_7$).
	
	\noindent
	Hereafter, we shall focus our attention on the case when $z=(u,v)\in Z$ satisfies  $\left\| u\right\| _{b,h}\leq1$  and  $\left\| v\right\| _{b,h}\leq1$.\\
	
	$(i)$   Let $(0,v)\in Y_0\cap S_{\rho_m}(0)$ (where $\rho_m$ is yet to be determined). Thus by using assumptions ($\textit{\textbf{F}}_1$) and ($\textit{\textbf{H}}_{a_3}$), we have  	
	\begin{align*}
		E_{\uplambda}(0,v)&=  \widehat{K}\left(
		\mathcal{B}(v)\right)-\int_{\mathbb{R}^N}\frac{1}{r(x)}| v|^{r(x)}dx-\int_{\mathbb{R}^N}\uplambda (x)\mathcal{F}(x,0,v)dx,\\
		&\geq\gamma \mathfrak{K}_0 \int_{\mathbb{R}^N}\Big(\frac{A_1(| v |^{p(x)})}{p(x)} + b(x)\frac{A_2(| v |^{p(x)})}{p(x)}\Big)dx-\dfrac{1}{r^-}\int_{\mathbb{R}^N  }|v|^{r(x)}dx- \sup_{x\in \mathbb{R}^N}\uplambda(x) \int_{\mathbb{R}^N  }\dfrac{f_2(x)}{\ell(x)}|v|^{\ell(x)}dx,\\
		&\geq C\left[\left\| v \right\|_{h ,p}^{p^+}+\mathcal{H}(\kappa_\star)\left\| v \right\|_{h,q}^{q^+}\right] -\dfrac{1}{r^-}\int_{\mathbb{R}^N  }|v|^{r(x)}dx- \uplambda^+ \int_{\mathbb{R}^N  }\dfrac{f_2(x)}{\ell(x)}|v|^{\ell(x)}dx.
	\end{align*}
	Denote
	$$ \delta_{m}=\sup_{v\in X_{G_1}^{m^\perp},\left\| v\right\| _{b,h}\leq1} \int_{\mathbb{R}^N}\dfrac{f_2(x)}{\ell(x)}|v|^{\ell(x)}dx \quad\text{ and }   \tau_m=\sup_{v\in X_{G_1}^{m^\perp},\left\| v\right\| _{b,h}\leq1} \int_{\mathbb{R}^N}|v|^{r(x)}dx,
	$$	
	We invoke here Fan and Han \cite[ Lemma 3.3]{Fan0} to obtain that $
	\delta_m \to 0$, as $m\to \infty$.\par 
	
	Next, we need to verify that $\tau_m\to 0$ as $m\to \infty$. We know that $0 \leq \tau_m + 1 \leq \tau_m$, which implies that $\tau_m\to \tau\geq 0$ as $m\to \infty$. Therefore, there exist $v_m \in X_{G_1}^{m^\perp}$ such that 
	$$ 0\leq \tau_m -\int_{ \mathbb{R}^N }|v_m|^{r(x)}dx < \frac{1}{m},$$	
	for every $m=1,2,\dotsi$. As $X_{G_1}$ is reflexive, we can pass to a subsequence, still denoted by $\left\lbrace v_m\right\rbrace$, such that there exists $v\in X_{G_1}$ satisfying $v_m \rightharpoonup v $ weakly in $X_{G_1}$ as $m\to \infty$.\par 
	
	We claim $v=0$. in fact, for any 
	$ e_k^*\in \left\lbrace e_1^*, e_2^*, \dots, e_m^*,\dots\right\rbrace $, we have $ e_k^*(v_m)=0$ when $ m>k$, which implies that $ e_k^*(v_m)\to0$ as $m\to \infty$. It is immediate that $e_k^*(v)=0$ for any $k\in \mathbb{N}$. Since $(X_{G_1})^*= \overline{\text{span}\left\lbrace e_1^*,e_2^*,\dots,e_k^*,\dots \right\rbrace },$ we can conclude that $v=0$.
	
	\noindent
	By Theorem \ref{ccpp}, there exist a finite measure $\upnu$ and sequences  $\left\lbrace x_i\right\rbrace \subset \mathcal{C}_h$  such that  $|v_m|^{r(x)}\overset{\ast}{\rightharpoonup}\upnu = \sum_{i\in I}\upnu_i\delta_{x_i}$ 
	in $\mathcal{M}_{B}(\mathbb{R}^N)$. where $I$ is a countable set.
	Following a similar discussion as in Lemma \ref{lemma2}, we can conclude that $v_i=v(\left\lbrace x_i\right\rbrace )=0$ for any $i\in I$ where $x_i\neq0$.
	\noindent
	On the other hand, for any $0<t<R$, take 
	$\theta \in C_0^{\infty}(B_{2R}(0))$ such that 
	$0\leq \theta \leq 1$;  $\theta \equiv 1$ in 
	$B_{2R}(0)\backslash B_{2t}(0)$, $\theta \equiv0$ in $B_{t}(0)$. Then 	
	$$ \int_{ \mathbb{R}^N } \left| v_m\right| ^{r(x)}\theta dx \longrightarrow  \int_{ \mathbb{R}^N } \theta d\upnu = \int_{\left\lbrace  x\in \mathbb{R}^N;~~ t \leq \left| x\right|  \leq R\right\rbrace }\theta d\upnu=0,  \text{ as } m\to \infty.$$	
	Since
	$$ \int_{\left\lbrace  x\in \mathbb{R}^N; ~~ 2t\leq \left| x\right|  \leq 2R\right\rbrace } \left| v_m\right|^{r(x)} dx \leq \int_{ \mathbb{R}^N } \left| v_m\right| ^{r(x)}\theta dx,
	$$
	we obtain $\lim_{m\to \infty }\int_{ \mathbb{R}^N } \left| v_m\right| ^{r(x)}dx=0$.
	Therefore $\tau_m\to 0$, as $m\to \infty$. 
	
	\noindent
	Then we have 
	\begin{align*}
		E_{\uplambda}(0,v)
		&\geq C\left[\left\| v \right\|_{b,p}^{p^+}+\mathcal{H}(\kappa_\star^3)\left\| v \right\|_{b,q}^{q^+}\right] -\frac{\tau_m^{r^-}}{r^-}\left\| v \right\|_{b,h}^{r^-} -\uplambda^+ \delta_m^{\ell^-}\left\| v \right\|_{b,h}^{\ell^-}\\
		&\geq c\left\| v \right\|_{b,h}^{q^+} -\tau_m^{r^-}\left\| v \right\|_{b,h}^{r^-} -\uplambda^+ \delta_{m}^{\ell^-}\left\| v \right\|_{b,h}^{\ell^-},\\
		&\geq c\left\| v \right\|_{b,h}^{q^+} -(\tau_m^{r^-}+\uplambda^+ \delta_{m}^{\ell^-})\left\| v \right\|_{b,h}^{r^-}.
	\end{align*}

Let $$\rho_m=\Big(\frac{cq^+}{r^-(\tau_m^{r^-}+\uplambda \delta_m^{\ell^-})}\Big)^{\frac{1}{r^--q^+}}.$$ 
When $(0,v)\in Y_0\cap S_{\rho_m}(0)$ and $\left\| v \right\|_{b,h}=\rho_m$,  for sufficiently large $m$, we have
\begin{align*}
	\sup E_{\uplambda}(0,v)|_{Y_0\cap S_{\rho_m}(0)}&\geq
	\Big(\frac{cq^+}{r^-}\Big)^{\frac{r^-}{r^--q^+}}\Big(\frac{r^--q^+}{q^+}\Big) \Big(\frac{1}{\tau_m^{r^-}+\uplambda^+ \delta_{m}^{\ell^-}}\Big)^{\frac{q^+}{r^--q^+}},
\end{align*}
	where $\tau_m$ and $\delta_{m}\to 0$ as $m\to \infty$, thus we have 	
	\begin{align*}
		\sup E_{\uplambda}(0,v)|_{Y_0\cap S_{\rho_m}(0)}
		&\geq \Big(\frac{cq^+}{r^-}\Big)^{\frac{r^-}{r^--q^+}}\Big(\frac{r^--q^+}{q^+}\Big) \Big(\frac{1}{\tau_m^{r^-}+\uplambda^+ \delta_{m}^{\ell^-}}\Big)^{\frac{q^+}{r^--q^+}} =\mathfrak{M}_m\to \infty \text{ as } m\to \infty,
	\end{align*}
	that is, the condition $(2)$ of ($\textit{\textbf{B}}_7$) holds.\\
	
	$(ii)$ By $(\textit{\textbf{K}}_1)$ and $(\textit{\textbf{K}}_2)$, for any  $u\in X_{G_1}$ we have 
	\begin{align*}
		E_{\uplambda}(u,0)&=  - \widehat{K}\left(
		\mathcal{B}(u)\right) -\int_{\mathbb{R}^N}\frac{1}{r(x)}| u|^{r(x)}dx-\int_{\mathbb{R}^N}\uplambda(x)\mathcal{F}(x,0,v)dx,\\ 
		&\leq 0.
	\end{align*} 
	Hence,  we can choose $\mathfrak{M}$ such that	
	\begin{equation}\label{MMAaa}\mathfrak{M} > \sup_{u\in X_{G_1}}E_{\uplambda}(u,0).
	\end{equation}	
	On the other hand, from $(\textit{\textbf{K}}_2)$, we can obtain for $\xi>\xi_0$
	\begin{equation}\label{MMA}
		\widehat{K}(\xi)\leq \frac{\widehat{K}(\xi_0)}{\xi_0^\frac{1}{\gamma}}\xi^\frac{1}{\gamma}\leq c\xi^\frac{1}{\gamma}.
	\end{equation}	
	About the latter condition and relation \eqref{MMAaa}, for all $(u,v)\in U\oplus Y_1$, we have 	
	\begin{align*}
		E_{\uplambda}(u,v)&= 
		- \widehat{K}\left(
		\mathcal{B}(u)\right) + \widehat{K}\left(
		\mathcal{B}(v)\right) -\displaystyle\int_{\mathbb{R}^N  }\dfrac{1}{r(x)}|u|^{r(x)}dx -\displaystyle\int_{\mathbb{R}^N  }\dfrac{1}{r(x)}|v|^{r(x)}dx -\displaystyle\int_{\mathbb{R}^N}\uplambda(x)\mathcal{F}(x,u,v)dx,\\
		&\leq c\left\| v\right\|_{b,h}^{\frac{q^+}{\gamma}}-c\left|  v \right|_{r(x)}^{r^-}+\mathfrak{M} .
	\end{align*} 	
	Since $\left|  . \right|_{r(x)}$ is also a norm on $Y_1$, and 
	 $Y_1$is a finite-dimensional space, thus $\left\| .\right\|_{b,h}$ and  $\left|  . \right|_{r(x)}$ are equivalent. Then, we get
	\begin{align*}
		E_{\uplambda}(u,v)&\leq c\left\| v \right\|_{b,h}^{\frac{q^+}{\gamma}}-c_{p^*}\left\| v \right\|_{b,h}^{r^-}  +\mathfrak{M}. 
	\end{align*} 
 Given that $\gamma>\frac{q^+}{r^-}$, we have	
	$$\sup E_{\uplambda}|_{U\oplus Y_1} <+\infty.$$
	Therefore, we can choose $k>m$ and $\mathfrak{N}_k >\mathfrak{M}_m$ such that	
	$$E_{\uplambda}| _{U\oplus Y_1} \leq\mathfrak{N}_k,$$
	which satisfies the condition $(3)$ in ($\textit{\textbf{B}}_7$). According to Lemma \ref{lemma2}, $E_{\uplambda}(u,v)$ satisfies the condition of $(PS)_c$ for any $c\in \left[ \mathfrak{M}_m, \mathfrak{N}_k \right] $, thus $(\textbf{\textit{B}}_6)$ in Theorem \ref{thhh} holds. Consequently, based on Theorem \ref{thhh}, we can conclude that
	\[
	c_i=\sup_{\chi^{\infty}(A)\leq i} \sup_{ z=(u,v)\in A}E_{\uplambda}(u,v), \quad  -k+1\leq i\leq -m,
	\]
   represent critical values of $E_{\uplambda}$, where $\mathfrak{M}_m \leq c_{-k+1}\leq ... \leq c_{-m}\leq \mathfrak{N}_k$. As we let $m\to \infty$, we can obtain an unbounded sequence of critical values $c_i$. Due to the even nature of the functional $E$, this results in two critical points $\mp z_i$ of $E_{\uplambda}$ corresponding to each $c_i$.
   \end{proof}

\section*{Acknowledgements}
Repov\v{s} was supported by the Slovenian Research and Innovation Agency grants P1-0292, J1-4031, J1-4001, N1-0278, N1-0114, and N1-0083. The authors acknowledge the referees for their comments and suggestions.

\end{document}